\numberwithin{equation}{section}
\definecolor{linkblue}{rgb}{0,0,.6}
\definecolor{citered}{rgb}{.7,0,0}
\newtheorem{theorem}{Theorem}[section]
\newtheorem{proposition}[theorem]{Proposition}
\newtheorem{corollary}[theorem]{Corollary}
\newtheorem{lemma}[theorem]{Lemma}
\theoremstyle{definition}
\newtheorem{definition}[theorem]{Definition}
\newtheorem{remark}[theorem]{Remark}
\newtheorem{example}[theorem]{Example}
\theoremstyle{plain}
\newcommand{\purge}[1]{} 
\newcommand{\vungoc}{V\~u Ng\d{o}c}
\newcommand{\ra}{{\rho_2}}
\newcommand{\rc}{{\rho_1}}
\newcommand{\tp}{{\tau_1^{\text{pref}}}}
\newcommand{\ssp}{{\sigma_1^{\text{pref}}}}
\newcommand{\Sp}{{S^{\text{pref}}}}
\def\epsilon{\varepsilon}
\def\phi{\varphi}
\newcommand{\al}{\alpha}
\newcommand{\be}{\beta}
\newcommand{\ga}{\gamma}
\newcommand{\ep}{\epsilon}
\newcommand{\ze}{\zeta}
\newcommand{\ka}{\kappa}
\newcommand{\lam}{\lambda}
\newcommand{\om}{\omega}
\newcommand{\Ga}{\Gamma}
\newcommand{\De}{\Delta}
\newcommand{\Lam}{\Lambda}
\def\C{{\mathbb C}}
\def\R{{\mathbb R}}
\def\mbS{{\mathbb S}} 
\def\T{{\mathbb T}}
\def\Z{{\mathbb Z}}
\newcommand{\mcF}{\mathcal F}
\newcommand{\mcG}{\mathcal G}
\newcommand{\mcI}{\mathcal I}
\newcommand{\mcN}{\mathcal N}
\newcommand{\mcO}{\mathcal O}
\newcommand{\mcT}{\mathcal T}
\newcommand{\mcU}{\mathcal U}
\newcommand{\mcV}{\mathcal V}
\newcommand{\mcW}{\mathcal W}
\newcommand{\mcX}{\mathcal X}
\newcommand{\mfI}{\mathfrak I}
\newcommand{\dee}{\mathrm{d}}
\newcommand{\beq}{\begin{equation}}
\newcommand{\eeq}{\end{equation}}
\newcommand{\beqs}{\begin{equation*}}
\newcommand{\eeqs}{\end{equation*}}
\newcommand{\nff}{{n_{\text{FF}}}}
\def\slashii#1{\setbox0=\hbox{$#1$}             
\dimen0=\wd0                                 
\setbox1=\hbox{\sl/} \dimen1=\wd1            
\ifdim\dimen0>\dimen1                        
\rlap{\hbox to \dimen0{\hfil\sl/\hfil}}   
#1                                        
\else                                        
\rlap{\hbox to \dimen1{\hfil$#1$\hfil}}   
\hbox{\sl/}                               
\fi}                                         %
\def\slashiii#1{\setbox0=\hbox{$#1$}#1\hskip-\wd0\hbox to\wd0{\hss\sl/\/\hss}}
\begin{document}

\title[The twisting index in semitoric systems]{The twisting index in semitoric systems}

\author{Jaume Alonso \quad $\&$ \quad Sonja Hohloch \quad $\&$ \quad Joseph Palmer}

\date{\today}

\begin{abstract}

Semitoric integrable systems were symplectically classified by Pelayo and \vungoc\ in 2009-2011 in terms of five invariants.
Four of these invariants were already well-understood prior to the classification, but the fifth invariant, the so-called \emph{twisting index invariant}, came as a surprise.
Intuitively, the twisting index encodes how the structure in a neighborhood of a focus-focus fiber compares to the large-scale structure of the semitoric system and it was originally defined by comparing certain momentum maps.

In the first half of the present paper, we produce several new formulations of the twisting index which give rise to dynamical, geometric, and topological interpretations. More specifically, we describe it in terms of differences of action variables, Taylor series, and homology cycles. In the second half of the paper, we compute the twisting index invariant of a specific family of systems with two focus-focus singular points (the so-called \emph{generalized coupled angular momenta}), which is the first time that the twisting index has been computed for a system with more than one focus-focus point. Moreover, we also compute the terms of the Taylor series invariant up to second order. Since the other invariants of this family were already computed, this becomes the third family of semitoric systems for which all invariants are known, after the {\it coupled spin oscillators} and the {\it coupled angular momenta}.

\end{abstract}

\maketitle

\section{Introduction}

A Hamiltonian dynamical system is known as \emph{integrable} if it admits the maximal
number of independent Poisson commuting quantities conserved by the dynamics.
Integrable systems form an important class of dynamical systems, for instance in classical
mechanics.
Furthermore, they arise often in nature, and though they enjoy certain symmetries, they
can also exhibit very complicated behavior.
Many examples of integrable systems admit a circular symmetry in the form of an $\mbS^1$-action, and such systems will be the focus
of the present paper.

The classification of toric integrable systems due to Delzant~\cite{Delzant1988} 
was extended in dimension four in 2009-2011 by Pelayo \& \vungoc~\cite{PVNinventiones,PVNacta} 
to a more general class of integrable systems called \emph{semitoric}.
While a toric integrable system in dimension four admits an underlying $\mathbb{T}^2$-action,
a semitoric integrable system in dimension four admits an underlying $\mbS^1\times\R$-action.
This allows for semitoric systems to have a certain class of singular points known as \emph{focus-focus singularities}
which cannot appear in toric integrable systems.
The original classification of Pelayo \& \vungoc~\cite{PVNinventiones,PVNacta} applied only to systems satisfying the generic condition that the momentum map of the $\mbS^1$-action contained at most one focus-focus point in each fiber (such systems are called \emph{simple semitoric systems}), but this was extended by Palmer \& Pelayo \& Tang~\cite{PPT} to a classification of all semitoric systems, simple or not. 
The classification of semitoric systems represents a substantial generalization
of the toric classification.
Semitoric systems have been an active area of research in recent years, see for instance the survey papers by Alonso $\&$ Hohloch~\cite{AH}, Pelayo~\cite{Pe}, Henriksen $\&$ Hohloch $\&$ Martynchuk~\cite{HHM}, and Gullentops $\&$ Hohloch~\cite{GH}.

The classification of simple semitoric systems is in terms of five invariants.
Four of these invariants were already conceptually well-understood: the definitions of the \emph{number of focus-focus points invariant} and the \emph{height
invariant} are straightforward, and the \emph{Taylor series} and \emph{semitoric polygon
invariants} had already been defined and geometrically interpreted in this context by \vungoc~\cite{VuNgoc03,VuNgoc07} several years earlier.
Surprisingly, these four invariants were not enough to classify simple semitoric systems.
In order to complete the classification, Pelayo and \vungoc\ had to introduce a fifth
invariant, the so-called \emph{twisting index invariant}.

The twisting index invariant is defined by comparing the fibration induced by the integrable system locally near a focus-focus singular
point to the global structure determined by a choice of the semitoric polygon invariant.
This original definition was stated in terms of comparing momentum maps.
The present paper will give several equivalent descriptions in terms of differences of action variables, Taylor series, or homology cycles, thus providing dynamical-geometrical-topological interpretations of the twisting index invariant.
Having a better understanding of the twisting index invariant of a semitoric system is a necessary step towards extending the symplectic classification to more general situations, such as almost-toric systems, hypersemitoric systems, or higher dimensional systems which admit underlying complexity-one torus actions.

Pelayo \& \vungoc\ formulated the invariants and classified simple semitoric systems, but actually computing these
invariants for explicit examples, and even for relatively elementary systems, turns out to be very involved, for instance see Alonso $\&$ Dullin $\&$ Hohloch~\cite{ADH, ADH2}, Alonso $\&$ Hohloch \cite{AH2}, Dullin~\cite{Du}, and Le Floch $\&$ Pelayo~\cite{LFPecoupledangular}.

In this paper, we do not only provide geometric formulations of the twisting index invariant, but we
compute for the first time the twisting index for an explicit system with two focus-focus points.
The twisting index has never been computed for a system with more than one focus-focus point
before, and we will see that new phenomena appear in this case compared with the case of a single
focus-focus point, which was studied in Alonso \& Dullin \& Hohloch \cite{ADH,ADH2}.
These calculations are of high computational complexity, and involve combining theoretical knowledge
of the twisting index and Taylor series invariants with computation techniques related
to solving elliptic integrals.

In summary, the main contributions of this paper are:
\begin{enumerate}
	\item For simple semitoric systems, we briefly outline the construction of the semitoric invariants
	due to Pelayo and V\~{u} Ng\d{o}c. Motivated by the development of the field during the past decade, we updated a few conventions and details compared to their original papers (see Section~\ref{ss:invariants}).
	\item We introduce dynamical, geometrical, and topological interpretations of the twisting index, which is a necessary prerequisite for any attempts to extend it to any broader class of integrable systems (see Theorem~\ref{thm:geometry}).
	\item We compute for the first time the twisting-index invariant (and on the way we compute several terms of the Taylor series invariant) for a system with two focus-focus points (see Theorems~\ref{thm:twist-intro} and~\ref{thm:Taylor-intro}).
	Note that while we only give terms of the Taylor series invariant up to second order, the computational method that we use can be employed to obtain terms up to arbitrary order, given sufficient computational power and time. 
	\item In order to compute these invariants, we investigate in general how the Taylor series invariant is affected by transformations that change the signs of the components of the momentum map (see Proposition~\ref{prop:symmetry-general}).
\end{enumerate}

The system that we consider in the current paper is actually a one-parameter family of systems, which is semitoric for all but a finite number of values of the parameter. 
In fact, since the symplectic manifold and underlying $\mbS^1$-action are independent of the parameter, it is an example of a \emph{semitoric family}, as studied by Le Floch \& Palmer~\cite{LFP-fam1,LFP-fam2}.

The underlying $\mbS^1$-action is an important feature of semitoric systems. 
Recall that a group action is called \emph{effective} if the identity is the only element that fixes the entire space.
A four dimensional symplectic manifold equipped with an effective Hamiltonian $\mbS^1$-action is called an $\mbS^1$-space, and such spaces were classified by Karshon~\cite{karshon} in terms of a labeled graph.
The relationship between the classification of $\mbS^1$-spaces and the classification of semitoric systems was studied by Hohloch \& Sabatini \& Sepe~\cite{HSS}, and in particular the twisting index is one of the invariants of semitoric systems which is not encoded in the invariants of the underlying $\mbS^1$-space; it is a purely semitoric invariant. The problem of lifting Hamiltonian $\mbS^1$-actions to integrable systems has also been considered by Hohloch \& Palmer~\cite{Hohloch2022}.
The problem of recovering the semitoric invariants from the joint spectrum of the corresponding quantum integrable system has also been studied, such as by Le Floch \& Pelayo \& V\~{u} Ng\d{o}c \cite{LFPeVN2016}, and in particular a technique for recovering the twisting index was first discovered by Le Floch \& V\~{u} Ng\d{o}c~\cite{LFVN21}.

\subsection{Main results}

The twisting index can be encoded as one integer for
each focus-focus point assigned to each representative of the semitoric
polygon invariant.
As mentioned above, it encodes the way the semilocal normal form around the focus-focus fiber lies with respect to the global integral affine structure.
The original definition was in terms of a local preferred momentum map, and in the present paper we will show that there are several equivalent formulations.

Let $m$ be a focus-focus point.
For the following theorem we use the notation:
\begin{itemize}
 \item $\kappa^\Delta$ denotes the twisting index of $m$ relative to the semitoric polygon $(\De,b,\varepsilon)$ (for the definition of $(\De,b,\varepsilon)$, see Section~\ref{sss:polygon});
 \item $\mu_\Delta$ denotes the generalized momentum map associated to the polygon $(\De,b,\varepsilon)$ (for the definition, see Section~\ref{sss:polygon});
 \item $\nu$ is the preferred momentum map near $m$, as in Pelayo \& V\~{u} Ng\d{o}c~\cite{PVNinventiones} (see Section~\ref{sss:twisting});
 \item $I_\Delta$ is the action associated to $(\Delta,b,\varepsilon)$ (see Section~\ref{sec:geometricinterpret});
 \item $\xi$ is the preferred action near $m$ (see Section~\ref{sss:twisting});
 \item $(S^\Delta)^\infty$ and $(S^\text{pref})^\infty$ are the Taylor series associated to $\Delta$ and the preferred Taylor series respectively, both are Taylor series in the variables $l,j$ (see Section~\ref{sec:geometricinterpret});
 \item $\gamma_L^z,\gamma_\Delta^z,\gamma_\text{pref}^z\in H_1(\Lambda_z)$ are cycles on a fiber $\Lambda_z$ near $m$ (see Section~\ref{sec:geometricinterpret});
 \item $T$ is the matrix given in Equation~\eqref{eqn:T}.
\end{itemize}
Denote by $\mathrm{Re}(z)$ the real part of a complex number $z$.
Combining known results with results established in this paper, we obtain the following equivalent ways to characterize the twisting index:

\begin{theorem}\label{thm:geometry}
  The twisting index can equivalently be described in the following ways:
 \begin{enumerate}
  \item \label{itemthm:original} The original definition, which is a difference of momentum maps: $T^{\kappa^\Delta} \mu_\Delta = \nu$; 
  \item \label{itemthm:actions} A difference of actions: $\kappa^\Delta \mathrm{Re}(z)=I^\Delta(z)-\xi(z)$;
  \item \label{itemthm:series} A difference of Taylor series: $\kappa^\Delta 2\pi l = (S^\Delta)^\infty - (S^\text{pref})^\infty$;
  \item \label{itemthm:cycles} A difference of cycles: $\kappa^\Delta \gamma_L^z = \gamma_\Delta^z - \gamma_\text{pref}^z$ as elements of $H_1(\Lambda_z)$
 \end{enumerate}
\end{theorem}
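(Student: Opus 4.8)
The plan is to establish the four characterizations by treating item~\eqref{itemthm:original} as the definition and then showing \eqref{itemthm:original}$\Leftrightarrow$\eqref{itemthm:actions}, \eqref{itemthm:actions}$\Leftrightarrow$\eqref{itemthm:series}, and \eqref{itemthm:actions}$\Leftrightarrow$\eqref{itemthm:cycles}, since the action variables are the common thread linking the momentum-map picture, the Taylor-series picture, and the homology picture. The key observation throughout is that the matrix $T$ of Equation~\eqref{eqn:T} acts by the shear $(l,j)\mapsto(l,j+l)$ on the relevant local integral affine charts; hence applying $T^{\kappa^\Delta}$ to the $\mbS^1\times\R$-valued momentum map $\mu_\Delta=(L,\, \cdot\,)$ changes only the second component, and does so by adding $\kappa^\Delta$ times the first component $L$. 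Since the first component is common to all the momentum maps under consideration (it is the fixed $\mbS^1$-momentum map, whose Hamiltonian flow is $2\pi$-periodic), the entire content of the twisting index is the comparison of the \emph{second} components, i.e.\ of the non-periodic action coordinates.

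First I would record the precise local dictionaries near $m$: the semilocal normal form identifies a neighborhood of the focus-focus fiber with a model in which $L$ is one coordinate and the complementary action is computed by an action integral $\oint p\,dq$ over the appropriate vanishing cycle; the preferred momentum map $\nu$ and preferred action $\xi$ are, by the construction recalled in Section~\ref{sss:twisting}, exactly the ones for which this complementary action extends smoothly across the focus-focus value with the normalization fixed by V\~u Ng\d{o}c's theorem. Writing $\mu_\Delta=(L, I^\Delta)$ and $\nu=(L,\xi)$ (up to the global affine conventions), the identity $T^{\kappa^\Delta}\mu_\Delta=\nu$ becomes, in second components, $I^\Delta + \kappa^\Delta L = \xi$ modulo the additive constants absorbed into the definitions; identifying $L$ with $\mathrm{Re}(z)$ near $m$ (the standard complexification of the base) turns this into $\kappa^\Delta\,\mathrm{Re}(z) = I^\Delta(z)-\xi(z)$, which is \eqref{itemthm:actions}. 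For \eqref{itemthm:series}, I would substitute the asymptotic expansions: both $I^\Delta$ and $\xi$ have expansions whose singular parts agree (this is what makes their difference smooth), so $(S^\Delta)^\infty-(S^\text{pref})^\infty$ is the Taylor series of a smooth function, and by the chain rule relating the base variable $z$ to the Taylor-series variables $(l,j)$ — in which $2\pi l$ is the image of $\mathrm{Re}(z)$ under the relevant linearization — equation~\eqref{itemthm:actions} passes to $\kappa^\Delta\, 2\pi l = (S^\Delta)^\infty-(S^\text{pref})^\infty$. For \eqref{itemthm:cycles}, I would use that an action variable is, by definition, the integral of the Liouville one-form over a cycle, so a difference of actions that equals $\kappa^\Delta$ times the $\mbS^1$-action is realized, fiberwise over $\Lambda_z$, by a difference of cycles equal to $\kappa^\Delta$ times the orbit cycle $\gamma_L^z$; conversely the homological identity integrates back to \eqref{itemthm:actions}. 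The compatibility of orientations and base points of $\gamma_\Delta^z$, $\gamma_\text{pref}^z$ along the Lagrangian fibration near the focus-focus fiber is where I would have to be careful.

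The main obstacle I expect is bookkeeping of the several affine normalizations: the semitoric polygon $(\Delta,b,\varepsilon)$ carries choices (the cut direction $\varepsilon$, the basepoint $b$, the $GL(2,\Z)$-ambiguity) that must be matched against the normalization conventions for $\nu$ and $\xi$ so that the additive and multiplicative constants genuinely cancel and one is left with exactly $\kappa^\Delta$ and not $\kappa^\Delta$ plus some spurious integer. Concretely, the delicate point is verifying that the \emph{same} integer $\kappa^\Delta$ appears in all four formulas — i.e.\ that the shear count is invariant under the identifications — rather than merely proving that each pair of formulas agrees up to a constant depending on the chart. Once the normalizations are pinned down (using Section~\ref{ss:invariants} and Section~\ref{sec:geometricinterpret}), each implication is a short computation; the geometric content is entirely in the observation that $T$ is a shear fixing the periodic coordinate, so the twisting index is the unique integer measuring the shear between the global affine action $I^\Delta$ and the semilocal preferred action $\xi$.
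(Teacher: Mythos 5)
Your route is the same as the paper's: take item~\eqref{itemthm:original} as the definition, get item~\eqref{itemthm:actions} by comparing second components using that $T$ is a shear fixing $L$ (this is exactly Lemma~\ref{lem:twist-action}), get item~\eqref{itemthm:series} by regularizing the two actions and passing to Taylor series (Proposition~\ref{prop:difference-Taylor}), and get item~\eqref{itemthm:cycles} by integrating a primitive of $\om$ over cycles (Proposition~\ref{prop:cycles}). However, the two steps that carry the actual content are left as assertions. First, for item~\eqref{itemthm:series} it is not enough to say the normalizations ``can be pinned down'': you must prove that the representative $(S^{\text{pref}})^\infty$, singled out by the condition that the coefficient of $l$ lies in $[-\tfrac{\pi}{2},\tfrac{3\pi}{2}[$, coincides with the Taylor series of the regularization of the preferred action $\xi$. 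This is Lemma~\ref{lem:Spref}, proved by computing $\partial_l \hat{S}(z)=\tp(z)-\mathrm{Im}(\log z)$ and invoking the normalization \eqref{eqn:pref-tau1} of the lift $\tp$. Without it, your equivalence of \eqref{itemthm:actions} and \eqref{itemthm:series} only holds up to an integer multiple of $2\pi l$, which is precisely the ``spurious integer'' you mention but do not rule out.

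Second, for item~\eqref{itemthm:cycles} you treat $\xi$ as ``by definition'' an integral of the Liouville form, over ``the appropriate vanishing cycle''. Neither is correct as stated: $\xi$ is defined dynamically via the privileged vector field \eqref{eqHamvf} (through $\tp$ and $\tau_2$, see Remark~\ref{rmk:Xi-and-xi}), not as a cycle integral, and the vanishing cycle is the imaginary cycle $\alpha$, which computes the imaginary action $J$, not $\xi$. The missing step is Equation~\eqref{eqn:gamma-pref-Xi}: one shows by an explicit homotopy (interpolating between the concatenated flows and the flow of $\Xi$) that $\gamma_{\text{pref}}^z$ is homotopic to an orbit cycle of $\Xi$, and only then does Remark~\ref{rmk:integrate-cycles} give $\tfrac{1}{2\pi}\oint_{\gamma_{\text{pref}}^z}\varpi=\xi(z)$; combined with the observation that $\gamma_\Delta^z-\gamma_{\text{pref}}^z=K\gamma_L^z$ for some $K\in\Z$ (both cycles complete $\gamma_L^z$ to a basis of $H_1(\Lambda_z)$, with matching orientation conventions), Lemma~\ref{lem:twist-action} then identifies $K=\kappa^\Delta$. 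Finally, watch the sign: the convention consistent with Equation~\eqref{eqn:twist-def} is $\mu_\Delta=T^{\kappa^\Delta}\circ\nu$, which yields $I_\Delta-\xi=\kappa^\Delta l$ as in item~\eqref{itemthm:actions}; your displayed relation $I^\Delta+\kappa^\Delta L=\xi$ has the opposite sign (it follows the literal wording of item~\eqref{itemthm:original}, which swaps $\mu_\Delta$ and $\nu$ relative to \eqref{eqn:twist-def}).
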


In each case, we obtain $\kappa^\Delta$ by comparing an object determined by the choice of polygon with a local preferred object described in a neighborhood of the fiber containing the focus-focus point $m$.
Item~\eqref{itemthm:original} is the original definition of Pelayo \& V\~{u} Ng\d{o}c~\cite{PVNinventiones}, and the other items are all proved in Section~\ref{sec:geometricinterpret}. Item~\eqref{itemthm:actions}, which follows quickly from the original definition, has already appeared in several places in the literature, and is stated as Lemma~\ref{lem:twist-action}.
Item \eqref{itemthm:series} is discussed in Proposition~\ref{prop:difference-Taylor}, and item~\eqref{itemthm:cycles} is given in Proposition~\ref{prop:cycles}.
The way in which the twisting index is linked to the Taylor series in item~\eqref{itemthm:series} above is analogous to the treatment in Palmer $\&$ Pelayo $\&$ Tang~\cite{PPT}, Alonso~\cite{Jaume-thesis}, and in particular Le Floch $\&$ \vungoc~\cite[Proposition 2.14]{LFVN21}.

Now we introduce the family of systems for which we will compute the twisting index.
This family is a one-parameter subfamily of the family of systems (sometimes called {\it generalized coupled angular momenta}) studied in Hohloch \& Palmer \cite{HoPa2018} and Alonso \& Hohloch \cite{AH2}. We chose this subfamily because it has a relatively simple dependence on the parameter while still having two focus-focus points during a certain parameter interval.
Let $\mbS^2$ be equipped with cartesian coordinates $(x,y,z)$ induced from the inclusion $\mbS^2\subset\R^3$ as the unit sphere
and symplectic form $\om_{\mbS^2} = x \dee y\wedge \dee z + y \dee z\wedge \dee x +z \dee x\wedge \dee y$.
Let $(M,\om)$ be the symplectic manifold given by $M:=\mbS^2 \times \mbS^2$ with coordinates $(x_1,y_1,z_1,x_2,y_2,z_2)$
and symplectic form $\om := - (\om_{\mbS^2} \oplus 2  \om_{\mbS^2})$.
Then we define, for $s\in[0,1]$, the parameter-dependent family of systems $(M,\om,F_s)$, with $F_s:=(L,H_s):M \to \R^2$ given by
\begin{equation}\label{eqn_ssys}
 \left\{
  \begin{aligned}
   L(x_1, y_1, z_1 , x_2, y_2, z_2)& := z_1 + 2 z_2 , \\ 
   H_s(x_1, y_1, z_1 , x_2, y_2, z_2)  & :=   (1 - s) z_1 + s z_2 +2 (1 - s) s (x_1 x_2 + y_1 y_2).
  \end{aligned}
 \right.
\end{equation} 
The Hamiltonian flow of the function $L$ corresponds to a simultaneous rotation on both spheres about their respective vertical axes. The Hamiltonian flow of $H_s$ corresponds to an interpolation between rotations on each sphere and the scalar product of the horizontal projection of both spheres. 
Note that we use this symplectic form, in which the second factor is scaled by two, so that the focus-focus values occur at different values of $J$ (i.e.~so that this integrable
system satisfies the condition of \emph{simplicity}).

\begin{remark}
Note that there are two main conventions of notations in the literature around semitoric systems: In this paper, we use the notation $F = (L,H)$ since $L$ is an angular momentum and, later on, we use $J$ to denote the imaginary action (see Section~\ref{sec:prep-for-TS}). 
The other convention uses $J$ instead as the first component of the momentum map,  to obtain $F=(J,H)$.
\end{remark}

Let $\mathcal{N}=(0,0,1)$ and $\mathcal{S}=(0,0,-1)$ denote the poles of the 2-sphere $\mbS^2$.
Let $s_-,s_+\in [0,1]$ be as in Proposition~\ref{prop:nff}.
The result of that proposition implies that 
if $s\notin \,\,]s_-,s_+[\,$ then the system has no focus-focus points,
and if $s\in \,\,]s_-,s_+[\,$, then the system has exactly two focus-focus points, given by
$m_1:=\mathcal{N}\times\mathcal{S}$ and $m_2:=\mathcal{S}\times\mathcal{N}$.

The twisting index consists of an integer (``the integer label'') assigned to each focus-focus point on each semitoric polygon representative.
Given such labels on a single representative, the group action given in Equation~\eqref{eq:twisact} can be used to determine the labels on all representatives.
That is, to specify the twisting index of our system we just have to state the
integer labels on a single choice of semitoric polygon representative, which is precisely what we do in the following theorem.

\begin{theorem}\label{thm:twist-intro}
Let $(M,\om, F_s = (L,H_s))$ be as in Equation~\eqref{eqn_ssys} with $s\in \,\,]s_-,s_+[\,$. Then the twisting index invariant of $(M,\om, F_s)$ is independent of the choice of $s$. It is the unique one which assigns the tuple of indices $(\kappa_1^s,\kappa_2^s)=(-1,-2)$ to the
representative of the semitoric polygon invariant with upwards cuts which is the convex hull of $(-3,2)$, $(-1,2)$, $(1,0)$, and $(3,-4)$, which is displayed as the upper left subfigure in Figure \ref{fig:polygons-with-kappa-2}.
\end{theorem}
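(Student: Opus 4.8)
The plan is to compute the twisting index by working directly from the characterization in Theorem~\ref{thm:geometry}\eqref{itemthm:series}: the integer label $\kappa_i^s$ at the focus-focus point $m_i$, relative to a chosen polygon representative $\Delta$, is read off from the difference $(S^\Delta)^\infty - (S^{\text{pref}})^\infty = \kappa_i^s\, 2\pi l$. Concretely, I would first fix the distinguished representative of the semitoric polygon described in the statement — the convex hull of $(-3,2), (-1,2), (1,0), (3,-4)$ with upward cuts — and verify, using the already-computed polygon invariant of this family (from Alonso \& Hohloch~\cite{AH2}) together with the $\mathbb{Z}^2\rtimes(\mathbb{Z}/2)$-action on polygons, that it is indeed a valid representative; in particular that the two cut lines pass through the images of $m_1$ and $m_2$ at the appropriate interior vertices. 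Since Proposition~\ref{prop:nff} locates the focus-focus points at $m_1=\mathcal N\times\mathcal S$, $m_2=\mathcal S\times\mathcal N$ with distinct $L$-values (by the choice of the factor $2$ in $\omega$), the labels are well-defined integers, one per point.

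Next I would compute, for each focus-focus point $m_i$, the preferred Taylor series $(S^{\text{pref}})^\infty$ and the polygon-associated Taylor series $(S^\Delta)^\infty$. The preferred series is obtained by the standard normalization near the focus-focus fiber: set up the local model, express the regularized action $\xi$ via an elliptic integral (the period of the relevant cycle on $\Lambda_z$), and extract its Taylor expansion in the variables $(l,j)$; this is exactly the computation carried out for the Taylor series invariant, so I would reuse the machinery that produces Theorem~\ref{thm:Taylor-intro}, needing here only the linear part in $l$. The series $(S^\Delta)^\infty$ is the action $I_\Delta$ associated to the polygon, expanded near the node; its linear-in-$l$ part is determined by the slopes of the edges of $\Delta$ adjacent to the vertex corresponding to $m_i$, which are explicit integers from the chosen representative. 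Taking the difference and comparing the coefficient of $2\pi l$ then yields $\kappa_i^s$ as an integer. I would do this for both $m_1$ and $m_2$ and check that the answers are $-1$ and $-2$ respectively.

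The independence of $s\in\,]s_-,s_+[\,$ I would argue by continuity plus integrality: the twisting index labels, being integers, cannot jump as $s$ varies continuously within the open interval on which the system remains semitoric with the same combinatorial polygon data, and the relevant quantities (edge slopes of the fixed representative, and the regularized actions $\xi$) vary continuously in $s$; hence $(\kappa_1^s,\kappa_2^s)$ is locally constant, and since $]s_-,s_+[\,$ is connected it is globally constant. Here one must be slightly careful that no \emph{hidden} focus-focus bifurcation or change of the semitoric polygon occurs inside the interval, which is precisely what Proposition~\ref{prop:nff} rules out.

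The main obstacle will be the explicit computation of the preferred action $\xi$ near each focus-focus point to the order needed: this requires identifying the correct vanishing cycle $\gamma_{\text{pref}}^z$ on the nearby torus, setting up the period integral for the system~\eqref{eqn_ssys}, and recognizing it as an elliptic integral whose small-parameter expansion can be computed in closed form — together with tracking all the normalization conventions (the factor $2\pi$, the choice of orientation of cycles, and the sign conventions flagged in the Remark, $F=(L,H)$ versus $F=(J,H)$, as well as the sign-change behavior of the Taylor series from Proposition~\ref{prop:symmetry-general}) so that the integer that emerges is genuinely $\kappa_i^s$ and not off by a sign or a shift. A secondary subtlety is the bookkeeping of the $\mathbb{Z}^2\rtimes(\mathbb{Z}/2)$-action in Equation~\eqref{eq:twisact} needed to confirm that our computed labels on \emph{our} preferred polygon match the specific representative named in the statement rather than some other representative in its orbit.
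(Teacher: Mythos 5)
Your overall framework (reading off $\kappa_i$ from the characterization $(S^\Delta)^\infty-(S^{\text{pref}})^\infty=\kappa_i\,2\pi l$ of Theorem~\ref{thm:geometry}\eqref{itemthm:series}) is legitimate, but the step where you actually produce $(S^\Delta)^\infty$ is where the proposal breaks down. You claim its linear-in-$l$ part is ``determined by the slopes of the edges of $\Delta$ adjacent to the vertex corresponding to $m_i$.'' This cannot work: the image of $m_i$ under $\mu_\Delta$ is not a vertex of $\Delta$ but an interior point on the cut line (at height $h_i$), and, more fundamentally, the twisting index is an invariant of the classification \emph{independent} of the polygon, heights, and Taylor series, so no combinatorial data of the chosen representative (edge slopes, fake/hidden corner structure, etc.) can determine the coefficient you need. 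The series $(S^\Delta)^\infty$ is built from the global action $I_\Delta$ with $I_\Delta\circ\Phi=\mu_2^\Delta$ (Equations~\eqref{eqn:I-Delta}--\eqref{eqn:S-Delta}), and computing it requires comparing the local cycle/action near $m_i$ with the globally defined toric momentum map — a genuinely global computation. This is exactly the hard step, and it is what the paper does differently: it extends the privileged momentum maps $\nu_r=(L,\Xi_r)$ over large regions of $M$ via $\Xi_r=\xi_r(L,J_r(L,H))$ (using the Taylor expansions of Theorem~\ref{thm:Taylor-intro}, the imaginary action of Remark~\ref{rmk:varrho_is_J}, and Equation~\eqref{eqn:proof-xi}), plots their images, and identifies which representative (all of which differ by powers of $T$ and hence have drastically different shapes) satisfies $\mu_\Delta=\nu_r$, giving $(\kappa_1,\kappa_2)=(0,0)$ for the downward-cut polygon of Figure~\ref{fig:twis-cuts-down}; the label $(-1,-2)$ on the stated upward-cut representative then follows from the group action~\eqref{eq:twisact}. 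Your proposal contains no substitute for this global-to-local comparison.

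The $s$-independence argument is also too quick. ``Integer-valued, hence locally constant under continuous variation'' ignores the actual mechanism by which $\kappa_r^s$ could jump: the privileged momentum map $\nu_r$ is pinned down by the normalization $\partial_l S^{\text{pref}}(0)\in[-\tfrac{\pi}{2},\tfrac{3\pi}{2}[$ (Equation~\eqref{eqn:pref-tau1}), and if this normalized coefficient wrapped around the end of the fundamental domain as $s$ varies, $\nu_r$ would jump by an application of $T^{\pm1}$ and $\kappa_r^s$ would change by $\mp 1$ even though everything underlying varies continuously — indeed the paper remarks after Theorem~\ref{thm:twist-intro} that it knows of no general obstruction to the twisting index changing in a family. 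The paper's proof closes this gap concretely by checking that $\arctan\bigl(\tfrac{6-9s}{\rho_2}\bigr)$ (and the same plus $\pi$ for $m_2$) remains inside $\,]-\tfrac{\pi}{2},\tfrac{3\pi}{2}[\,$ for all $s\in\,]s_-,s_+[\,$; your argument would need exactly this verification, which in turn requires the explicit first-order Taylor coefficients.
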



\begin{figure}[ht]
\centering
    \begin{subfigure}[b]{0.35\textwidth}
    	\centering
        \includegraphics[width=0.8\textwidth]{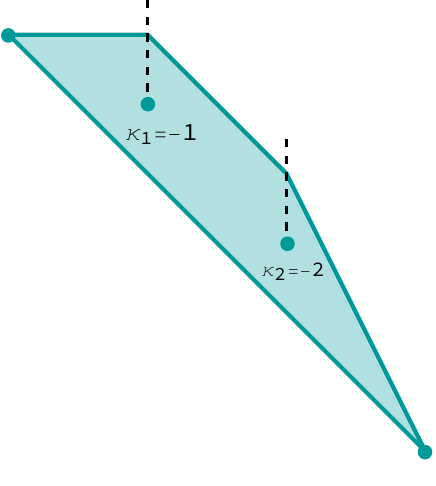}
        \caption*{\small{
        $(\kappa_1^s,\kappa_2^s)=(-1,-2)$
        }}
    \end{subfigure}
    \begin{subfigure}[b]{0.35\textwidth}
    	\centering
        \includegraphics[width=0.8\textwidth]{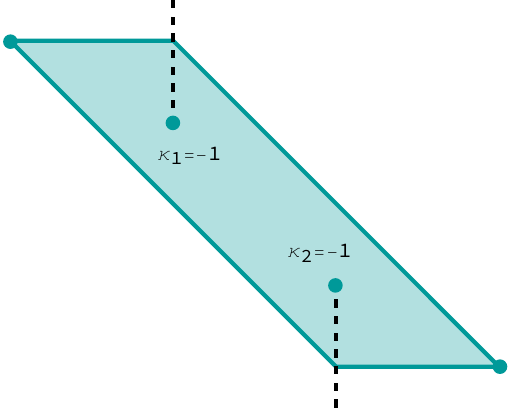}
        \caption*{\small{
        $(\kappa_1^s,\kappa_2^s)=(-1,-1)$
        }}
    \end{subfigure}\\[10pt]
    \begin{subfigure}[b]{0.35\textwidth}
    	\centering
        \includegraphics[width=0.8\textwidth]{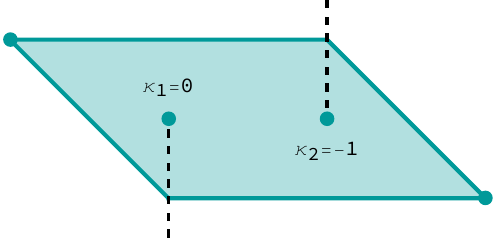}
        \caption*{\small{
        $(\kappa_1^s,\kappa_2^s)=(0,-1)$
        }}
    \end{subfigure}
    \begin{subfigure}[b]{0.35\textwidth}
    	\centering
        \includegraphics[width=0.8\textwidth]{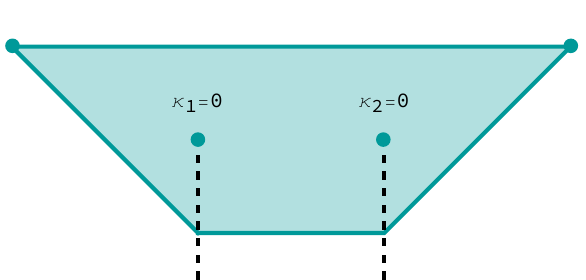}
        \caption*{\small{
        $(\kappa_1^s,\kappa_2^s)=(0,0)$
        }}
    \end{subfigure}
	\caption{
	Four representatives of the polygon invariant of the system with twisting indices labeled. To prove Theorem~\ref{thm:twist-intro}, we computed that the twisting index labels for the lower right polygon are $(\kappa_1^s,\kappa_2^s)=(0,0)$, and the others are determined by Equation \eqref{eq:twisact}.
	}
    \label{fig:polygons-with-kappa-2}
\end{figure}

Theorem~\ref{thm:twist-intro} is proved in Section~\ref{sec:twist-proof}.
Note that the twisting index is independent of the parameter $s$ for this particular system (which is very symmetric), but this may not be true for other one-parameter families of systems.
More precisely, we are not aware of any obstruction to the twisting index changing in such a family.

This paper is the first time, to our knowledge, that anyone has attempted to explicitly
compute the twisting index of a system with more than one focus-focus singular point.
In the process of performing these calculations, we noticed a small oversight in the original formula about how the twisting index changes between different polygons:
there is an extra term which was missing from the original formulation in Pelayo \& V\~{u} Ng\d{o}c~\cite{PVNinventiones}.
This term is automatically zero if the system has strictly less than two focus-focus singular points, which has been the case for all systems whose twisting index has been treated so far in the literature. We include a corrected formula in Equation~\eqref{eq:twisact} and discuss this in Remark~\ref{rmk:twist-changes}.

In order to compute the twisting index in Theorem~\ref{thm:twist-intro}, we also needed to obtain the lower order terms of the Taylor series invariant at each focus-focus point of the system.

\begin{theorem}\label{thm:Taylor-intro}
Let $(M,\om, F_s = (L,H_s))$ be as in Equation~\eqref{eqn_ssys} with $s\in \,\,]s_-,s_+[\,$.
Let $S_{i,s}^\infty(l,j)$ denote the Taylor series invariant at the focus-focus point $m_i$, for $i\in\{1,2\}$.
Then
\begin{equation*}
\begin{aligned}
S_{1,s}^\infty(l,j) &= l \arctan \left( \dfrac{6-9s}{\ra} \right)  + j \log \left( \dfrac{\ra^3}{\sqrt{2}\rc (1-s)^2 {s}^2}  \right) 
\\&+ \dfrac{l^2}{8 \ra^3 \rc^2} \left( 3(2-3s)(16 - 96 {s} - 216 {s}^2 + 1944 {s}^3  - 3211 {s}^4 + 424 {s}^5 \right. \\ & \qquad \left. + 
 3252 {s}^6 - 2816 {s}^7 + 704 {s}^8) \right)  \\&+ \dfrac{lj}{16 \ra^3 \rc^2} \left( \ra (16 - 96 {s} + 360 {s}^2 - 936 {s}^3 + 2693 {s}^4 - 6200 {s}^5 + 
 8004 {s}^6 \right. \\ & \qquad \left.- 5120 {s}^7 + 1280 {s}^8)\right) 
\\&+ \dfrac{j^2}{8 \ra^3 \rc^2} \left( (-96 + 720 {s}   - 7248 {s}^2 + 36312 {s}^3 - 99558 {s}^4  + 174957 {s}^5 \right. \\&\qquad  \left. - 
 211536 {s}^6 + 171924 {s}^7 - 83328 {s}^8 + 17856 {s}^9) \right)  + \mcO(3),
\end{aligned}
\end{equation*}
where $\mcO(3)$ denotes terms of order greater than or equal to three in the variables $l$ and $j$, and $\rho_1,\rho_2$ are the functions of $s$ given in Equation~\eqref{eqn:rho}.
Furthermore, due to the symmetries of the system, 
\begin{equation}\label{eqn:symmetry-intro}
S_{2,s}^\infty(l,j) = -S_{1,s}^\infty(-l,-j)+\pi l.
\end{equation} 
\end{theorem}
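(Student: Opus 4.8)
The plan is to prove the two assertions separately: compute $S_{1,s}^\infty$ directly by exploiting that $L$ generates a global $\mbS^1$-action, and then deduce the formula for $S_{2,s}^\infty$ from that of $S_{1,s}^\infty$ by means of a symmetry of the system together with Proposition~\ref{prop:symmetry-general}.

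\emph{Computing $S_{1,s}^\infty$.} First, perform symplectic reduction of $(M,\om)$ by the $\mbS^1$-action generated by $L$. For $\ell$ near (but not equal to) $L(m_1)=-1$, the reduced space $M_\ell:=L^{-1}(\ell)/\mbS^1$ is a smooth $2$-sphere, and $H_s$ descends to a reduced Hamiltonian which, in reduced coordinates $(z_1,\psi)$ where $\psi=\theta_1-\theta_2$ is the difference of the rotation angles of the two spheres and $z_2=(\ell-z_1)/2$, reads
\[
 \bar{H}_{s,\ell}(z_1,\psi)=(1-s)z_1+s\,\tfrac{\ell-z_1}{2}+2(1-s)s\,\sqrt{(1-z_1^2)\bigl(1-\bigl(\tfrac{\ell-z_1}{2}\bigr)^2\bigr)}\,\cos\psi.
\]
The focus-focus fiber over $m_1$ corresponds to a nodal degeneration of the level curve $\{\bar{H}_{s,\ell}=h\}$ at $(\ell,h)=(-1,\,1-2s)$. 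Next, express the relevant action integrals near this value — the action over the $\mbS^1$-orbit cycle (which is simply $\ell$ up to normalization), the action over a transversal cycle, and the imaginary action $j$ — as complete elliptic integrals over $\{\bar{H}_{s,\ell}=h\}$; the transversal one develops the expected logarithmic singularity at $(-1,\,1-2s)$. Then identify the invariant normal coordinates $(l,j)$ near the focus-focus value: $l$ is the affine function of $\ell$ fixed by the orientation conventions of the semitoric structure, and $j$ is the imaginary action just described. By V\~u Ng\d{o}c's characterization (as in \cite{VuNgoc03} and the treatments of Alonso \& Dullin \& Hohloch~\cite{ADH,ADH2} and Le Floch \& \vungoc~\cite{LFVN21}), $S_{1,s}^\infty$ is the Taylor expansion at the origin of the transversal action, with its universal logarithmic part removed, re-expressed in the variables $(l,j)$. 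The linear part $l\arctan\bigl(\tfrac{6-9s}{\ra}\bigr)+j\log\bigl(\tfrac{\ra^3}{\sqrt2\,\rc(1-s)^2s^2}\bigr)$ is read off in closed form from the linearization of $F_s$ at $m_1$ (the coefficient of $l$ being an argument, that of $j$ a logarithm of a modulus combined with the normal-form desingularization constant), with $6-9s=3(2-3s)$ and the radicals $\rc,\ra$ of Equation~\eqref{eqn:rho} carrying the remaining data. The quadratic coefficients are obtained by expanding the elliptic integrals one order further in $(\ell+1,\,h-(1-2s))$, composing with the change of variables to $(l,j)$, and simplifying using the explicit forms of $\rc$ and $\ra$.

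\emph{The symmetry relation~\eqref{eqn:symmetry-intro}.} Consider the symplectomorphism $\Phi=\Phi_1\times\Phi_2$ of $M$, where $\Phi_1(x,y,z)=(-x,y,-z)$ and $\Phi_2(x,y,z)=(x,-y,-z)$ are rotations by $\pi$ of the two spheres; each preserves $\om_{\mbS^2}$, so $\Phi^*\om=\om$. A direct computation gives $\Phi^*L=-L$, $\Phi^*H_s=-H_s$, and $\Phi(m_1)=\mathcal N\times\mathcal S\longmapsto\mathcal S\times\mathcal N=m_2$ (compatibly with $L(m_1)=-L(m_2)$ and $H_s(m_1)=-H_s(m_2)$). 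Hence $\Phi$ is an isomorphism of integrable systems from $(M,\om,(-L,-H_s))$ onto $(M,\om,(L,H_s))$ carrying $m_1$ to $m_2$ and intertwining the $\mbS^1$-actions together with their orientations; in particular the Taylor series invariant of $(M,\om,(L,H_s))$ at $m_2$, which is $S_{2,s}^\infty$, equals the Taylor series invariant of $(M,\om,(-L,-H_s))$ at $m_1$. Applying Proposition~\ref{prop:symmetry-general}, which records how the Taylor series invariant transforms under negation of both components of the momentum map, then expresses the latter in terms of $S_{1,s}^\infty$ and yields $S_{2,s}^\infty(l,j)=-S_{1,s}^\infty(-l,-j)+\pi l$.

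\emph{Expected main obstacle.} The crux is the quadratic part of the first computation: the asymptotics of the complete elliptic integrals near the nodal degeneration are delicate, and the change of variables from the parameters $(\ell,h)$ to the invariant coordinates $(l,j)$ is itself transcendental (a period integral), so extracting honest Taylor coefficients in $(l,j)$ requires carrying along its nonlinear terms and simplifying the resulting combinations of radicals $\rc,\ra$ (this is where ``solving elliptic integrals'' enters). A further, pervasive, source of care is the bookkeeping of sign and orientation conventions — equivalently, the normalizations built into the ``preferred'' objects — which is precisely what Proposition~\ref{prop:symmetry-general} is designed to handle, both in the symmetry argument above and in reconciling the direct computation of $S_{1,s}^\infty$ with those conventions.
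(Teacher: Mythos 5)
Your route is essentially the paper's: perform (singular) reduction by the $\mbS^1$-action generated by $L$, write the relevant actions as complete elliptic integrals over the reduced level curves, take the imaginary action along the vanishing cycle as the second normal-form coordinate $j$ (so that $(l,h)\mapsto(l,j)$ is the Birkhoff normal form, cf.\ Remark~\ref{rmk:varrho_is_J}), expand near the nodal value, and re-express in $(l,j)$; and your symmetry argument is literally Lemma~\ref{lem:symmetry} — the same map $(x_1,y_1,z_1,x_2,y_2,z_2)\mapsto(-x_1,y_1,-z_1,x_2,-y_2,-z_2)$ fed into Proposition~\ref{prop:symmetry-general} with $\varepsilon_1=\varepsilon_2=-1$. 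The paper merely organizes the expansion through Theorem~\ref{thm:birk-to-partials} (partial derivatives of the desingularized action in terms of the real and imaginary reduced periods and rotation numbers evaluated at $h=Z(l,j)$), which is a systematic packaging of what you describe for the quadratic terms.

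There is, however, one step that is wrong as stated: the claim that the linear part $l\arctan\bigl(\tfrac{6-9s}{\ra}\bigr)+j\log\bigl(\tfrac{\ra^3}{\sqrt2\,\rc(1-s)^2s^2}\bigr)$ can be ``read off in closed form from the linearization of $F_s$ at $m_1$.'' The first-order coefficients of $S^\infty$ are the regularized constants $\sigma_1(0)$ and $\sigma_2(0)$ of Equation~\eqref{eq:sigmas}, i.e.\ the finite parts of the return time and rotation angle on the nearby fibers; the linearization at $m_1$ only controls the rate of the logarithmic divergence, while the finite parts are semiglobal invariants of the whole pinched-torus fiber (this is the content of \cite{VuNgoc03}, and it is reflected in the answer by the appearance of $\rc$, which never degenerates, alongside $\ra$, which vanishes exactly at $s_\pm$). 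A computation based on the linearization alone therefore cannot produce these coefficients. The repair stays inside your own framework: the same elliptic-integral expansions you invoke for the quadratic terms must be carried out already at zeroth order in the derivatives (equivalently, first order in $S$), which is exactly how the paper obtains the constant terms of Equations~\eqref{eqn:dSdl} and~\eqref{eqn:dSdj}, with the $\arg(w)$ and $\ln|w|$ terms of Theorem~\ref{thm:birk-to-partials} implementing the desingularization. With that correction, your plan coincides with the paper's proof.
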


Theorem~\ref{thm:Taylor-intro} is proved in Section~\ref{sec:proof-of-Taylor}.
One important ingredient of the proof is the following proposition which explains how the Taylor series invariant changes under symplectormorphisms that reverse the signs of the components of the momentum map.

\begin{proposition}\label{prop:symmetry-general}
Let $(M,\om,F=(L,H))$ and $(M',\om',F'=(L',H'))$ be semitoric integrable systems, and let $m\in M$ be a focus-focus point which we further assume to be the only singular point in $F^{-1}(F(m))$.
Moreover, let $\Phi\colon M\to M'$ be a symplectomorphism such that $\Phi^*(L',H') = (\varepsilon_1 L, \varepsilon_2 H)$ for some $\varepsilon_1,\varepsilon_2\in \{-1,+1\}$. 
Then $\Phi(m)$ is a focus-focus point, and
\[
 S_m^\infty(l,j) = \varepsilon_2 (S_{\Phi(m)}')^\infty(\varepsilon_1 l, \varepsilon_2 j) + \left(\frac{1-\varepsilon_1}{2}\right) \pi l \quad (\textrm{mod }2\pi l),
\]
where $S_m^\infty(l,j)$ denotes the Taylor series invariant at $m$ and $(S_{\Phi(m)}')^\infty(l,j)$ denotes the Taylor series invariant at $\Phi(m)$.
\end{proposition}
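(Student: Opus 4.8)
The plan is to reduce the statement to an explicit understanding of how each ingredient entering the Taylor series invariant transforms under the pull-back $\Phi^*(L',H')=(\varepsilon_1 L,\varepsilon_2 H)$. Recall that the Taylor series invariant $S_m^\infty$ is built from the action integral (the ``cartographic'' or desingularized action) associated to the focus-focus fibration near $m$: one chooses a local momentum map, regularizes the rotation number / period behaviour near the critical fiber, and extracts the Taylor expansion of the resulting function of $(l,j)$, where $j$ is the imaginary action (proportional to $L-L(m)$) and $l$ is, up to normalization, $2\pi$ times the real part of the complexified action. The first step is therefore to record precisely how the pair $(l,j)$ and the local normal-form data transform: since $\Phi$ is a symplectomorphism intertwining the momentum maps up to the signs $(\varepsilon_1,\varepsilon_2)$, it carries the Liouville foliation near $m$ to the one near $\Phi(m)$, so $\Phi(m)$ is automatically a focus-focus point (its linearization is conjugated), and the imaginary/real action coordinates transform by $l\mapsto \varepsilon_1 l$, $j\mapsto \varepsilon_2 j$ up to the conventional choices.

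\textbf{Key steps.} First I would fix, on both systems, the preferred local momentum map and the branch choices used to define $S_m^\infty$ and $(S'_{\Phi(m)})^\infty$ as in Pelayo--V\~u Ng\d{o}c (Section~\ref{sss:twisting}); this makes the comparison canonical. Second, I would show that $\Phi$ pulls back the desingularized action function on $M'$ to a desingularized action function on $M$ for the foliation near $m$, up to an overall sign $\varepsilon_2$ coming from $H\mapsto\varepsilon_2 H$ and up to the relabeling of variables $(l,j)\mapsto(\varepsilon_1 l,\varepsilon_2 j)$; the orientation of the vanishing cycle and the sign of the logarithmic term are the delicate bookkeeping here. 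Third, I would account for the branch ambiguity: the Taylor series invariant is only defined modulo $2\pi l$ (reflecting the $\Z$-action of Dehn twists around the critical fiber on the choice of action), and reversing the sign of $L$ (i.e.\ $\varepsilon_1=-1$) flips the orientation of the $\mbS^1$-orbit, which shifts the linear-in-$l$ term by a half-period; tracing through the normalization shows this contributes exactly the correction term $\left(\tfrac{1-\varepsilon_1}{2}\right)\pi l$, which is $0$ when $\varepsilon_1=+1$ and $\pi l$ when $\varepsilon_1=-1$. Assembling these three observations yields
\[
 S_m^\infty(l,j) = \varepsilon_2 (S_{\Phi(m)}')^\infty(\varepsilon_1 l, \varepsilon_2 j) + \left(\frac{1-\varepsilon_1}{2}\right) \pi l \quad (\textrm{mod }2\pi l).
\]

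\textbf{Main obstacle.} I expect the crux to be step three: getting the sign and the additive $\pi l$ correction exactly right, rather than up to an unspecified integer multiple of $\pi l$ or $2\pi l$. This requires being careful about the precise conventions for the preferred action $\xi$ and the associated cycle on the fiber (the orientation of $\gamma_{\mathrm{pref}}^z$), and about how the ``cut direction'' in the semitoric polygon construction interacts with the sign reversal of $L$. Concretely, reversing the sign of $L$ swaps the two ``sides'' of the focus-focus value and reverses the $\mbS^1$-action, so the half-index appearing in the regularized period picks up a shift; pinning this down cleanly --- likely by comparing with the model quadratic focus-focus singularity $q=(xy+\xi\eta,\,x\eta-y\xi)$ and tracking the effect of $(q_1,q_2)\mapsto(\varepsilon_2 q_1,\varepsilon_1 q_2)$ --- is the part that needs genuine care; everything else is a transport-of-structure argument under the symplectomorphism $\Phi$.
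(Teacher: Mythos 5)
Your overall strategy (reduce everything to how the local normal-form data and the desingularized action transform under the sign flips, then fix the preferred normalizations and track the branch/orientation bookkeeping) is in the right spirit, but as written it has a genuine gap: the entire content of the proposition --- the factor $\varepsilon_2$ and, above all, the exact additive term $\left(\frac{1-\varepsilon_1}{2}\right)\pi l$ --- is precisely the step you defer to ``tracing through the normalization'' and flag as the main obstacle. The heuristic you offer (reversing the sign of $L$ flips the orientation of the $\mbS^1$-orbit, hence shifts the linear-in-$l$ term by a half-period) is not an argument: flipping the orbit orientation would naively send $\tau_1$ to $2\pi-\tau_1$, and without pinning down the chart one cannot distinguish a shift of $+\pi l$ from a sign change or a shift that is trivial modulo $2\pi l$, especially given the normalization $\partial_l S(0)\in[-\frac{\pi}{2},\frac{3\pi}{2}[$. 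So the proposal is a plan whose hard part coincides with the statement to be proved, and that part is neither carried out nor reduced to a citable result.

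For comparison, the paper does not redo this bookkeeping by hand. It first replaces $(M',\om',F')$ by $(M,\om,\tilde F)$ with $\tilde F=(\varepsilon_1 L,\varepsilon_2 H)$ (legitimate since $\Phi$ is a fiberwise symplectomorphism, so the two Taylor series agree), and then invokes the results of Sepe and V\~u Ng\d{o}c: the assignment of a Taylor series to a local normal-form chart is equivariant for an explicit $K_4\cong\{-1,1\}^2$-action, whose action on charts is $(\phi,(\varrho_1,\varrho_2))\mapsto(A_{\varepsilon_1,\varepsilon_2}\circ\phi,(\varepsilon_1\varrho_1,\varepsilon_2\varrho_2))$ with $G\circ A_{\varepsilon_1,\varepsilon_2}=(\varepsilon_1G_1,\varepsilon_2G_2)$, and whose action on Taylor series is exactly the claimed formula. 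The only new verification the paper performs --- and the one missing from your outline --- is that the transformed chart $(\tilde\phi,\tilde\varrho)$ is again the \emph{preferred} chart for the sign-flipped semitoric momentum map, i.e.\ $\tilde\varrho_1(\varepsilon_1 l,\varepsilon_2 h)=l$ and $\frac{\partial}{\partial h}\bigl(\tilde\varrho_2(\varepsilon_1 l,\varepsilon_2 h)\bigr)>0$; this is what guarantees the two \emph{preferred} representatives (rather than arbitrary representatives modulo $2\pi l$) are related by the stated formula. To complete your proof you would either have to carry out the model computation you sketch in full detail (tracking the vanishing-cycle orientation, the logarithm branch, and the interval normalization of $\partial_l S(0)$), or cite the Sepe--V\~u Ng\d{o}c equivariance lemmas and supply the preferred-chart check. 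A small additional slip: you describe $j$ as proportional to $L-L(m)$; in the paper's conventions $l$ is the $L$-coordinate and $j$ is the imaginary action coming from the second component of the local normal form.
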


Proposition~\ref{prop:symmetry-general} is proved in Section~\ref{sec:symmetry}, relying on some results by Sepe \& V\~{u} Ng\d{o}c~\cite{SepeVN-notes}.

\subsection{Impact and future applications}

The semitoric classification represents important progress in the task
of obtaining global symplectic classifications of integrable systems.
In dimension four, the classes of almost toric systems (see Symington~\cite{Sy2003}) and hypersemitoric systems (see Hohloch $\&$ Palmer~\cite{Hohloch2022}) are natural candidates for the next broader class of systems to be symplectically classified,
and similar classifications could also be performed in higher dimensions.
In any of these cases, an invariant similar to the twisting index invariant would 
have to appear in the classification, either directly or indirectly encoded in other invariants.

Thus, to further expand the classification of integrable systems from semitoric to some broader
class, one must first understand the true nature of the twisting index, on both a conceptual and computational level,
to be able to adapt it properly. These foundations are laid in the present paper.

\subsubsection*{Structure of the article:} 
In Section~\ref{sec:preliminaries}, we explain the necessary background knowledge. In Section~\ref{ss:invariants}, we describe, in detail, the five invariants which appear in the semitoric classification.
In Section~\ref{sec:geometricinterpret}, we provide dynamical, geometric, and topological interpretations of the twisting index by producing several equivalent definitions of the invariant.
In Section~\ref{sec:specificexample}, we compute the twisting
index and some terms of the Taylor series for a specific example, the system given in Equation~\eqref{eqn_ssys} which has two focus-focus singular points.

\subsubsection*{Acknowledgements:} 
We would like to thank Holger Dullin, San \vungoc, Konstantinos Efstathiou, Yohann Le Floch, and Marine Fontaine for helpful discussions and Wim Vanroose for sharing his computational resources. The authors have been partially funded by the FWO-EoS project G0H4518N. The last author was partially supported by UA-BOF project with Antigoon-ID 31722 and an FWO senior postdoctoral fellowship 12ZW320N.

\section{Preliminaries}
\label{sec:preliminaries}

In this section we introduce the background, concepts, notation, and results necessary for this paper. In particular, we summarise some of the properties of semitoric systems and describe their classification by Pelayo \& \vungoc\ \cite{PVNinventiones, PVNacta} and Palmer \& Pelayo \& Tang~\cite{PPT}. For general overviews of semitoric systems we recommend the following surveys: Pelayo \& \vungoc\ \cite{PV2}, Pelayo \cite{Pe}, Sepe \& \vungoc\ \cite{SepeVN-notes}, Alonso \& Hohloch \cite{AH}, Henriksen \& Hohloch \& Martynchuk~\cite{HHM}, and Gullentops $\&$ Hohloch \cite{GH}.

\subsection{Integrable systems}
\label{ss:integrable}

Let $(M,\om)$ be a symplectic manifold. Since the symplectic form $\om$ is non-degenerate, we can associate to each $f \in \mathcal{C}^\infty(M, \R)$ a vector field $\mathcal{X}_f$ using the relation $\om(\mathcal{X}_f,\cdot) = -\dee f(\cdot)$. In this situation, we say that $f$ is a \emph{Hamiltonian function} and $\mathcal{X}_f$ its \emph{Hamiltonian vector field}. The flow of $\mcX_f$ is called the \emph{Hamiltonian flow generated by $f$}. The \emph{Poisson bracket} of $f,g \in \mathcal{C}^\infty(M, \R)$ is defined by $\{f,g\} := \om(\mathcal{X}_f,\mathcal{X}_g)$.

\begin{definition}
A \emph{$2n$-dimensional (completely) integrable system} is a triple $(M,\om,F)$, where $(M,\om)$ is a $2n$-dimensional symplectic manifold and $F:=(f_1, \ldots, f_n):M \to \mathbb{R}^n$, called the \emph{momentum map}, satisfies the following conditions:
\begin{enumerate}
	\item $\{f_i,f_j\}=0$ for all $1 \leq i,j \leq n$.
	\item The Hamiltonian vector fields $\mcX_{f_1}, \ldots, \mcX_{f_n}$ are linearly independent almost everywhere in $M$.
\end{enumerate}
\end{definition}

\begin{definition}
\label{def:isom}
Two integrable systems $(M,\om,F)$ and $(M',\om',F')$ are said to be \emph{isomorphic} if there exists a pair of maps $(\varphi, \varrho)$, where $\varphi:(M,\om) \to (M',\om')$ is a symplectomorphism and $\varrho:F(M) \to F'(M')$ is a diffeomorphism, such that $\varrho \circ F = F' \circ \varphi$. 	
\end{definition}

The rank of $dF$ at any point $x\in M$ is equal to the dimension of the span of $\mcX_{f_1}, \ldots, \mcX_{f_n}$ at $x$. A point $x\in M$ is called a \emph{regular} point of $F$ if $dF(x)$ has maximal rank, and otherwise it is called \emph{singular} and the rank of $dF(x)$ is called the \emph{rank of $x$}.
A point $c\in F(M)$ is called a \emph{regular value of $F$} if $F^{-1}(c)$ only contains regular points, in which case the fiber $F^{-1}(c)$
is also called \emph{regular}.
The symplectic form vanishes on the fibers of $F$, and in particular, the regular fibers of $F$ are Lagrangian submanifolds of $M$. 
Thus, the map $F$ induces what is called a \emph{singular Lagrangian fibration} on $M$.

\subsection{Regular points}
An example of an integrable system is the cotangent bundle $T^*\T^n$ of the $n$-torus $\T^n$. If $(q_1, \ldots, q_n, p_1, \ldots, p_n)$ are local coordinates in $T^*\T^n\cong \mathbb{T}^n\times \R^n$, then the symplectic form is locally given by $\om_{T^*\T^n} = \sum_{j=1}^n dp_j \wedge dq_j$ and the momentum map by $F_{T^*\T^n} = (p_1, \ldots, p_n)$. This example serves as a model for neighborhoods of regular fibers, as shown in the following theorem.

\begin{theorem}[Liouville Arnold Mineur theorem~\cite{Ar}]\label{thm:action-angle}
Let $(M,\om,F)$ be an integrable system and let $c\in F(M)$ be a regular value. If $\Lambda_c := F^{-1}(c)$ is a regular, compact and connected fiber, then there exist neighborhoods $U \subset F(M)$ of $c$ and $V \subset \R^n$ of the origin, such that taking
\[\mcU := \bigsqcup_{r \in U}F^{-1}(r) \text{ and } \mcV:= \T^n \times V \subset T^*\T^n\]
we have that
 $(\mcU,\om|_\mcU,F|_\mcU)$ and $(\mcV,\om_{T^*\T^n}|_{\mcV}, F_{T^*\T^n}|_\mcV)$ are isomorphic integrable systems as in Definition \ref{def:isom}. 
\end{theorem}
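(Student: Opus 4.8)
The plan is to prove the Liouville--Arnold--Mineur theorem by the classical two-step argument: first produce an angle-type $\T^n$-action in a neighborhood of the fiber, then Legendre-transform to action coordinates that straighten out the symplectic form. Concretely, I would begin by noting that since $c$ is a regular value and $\Lambda_c$ is compact and connected, the preimage $\mcU_0 = F^{-1}(U_0)$ of a small ball $U_0 \ni c$ is a proper submersion onto $U_0$, hence (Ehresmann) a fiber bundle; shrinking $U_0$ we may assume the fibers $F^{-1}(r)$ are all compact and connected. The commuting Hamiltonian vector fields $\mcX_{f_1},\dots,\mcX_{f_n}$ are tangent to the fibers (because $\{f_i,f_j\}=0$), linearly independent there (regularity), and their flows commute, so they define a locally free, hence (by compactness and connectedness of the fibers) a transitive $\R^n$-action on each fiber. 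The stabilizer of a point is a cocompact lattice $\Lambda_r \subset \R^n$ depending smoothly on $r$, and a standard argument (choosing a smooth local basis of the lattice bundle, which exists after further shrinking $U_0$) produces a smooth family of lattice bases, from which one builds new commuting vector fields whose time-$2\pi$ flows are the identity; equivalently one obtains a free $\T^n$-action whose orbits are the fibers.

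Next I would introduce the angle coordinates: fix a smooth Lagrangian section $\sigma\colon U_0 \to \mcU_0$ of $F$ (which exists because $dF$ has maximal rank and one can take, e.g., a local symplectic-orthogonal slice), and define the angle map by declaring $\sigma(r)$ to have all angles zero and propagating by the $\T^n$-action. This gives a diffeomorphism $\mcU_0 \cong \T^n \times U_0$ under which $F$ becomes the projection to $U_0$ and, crucially, the pullback of $\om$ takes the form $\om = \sum_j \dee p_j \wedge \dee\theta_j + \sum_{j<k} \sigma_{jk}(p)\,\dee p_j \wedge \dee p_k$ for some functions $p = F$ and a closed $2$-form in the base; here one uses that the $\T^n$-orbits are Lagrangian and that the action is Hamiltonian. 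The second step is to replace the coordinates $p$ on the base by action coordinates $a = a(p)$, chosen so that the base two-form $\sum \sigma_{jk}\dee p_j\wedge\dee p_k$ is absorbed. Since that two-form is closed on the (simply connected, after shrinking) ball $U_0$, it is exact, $= \dee\beta$ for a $1$-form $\beta = \sum \beta_j(p)\,\dee p_j$; setting $a_j := p_j - \beta_j(p)$ one checks $\om = \sum_j \dee a_j \wedge \dee\theta_j$ in the new coordinates, and that $(a_1,\dots,a_n)$ is a diffeomorphism from a (possibly smaller) ball onto a neighborhood $V$ of the origin after translating so that $a(c) = 0$. Finally, $\varrho := a \circ F^{-1}|_{U}\colon U \to V$ (more precisely, $a$ viewed as the change of coordinates on the base) and $\varphi$ the resulting identification $\mcU \to \T^n \times V \subset T^*\T^n$ furnish the isomorphism of integrable systems required by Definition~\ref{def:isom}, since by construction $\varphi^*\om_{T^*\T^n} = \om|_{\mcU}$ and $F_{T^*\T^n}\circ\varphi = \varrho\circ F$.

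The main obstacle is the first step: rigorously extracting a \emph{smooth} family of lattice bases $\lambda_1(r),\dots,\lambda_n(r)$ for the period lattices $\Lambda_r$ of the $\R^n$-action, so that the resulting vector fields and angle coordinates depend smoothly on the base point. This is where compactness and connectedness of $\Lambda_c$ are genuinely used (to see the action is transitive with a full-rank lattice stabilizer, using that a connected compact abelian Lie group is a torus), and where one must shrink $U_0$ so that the lattice bundle is trivial. Everything after that — the local Lagrangian section, the normal form $\om = \sum \dee p_j\wedge\dee\theta_j + (\text{base})$, and the Poincaré-lemma argument producing the action coordinates — is essentially formal once the $\T^n$-action is in hand. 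I would present the lattice argument carefully and treat the subsequent normal-form computation more briskly, or simply cite~\cite{Ar} for the standard details, since the statement here is classical and is only being recorded for use as a local model.
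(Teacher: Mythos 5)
The paper itself does not prove this statement---it is recorded as a classical background result and attributed to~\cite{Ar}---so the only question is whether your reconstruction of the classical argument is sound. Its first half is: the reduction to a proper fibration near $\Lambda_c$, the transitive $\R^n$-action with full-rank lattice stabilizers, and the smooth choice of lattice basis giving a fiberwise $\T^n$-action are exactly the standard route (and you correctly flag the smooth lattice basis as the point where compactness and connectedness are used). One small but real flaw there: your justification for the existence of a local Lagrangian section via a ``symplectic-orthogonal slice'' is vacuous, because the fibers are Lagrangian, so the symplectic orthogonal of $T_x\Lambda_c$ is $T_x\Lambda_c$ itself; the standard fixes are Darboux--Carath\'eodory coordinates adapted to $F$ (in which the $f_i$ become momentum coordinates and $\{q=0\}$ is a Lagrangian section) or correcting an arbitrary section afterwards.

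The second half, however, has a genuine gap. If $\theta_1,\dots,\theta_n$ are the angles of the $\T^n$-action and $p=F$, the generators are $\partial/\partial\theta_j=\sum_k\lambda_{jk}(p)\,\mcX_{f_k}$ with $(\lambda_{jk}(p))$ the lattice-basis matrix, so contracting $\om$ with them gives $\om=\sum_{j,k}\lambda_{jk}(p)\,\dee p_k\wedge \dee\theta_j+\pi^*\eta$ for a closed base form $\eta$, not $\om=\sum_j \dee p_j\wedge \dee\theta_j+\pi^*\eta$ as you assert (the latter would mean the flows of the $f_j$ are already $2\pi$-periodic). Consequently the action variables cannot be taken to be $a_j:=p_j-\beta_j(p)$: they must be primitives of the $1$-forms $\sum_k\lambda_{jk}(p)\,\dee p_k$ (closedness of these forms follows from $\dee\om=0$), which is exactly the statement that $a_j=\frac{1}{2\pi}\oint_{\gamma_j}\varpi$ as in Equation~\eqref{eqn:action-coords}; this loop-integral characterization is the essential content of the theorem and is missing from your construction. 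Moreover, the residual closed term $\pi^*\eta$ is removed by shifting the angles, i.e.\ by changing the section ($\theta_j\mapsto\theta_j+c_j(a)$, equivalently replacing the section by a Lagrangian one), not by changing the base coordinates: a substitution $a=a(p)$ only rewrites the $\dee p\wedge \dee p$ block in the new base coordinates and cannot cancel it, while $\dee a_j\wedge \dee\theta_j$ with $a_j=p_j-\beta_j(p)$ introduces spurious $\dee\beta_j\wedge \dee\theta_j$ cross terms; so the claimed identity $\om=\sum_j\dee a_j\wedge\dee\theta_j$ fails as written. Repairing the sketch amounts to reinstating the two standard ingredients: define the actions by the period/loop integrals, and normalize the angles via a Lagrangian section.
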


In particular this means that $\Lam_c \simeq \T^n$ and that $F|_\mcU$ is a trivial torus bundle. The local coordinates $p_j$ on $T^*\T^n$ introduced above are called \emph{action coordinates} and the $q_j$ are called \emph{angle coordinates}. Furthermore, following~\cite{Ar}, if $\{\ga_1(c), \ldots, \ga_n(c)\}$ form a basis of the homology group $H_1(\Lam_c)$, varying smoothly with $c \in U$, then the action coordinates are given by 
\begin{equation}\label{eqn:action-coords}
 p_j(c) = \frac{1}{2\pi} \oint_{\ga_j(c)} \varpi,
\end{equation}
where $\varpi$ is any one-form such that $d \varpi|_{\mcU} = \omega$ on $\mcU$.

\begin{remark}\label{rmk:integrate-cycles}
 As above, consider $\mathbb{T}^n\times \R^n$ with coordinates $(q_1,\ldots, q_n,p_1,\ldots, p_n)$ and symplectic form $\om_{T^*\T^n}$. Note that $\om = \dee \alpha$ where $\alpha = \sum p_i\dee q_i$. Note that the Hamiltonian vector field of $p_i$ is simply $\mathcal{X}_{p_i}=\frac{\partial}{\partial q_i}$. Fix some value $x\in\R^n$ and consider $\Lambda = \pi^{-1}(x)$ where $\pi$ is the projection onto $\R^n$. 
 Let $\gamma$ denote an orbit of the flow of $\mathcal{X}_{p_i}$ contained in $\Lambda$. Then
 \[
 \frac{1}{2\pi}\int_{\gamma}\alpha = \frac{1}{2\pi}\int_0^{2\pi} p_i \dee t = p_i.
 \]
 Let $\tilde{p}$ be an action variable of an integrable system, let $x$ be a regular value of the integrable system, and suppose that $\gamma_x$ is a closed orbit of the flow generated by $\tilde{p}$ contained in the fiber over $x$. Then combining the above argument with Theorem~\ref{thm:action-angle}, we conclude that 
 $\frac{1}{2\pi} \int_{\gamma_x} \alpha = \tilde{p}(x)$
 for any $\alpha$ such that $\dee \alpha = \omega$.
\end{remark}

\subsection{Integral affine structures}
\label{sec:int-affine}

The group $\mathrm{GL}(n,\Z) \ltimes \R^n$ is called the group of integral affine maps of $\R^n$, and this group acts on $\R^n$ where the first component acts by composition and the second acts by translation.
An \emph{integral affine structure} on an $n$-manifold $X$ is an atlas of charts on $X$
such that the transition functions between these charts are integral affine maps on $\R^n$.
If $X$ and $Y$ are manifolds equipped with integral affine structures, we call a map $g\colon X \to Y$ an integral affine map if it sends the integral affine structure of $X$ to the integral affine structure of $Y$.

Now suppose that $(M,\om,F)$ is an integrable system with connected fibers, and let $B=F(M)$ denote the momentum map image.
Let $B_r \subseteq B$ denote the set of regular values of $F$.
Given any $c\in B_r$, applying Theorem~\ref{thm:action-angle} induces action coordinates $p_1,\ldots, p_n$
in a neighborhood of $c$.
Since they arise from a choice of basis of $H_1(\Lambda_c)$ and a choice of primitive of $\om$ via Equation~\eqref{eqn:action-coords}, 
the action coordinates are unique up to the action of $\mathrm{GL}(n,\Z) \ltimes \R^n$.
Thus, the action coordinates induce an integral affine structure on $B_r$.
Note that this construction also works if the fibers of $F$ are disconnected, in which case the base $B$ of the Lagrangian fibration 
cannot be identified with the image $F(M)$ so the description is somewhat more complicated.

\subsection{Singular points}
Theorem~\ref{thm:action-angle} gives a normal form for a neighborhood of a regular fiber.
Now, we move on to gaining a local understanding of singular points.
In order to classify singular points of an integrable system, we first introduce a notion of non-degeneracy.

\begin{definition}
\label{def_nondeg}
Let $(M,\om,F=(f_1,\ldots,f_n))$ be an integrable system. A rank $0$ singular point $p\in M$ of $F$ is {\em non-degenerate} if the Hessians
$d^2f_1(p),\ldots, d^2f_n(p)$ span a Cartan subalgebra
of the Lie algebra of quadratic forms on the tangent space $T_pM$.
\end{definition}

The notion of non-degenerate singular points can also be extended to points of all ranks, see Bolsinov \& Fomenko~\cite[Section 1.8]{bolsinov-fomenko}.
We will explain an equivalent formulation of this condition for four dimensional integrable systems in Section~\ref{sec:dim4}.

The following local theorem gives a local normal form for non-degenerate singular points. It is based on works by Rüssmann~\cite{Russmann64}, Vey~\cite{Vey78}, Colin de Verdière and Vey~\cite{ColindeVerdiereVey79}, Eliasson~\cite{Eliasson84, Eliasson90}, Dufour and Molino~\cite{DufourMolino88}, Miranda~\cite{Miranda2003,Miranda2014}, Miranda and Zung~\cite{MirandaZung04}, Miranda and V\~{u} Ng\d{o}c~\cite{MirandaVuNgoc05}, V\~{u} Ng\d{o}c and Wacheux~\cite{VuNgocWacheux13} and Chaperon~\cite{Chaperon13}.

\begin{theorem}[Local normal form for non-degenerate singular points]
\label{EliassonMZ}
 Let $(M,\om,F)$ be a $2n$-dimensional integrable system
 and let $m\in M$ be a non-degenerate singular point. Consider also $(\mathbb{R}^{2n},\omega_{\mathbb{R}^{2n}})$ with coordinates $(q,p):=(q_1, \ldots, q_n, p_1, \ldots, p_n)$, where $\omega_{\mathbb{R}^{2n}} = \sum_i \dee p_i\wedge \dee q_i$. 
 Then there exist neighborhoods $U$ of $m$ and $V$ of the origin in $\mathbb{R}^{2n}$, a symplectomorphism $\varphi:U \to V$ and a function
 $G:=(g_1, \ldots, g_n): \mathbb{R}^{2n}\to\R^n$ where each $g_i$ is given by one of the following:
 \setlength{\columnsep}{-2.5cm}
 \begin{multicols}{2}
 \begin{enumerate}[nosep]
  \item 
 non-singular: $g_i = p_i$,
  \item 
  elliptic: $g_i = \frac{1}{2}(q_i^2+p_i^2)$,
  \item 
  hyperbolic: $g_i = q_ip_i$,
 \item 
 focus-focus: $\begin{cases}  g_i  = q_ip_{i+1}-q_{i+1}p_i,\\ g_{i+1}  = q_ip_i+q_{i+1}p_{i+1},\end{cases}$ 
 \end{enumerate}
 \end{multicols}
 such that $\{f_i, g_j \circ \varphi \}=0$ for all $i,j$. Moreover, if there are no hyperbolic components, then there is a local diffeomorphism $\varrho:(\R^n,F(m)) \to (\R^n,0)$ such that $\varrho \circ F =  G \circ \varphi$ on $U$, so $(U,\omega|_U,F|_U)$ and $(V,\omega_{\mathbb{R}^{2n}}|_V,G|_V)$ are isomorphic as in Definition \ref{def:isom}.
\end{theorem}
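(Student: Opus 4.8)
The plan is to build this classical statement in the usual three stages --- a linear model, a formal normal form, and a passage from formal to smooth --- and then to read off the ``moreover'' clause as a soft consequence. First a preliminary reduction: by a partial action--angle argument one may split a rank-$k$ non-degenerate singular point as the product of a $2k$-dimensional regular factor, which accounts for the non-singular blocks $g_i=p_i$, and a rank-zero non-degenerate singular point; so from now on I would assume that $m$ has rank $0$ and $F(m)=0$.

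\emph{Linear model.} Non-degeneracy (Definition~\ref{def_nondeg}) says precisely that $d^2f_1(m),\dots,d^2f_n(m)$ span a Cartan subalgebra $\mathfrak h$ of the Lie algebra of quadratic forms on $T_mM$ (isomorphic to $\mathfrak{sp}(2n,\R)$ under the Poisson bracket). By Williamson's classification of such subalgebras up to the adjoint action, $\mathfrak h$ is conjugate to the span of the model forms $\tfrac12(q_i^2+p_i^2)$, $q_ip_i$, and the focus--focus pairs $q_ip_{i+1}-q_{i+1}p_i$, $q_ip_i+q_{i+1}p_{i+1}$. Hence, after a linear symplectomorphism and a linear reparametrization of the target, each $f_i$ may be assumed to have the corresponding $g_i$ as its quadratic part; this pins down the type (elliptic, hyperbolic, focus--focus) of each block.

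\emph{Formal, then smooth.} A Birkhoff-type induction on the Taylor order then removes, degree by degree, the part of the $k$-jets of the $f_i$ lying outside the normal-form space, the surviving terms being functions of the model $G$ that one absorbs into a formal diffeomorphism of the target; this yields a formal symplectomorphism $\hat\varphi$ and a formal diffeomorphism $\hat\varrho$ of $(\R^n,0)$ with $\hat\varrho\circ F=G\circ\hat\varphi$. The real work is making this smooth. By Borel's lemma one realizes $\hat\varphi$ by a genuine smooth symplectomorphism, reducing to the case that $F-G$ (after a base diffeomorphism) is flat at the origin; a deformation (Moser) argument interpolating between $G$ and $F$ then removes the flat remainder, \emph{provided} one can solve the associated cohomological equation with flat right-hand side. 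This solvability --- a division / relative Poincar\'e lemma statement for the singular Lagrangian foliation of the model (Malgrange--Mather preparation, plus vanishing of the relevant flat cohomology) --- is the main obstacle, and it is exactly the part that is settled differently in the different regimes: the analytic case goes back to R\"ussmann~\cite{Russmann64} and Vey~\cite{Vey78}, the smooth elliptic case to Eliasson~\cite{Eliasson84,Eliasson90}, the hyperbolic blocks to Colin de Verdi\`ere \& Vey~\cite{ColindeVerdiereVey79} and Dufour \& Molino~\cite{DufourMolino88}, the four-dimensional focus--focus block --- where a gap in the original argument was later identified --- to \vungoc\ \& Wacheux~\cite{VuNgocWacheux13} and Chaperon~\cite{Chaperon13}, with the general and equivariant versions due to Miranda~\cite{Miranda2003,Miranda2014}, Miranda \& Zung~\cite{MirandaZung04}, and Miranda \& \vungoc~\cite{MirandaVuNgoc05}.

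\emph{The fibration statement.} Finally suppose there are no hyperbolic blocks. From $\{f_i,g_j\circ\varphi\}=0$ for all $i,j$, each $g_j\circ\varphi$ is invariant under every flow $\mcX_{f_i}$ and so is constant on the connected leaves of the Lagrangian foliation defined by $F$; near a non-degenerate point with only elliptic and focus--focus blocks those leaves are exactly the connected components of the fibers of $F$ (the local fibers of $G$ being connected), and symmetrically for $G\circ\varphi$, so the two foliations agree near $m$. Therefore $F(x)\mapsto(G\circ\varphi)(x)$ is a well-defined local diffeomorphism $\varrho\colon(\R^n,F(m))\to(\R^n,0)$ with $\varrho\circ F=G\circ\varphi$, which is the isomorphism of Definition~\ref{def:isom}. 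The hypothesis is essential: a hyperbolic block $q_ip_i$ has the disconnected cross-shaped fiber $\{q_ip_i=0\}$, so $F$ and $G\circ\varphi$ need not share leaves and the comparison map between their images need not even be smooth.
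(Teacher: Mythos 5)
The paper itself does not prove Theorem~\ref{EliassonMZ}: it is quoted as a known deep result, with the proof delegated to the references listed just before the statement. So the only meaningful comparison is with that literature, and your outline does follow its standard route --- splitting off the regular directions, Williamson's linear classification, formal (Birkhoff) normalization, then Borel plus a Moser path argument whose crux is the solvability of a cohomological/division problem with flat data, which you correctly identify as the hard analytic core and leave to the cited papers. As a proof sketch this is faithful, and deferring the analytic step is no worse than what the paper does.

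The one place where you supply an argument of your own, the ``moreover'' clause, has a genuine gap. Constancy of each $g_j\circ\varphi$ along the flows of the $f_i$, together with connectedness of the local fibers, gives only a set-theoretic factorization: a well-defined map $\varrho$ on the image $F(U)$ with $\varrho\circ F=G\circ\varphi$. It does not give that $\varrho$ is smooth, nor that it extends to a diffeomorphism between full neighborhoods of $F(m)$ and $0$ in $\R^n$: when elliptic blocks are present $F(U)$ has corners, and even on its interior one must show that $F$ is a \emph{smooth} function of $G\circ\varphi$ and conversely. That smooth factorization is precisely the nontrivial content of the clause; in the literature it rests on division/Schwarz-type statements to the effect that every smooth local first integral of the model foliation is a smooth function of $G$, valid exactly because there are no hyperbolic components --- see Eliasson~\cite{Eliasson84,Eliasson90}, Miranda \& Zung~\cite{MirandaZung04}, and, for the focus-focus block where the original argument was incomplete, \vungoc\ \& Wacheux~\cite{VuNgocWacheux13} and Chaperon~\cite{Chaperon13}. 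Your closing remark about hyperbolic blocks correctly explains why the hypothesis is needed, but the positive direction requires this extra ingredient; it is not a soft consequence of leafwise constancy, and treating it as such understates what the no-hyperbolic assumption is actually buying.
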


\subsection{Singular points in dimension four}\label{sec:dim4}
Now we specialize to the situation that $n=2$, and so $\mathrm{dim}(M)=4$.
We use the notation $f_1=L$ and $f_2=H$, so $F=(L,H)$. In this case, the following proposition becomes very useful for the verification of non-degeneracy.

\begin{lemma}[Bolsinov $\&$ Fomenko~\cite{bolsinov-fomenko}]
\label{nondegCriterium}
 Let $(M,\om,F = (L,H))$ be a four dimensional integrable system.
 Let $p\in M$ be a rank $0$ singular point of $F$ and $\om_p$ the matrix of the symplectic form with respect to a
 basis of $T_p M$ and $\om_p^{-1}$ its inverse. Denote by $d^2L(p)$ and $d^2H(p)$ the matrices of the Hessians
 of $L$ and $H$ with respect to the same basis.
 Then $p$ is non-degenerate if and only if $d^2L(p)$ and
 $d^2H(p)$ are linearly independent and there exists a linear
 combination of $\om_p^{-1}d^2 L(p)$ and $\om_p^{-1}d^2 H(p)$ with four distinct eigenvalues. 
\end{lemma}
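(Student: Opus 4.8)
The statement to prove is Lemma~\ref{nondegCriterium}, the Bolsinov--Fomenko non-degeneracy criterion in dimension four. My plan is to unwind Definition~\ref{def_nondeg} directly and translate the Cartan-subalgebra condition into the stated eigenvalue condition, using the linear-symplectic-algebra dictionary between quadratic forms on $(T_pM,\om_p)$ and the symplectic Lie algebra $\mathfrak{sp}(T_pM)\cong\mathfrak{sp}(4,\R)$.

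First I would fix a symplectic basis of $T_pM$, so $\om_p$ becomes the standard matrix $\Omega$, and recall that the map $Q\mapsto \Omega^{-1}\,d^2Q(p)$ (equivalently $A_Q$ defined by $\om_p(A_Q\,\cdot,\cdot)=d^2Q(p)(\cdot,\cdot)$) is a linear isomorphism from the space of quadratic forms on $T_pM$ onto $\mathfrak{sp}(4,\R)$, and that it carries the Poisson bracket of quadratic forms to the Lie bracket (commutator) on $\mathfrak{sp}(4,\R)$. Under this identification, the abelian subalgebra $\langle d^2L(p),d^2H(p)\rangle$ of the Lie algebra of quadratic forms corresponds to the abelian subalgebra $\mathfrak{c}:=\langle A_L,A_H\rangle\subseteq\mathfrak{sp}(4,\R)$, which has dimension $2=n$ exactly when $d^2L(p)$ and $d^2H(p)$ are linearly independent — this accounts for the first clause of the lemma. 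So the content is: $\mathfrak{c}$ is a Cartan subalgebra of $\mathfrak{sp}(4,\R)$ if and only if it is two-dimensional and some element of $\mathfrak{c}$ has four distinct eigenvalues.

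Next I would invoke the structure theory of $\mathfrak{sp}(4,\C)$ (a rank-two simple Lie algebra): a Cartan subalgebra is a maximal abelian subalgebra consisting of semisimple (diagonalizable, over $\C$) elements, and all Cartan subalgebras have dimension equal to the rank, which is $2$. For the ``if'' direction: if $A\in\mathfrak{c}$ has four distinct eigenvalues then $A$ is a regular semisimple element, its centralizer in $\mathfrak{sp}(4,\R)$ is exactly a Cartan subalgebra of dimension $2$, and since $\mathfrak{c}$ is abelian of dimension $2$ and contains $A$, it must equal that centralizer; hence $\mathfrak{c}$ is Cartan. For the ``only if'' direction: if $\mathfrak{c}$ is a Cartan subalgebra, then all its elements are semisimple, and I would argue that a generic element $A=aA_L+bA_H$ has distinct eigenvalues — if every element of a $2$-dimensional Cartan subalgebra of $\mathfrak{sp}(4,\C)$ had a repeated eigenvalue, the root-space decomposition would force a contradiction (the four eigenvalues of an element of $\mathfrak{sp}$ come in pairs $\pm\lambda_1,\pm\lambda_2$, and for Cartan elements the roots $\pm(\lambda_1\pm\lambda_2)$ and $\pm2\lambda_1,\pm2\lambda_2$ are nonzero for generic choices, so the locus where two eigenvalues coincide is a proper algebraic subset of $\mathfrak{c}$). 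Picking $A$ outside that subset gives the required linear combination with four distinct eigenvalues.

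The main obstacle I anticipate is the ``only if'' direction: getting the eigenvalue genericity statement cleanly from ``Cartan'' without reproving too much Lie theory. The cleanest route is to note that $\mathfrak{c}\otimes\C$ is a Cartan subalgebra of $\mathfrak{sp}(4,\C)$, that $\mathfrak{sp}(4,\C)$ acts on $\C^4$ with the $\mathfrak{c}\otimes\C$-weights being $\{\pm e_1,\pm e_2\}$ (the standard weights), and that the subset of $\mathfrak{c}$ where two of these four weights agree — i.e. where $e_1=\pm e_2$ — is a union of two proper linear subspaces; any $A$ avoiding both has four distinct eigenvalues. I would also remark that this is essentially the calculation already recorded in Bolsinov--Fomenko~\cite{bolsinov-fomenko} and cite it for the details, since for the purposes of this paper the lemma is a known tool rather than a new result. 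Everything else — the symplectic-algebra dictionary and the equivalence of the two dimension-$2$ conditions — is routine once the framework is set up.
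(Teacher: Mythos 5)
The paper itself contains no proof of this lemma: it is imported verbatim from Bolsinov--Fomenko~\cite{bolsinov-fomenko} and used as a black box, so there is no argument of record to compare yours against. On its own merits, your reconstruction is the standard Lie-theoretic proof and it is essentially correct: the dictionary $Q \mapsto \om_p^{-1}d^2Q$ identifying quadratic forms with $\mathfrak{sp}(4,\R)$ and brackets with commutators, the observation that linear independence of the Hessians is exactly two-dimensionality of the span $\mathfrak{c}$, the ``if'' direction by noting that an element with four distinct eigenvalues is regular semisimple so its centralizer in $\mathfrak{sp}(4,\R)$ is a two-dimensional Cartan subalgebra which must coincide with $\mathfrak{c}$, and the ``only if'' direction by genericity of regular elements. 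Two details deserve explicit mention in a written version. First, you use without comment that $\mathfrak{c}$ is abelian; this needs the one-line remark that $\{L,H\}=0$ implies, by taking the quadratic part of the bracket at the common critical point $p$, that $[\om_p^{-1}d^2L(p),\,\om_p^{-1}d^2H(p)]=0$. Second, in your last paragraph the eigenvalue-coincidence locus is not only $\{e_1=\pm e_2\}$ but also $\{e_1=0\}$ and $\{e_2=0\}$ (collision of the pair $\pm e_i(A)$, i.e.\ vanishing of the long roots $2e_1,2e_2$), which you had in fact listed correctly earlier; since this is still a finite union of proper real subspaces of $\mathfrak{c}\cong\R^2$, the genericity conclusion, and hence the proof, goes through unchanged.
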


Let $p$ be a singular point of rank 1 in a four dimensional integrable system $(M,\om,F=(L,H))$.
Then there exists some $\al,\be\in\mathbb{R}$ with $\al d H(p) + \be dL(p) = 0$.
The flows of $\mcX^L$ and $\mcX^H$ induce an $\mathbb{R}^2$\--action
which has a one-dimensional orbit through $p$.
Let $P_p\subset T_pM$ be the tangent line of this orbit at $p$ and denote by $P^\om_p$ the symplectic orthogonal complement of $P_p$.
Notice that $P_p\subset P^\om_p$ and since $L$ and $H$ Poisson commute they are invariant under the $\R^2$\--action. Thus $\al d^2H(p) + \be d^2L(p)$
descends to the quotient $P_p/P^\om_p$.

\begin{definition}[Bolsinov $\&$ Fomenko~\cite{bolsinov-fomenko}]
 Let $(M,\om,F = (L,H))$ be a four dimensional integrable system.
 A rank 1 singular point $p$ is \emph{non-degenerate} if $\al d^2H(p) + \be d^2L(p)$
 is invertible on $P_p/P^\om_p$.
\end{definition}

\section{Semitoric systems}
\label{ss:invariants}

Here we introduce the class of systems that we will be focusing on in this paper.
We will also give a precise and concise description of the invariants of a simple semitoric system.
In what follows we identify $\mbS^1$ with $\R/2\pi\Z$, and so an effective action of $\mbS^1$ is equivalent to a periodic $\R$-action with minimal period $2\pi$. Recall that a map is called \emph{proper} if the preimage of every compact set is compact.

\begin{definition}
A \emph{semitoric system} is a four dimensional integrable system $(M,\om,F=(J,H))$, such that:
\begin{enumerate}
	\item The Hamiltonian flow generated by $L$ is an effective $\mbS^1$-action.
	\item $L$ is proper. 
	\item The singular points of $F$ are non-degenerate and have no hyperbolic components.
\end{enumerate}
If, moreover, the following condition is satisfied, we say that the semitoric system is \emph{simple}:
\begin{enumerate}
\setcounter{enumi}{3}
	\item In each level set of $L$, there is at most one singularity of focus-focus type.
\end{enumerate}
\end{definition}

Note that, if $M$ is compact, then $L$ is automatically proper. Given semitoric systems $(M,\om,(L,H))$ and $(M',\om',(L',H'))$ an \emph{isomorphism of semitoric systems} is a pair $(\varphi,\varrho)$, where $\varphi$ is a symplectomorphism $\varphi \colon (M, \om)\to (M', \om')$ and $\varrho = (\varrho_1, \varrho_2):F(M) \to F'(M')$ is a diffeomorphism
satisfying $\varrho \circ F = F' \circ \varphi$ and $\varrho$ is of the form $\varrho(l,h) = (l, \varrho_2(l,h))$ with $\partial_h \varrho_2 >0$.

\begin{remark}
\label{re:isomtypes}
Isomorphisms of semitoric systems are a particular case of the isomorphisms of integrable systems from Definition \ref{def:isom}, where $(\varphi,\varrho)$ is chosen to preserve the first coordinate and the orientation of the second one. 	
\end{remark}

A consequence of Theorem \ref{EliassonMZ} is that the only types of singularities which can occur in semitoric systems are:
\begin{itemize}
 \item \emph{elliptic-elliptic points}: rank 0 points with two elliptic components,  
 \item \emph{focus-focus points}: rank 0 points with one focus-focus pair of components,
 \item \emph{elliptic-regular points}: rank 1 points with one elliptic component and one non-singular component.
\end{itemize}

Furthermore, due to \vungoc~\cite{VuNgoc07}, the fibers of the momentum map
$F\colon M\to \R^2$ in a semitoric system are connected, and thus we may identify the image $F(M)$ with the base of the singular Lagrangian fibration that $F$ induces on $M$.

Pelayo $\&$ V\~{u} Ng\d{o}c \cite{PVNinventiones, PVNacta} classified simple semitoric systems by means of five invariants.
Their original classification only applied to simple semitoric systems, but
 this classification has since been extended by Palmer \& Pelayo \& Tang~\cite{PPT} to include all semitoric systems, simple or not.
 Note that the twisting index invariant appears rather differently in the non-simple case, so for the present paper we will focus on the original classification which only applies to simple semitoric systems.

Now we will briefly describe each of the five invariants. 
More details can be found in the original papers by Pelayo \& \vungoc~\cite{PVNinventiones, PVNacta}.
In the following, let $(M,\om,F=(L,H))$ be a simple semitoric system.
Note that the invariant that we are focused on in this paper is the twisting index invariant, and we will now see that it is mainly concerned with the relationship between the Taylor series invariant and the polygon invariant.

\subsection{The number of focus-focus points invariant}
\label{sss:nff}  
The first symplectic invariant of a semitoric system is the \emph{number of focus-focus singularities} $\nff \in \mathbb{N}_0$, which is finite by V\~{u} Ng\d{o}c~\cite{VuNgoc07}. Since our system is simple, we can order the focus-focus singularities $m_1, \ldots, m_\nff$ according to their $L$-values, so that $L(m_1) < \ldots < L(m_\nff)$.

\subsection{The polygon invariant}
\label{sss:polygon}
Let $B=F(M) \subset \R^2$ be the image of the momentum map and $B_r \subseteq B$ be the set of regular values of $F$. 
As discussed in Section~\ref{sec:int-affine}, the action coordinates of Theorem~\ref{thm:action-angle} induce an integral affine structure on $B_r$, which
in general does not agree with the one induced by its inclusion in $\R^2$.
Moreover, if the semitoric system has at least one focus-focus point, then there cannot exist an integral affine map $g\colon B_r\to\R^2$ due to the presence of monodromy in the integral affine structure of $B_r$.
In this section we will explain a procedure, introduced by \vungoc\ \cite{VuNgoc07}, to obtain a map from $B_r$ to $\R^2$ which
is an integral affine map away from certain sets in $B_r$. 
Such a map can be thought of as ``straightening out'' the integral affine structure of $B_r$, and the image of such a map is a rational convex polygon. The equivalence class of such polygons (up to the freedom in the choice of map) is the semitoric polygon invariant. We explain the details now.

Recall that $\{m_1,\ldots,m_{\nff}\}$ denote the focus-focus points of the system. For $r \in \{1, \ldots, \nff\}$, let $c_r:=F(m_r)=(\lam_r,\eta_r)\in\R^2$ denote the focus-focus singular value and let $b_{\lam_r}: = \{(\lam_r,y)\;|\; y \in \R\}$ be the vertical line through $c_r$. We associate a sign $\varepsilon_r \in \Z_2: = \{-1,+1\}$ with each singular value and let $b^{\varepsilon_r}_{\lam_r}$ denote the half-line starting in $c_r$ which goes upwards if $\varepsilon_r=+1$ and downwards if $\varepsilon_r=-1$. For each choice of signs $\varepsilon=(\varepsilon_r)_{r=1}^\nff \in (\Z_2)^\nff$, we remove the union of half-lines $b^\varepsilon := \bigcup_r b_{\lam_r}^{\varepsilon_r}$ from $B$, obtaining the simply connected set $B \backslash b^\varepsilon \subset \R^2$.

\vungoc\ \cite{VuNgoc07} showed that for each vector $\varepsilon$ there exists a map $f_\varepsilon:B \to \R^2$ such that:
\begin{enumerate}
 \item $\De:= f_\varepsilon(B)$ is a rational convex polygon, 
 \item $f_\varepsilon$ is a homeomorphism,
 \item $f_\varepsilon$ preserves the first coordinate, i.e.\ $f_\varepsilon(l,h)=\bigl(f_\varepsilon^{(1)},f_\varepsilon^{(2)}\bigr)(l,h)=\bigl(l,f_\varepsilon^{(2)}(l,h)\bigr)$,
 \item  $(f_\varepsilon)|_{B\backslash b^\varepsilon}$ is a diffeomorphism into its image which sends the integral affine structure of $B_r\backslash b^\varepsilon$ to the integral affine structure of $\R^2$.
 \end{enumerate}
   The map $f_\varepsilon$ is commonly referred to as \emph{cartographic homeomorphism}.
Cartographic homeomorphisms are not unique:
let \begin{equation}\label{eqn:T}
T := \begin{pmatrix}
1 & 0 \\ 1 & 1
\end{pmatrix} \in \text{GL}(2,\Z),
\end{equation} 
let $\mcG:=\{T^n \;|\; n \in \Z\}$,
and let $\mcV := \mcG \ltimes \R$.
Then, given a choice of $\varepsilon$, the map $f_\varepsilon$ obtained via the construction above is unique up to left composition by an element of $\mcV$. 
Note that the components of the cartographic homeomorphism are the actions of the action-angle coordinates.

By definition, the map $f_\varepsilon$ depends on the choice of $\varepsilon$, the effect of which we describe now. Denote by $t_{b_{\lam_r}} : \R^2 \to \R^2$ the map given by
\[
 t_{b_{\lam_r}}(x,y) = \begin{cases} (x,y), &\text{ if }x\leq \lam_r\\ (x,y + x-\lam_r), &\text{ if }x>\lam_r. \end{cases}
\]
That is, intuitively $t_{b_{\lam_r}}$
leaves the half-plane to the left of $b_{\lam_r}$ invariant and applies $T$, relative to a choice of origin on $b_{\lam_r}$, to the half-plane on the right of $b_{\lam_r}$. 
Given a vector $\ka=(\ka_1,\ldots, \ka_\nff)\in \Z^\nff$, set $t_\ka := t_{\lam_1}^{\ka_1} \circ \cdots \circ t_{\lam_\nff}^{\ka_\nff}$, and note that $t_\ka$ is a piecewise integral affine map. 
A different choice of signs changes the cartographic homeomorphism by composition with such a map, as we will describe below.

We now define the polygon invariant. The triple $\big(\De, (b_{\lam_r})_{r=1}^\nff,(\varepsilon_r)_{r=1}^\nff\big)$ is called \emph{weighted polygon}. The freedom in the definition of $f_\varepsilon$ can be expressed as an action of the group $(\Z_2)^\nff \times\ \mcG$ on the space of weighted polygons: letting $(\varepsilon',T^n) \in (\Z_2)^\nff \times \mcG$ and taking $u := \tfrac{1}{2}(\varepsilon - \varepsilon\varepsilon')$, the action is given by
$$ ( \varepsilon', T^n) \cdot \big(\De, (b_{\lam_r})_{r=1}^\nff,(\varepsilon_r)_{r=1}^\nff\big) = \bigl( t_u(T^n(\De)), (b_{\lam_r})_{r=1}^\nff,(\varepsilon_r'\varepsilon_r)_{r=1}^\nff\bigr).$$
Since the non-uniqueness of the cartographic homeomorphism is encoded in the group action, this assignment described in the following definition is well-defined.

\begin{definition}\label{def:polygon}
Given a choice of cartographic homeomorphism, we define the 
\emph{polygon invariant} of $(M,\om,F)$ to be the $(\Z_2)^\nff \times \mcG$-orbit of weighted polygons obtained
from its image, briefly denoted by $(\De, b, \varepsilon)$.
\end{definition}

Now, let $f_\varepsilon$ be any choice of cartographic homeomorphism with cut directions $\varepsilon$,
let $M^\varepsilon := M\setminus F^{-1}(b^\varepsilon)$, and let $\mu:=f_\varepsilon\circ F\colon M\to \R^2$.
Then $\mu$ restricted to $M^\varepsilon$ is an integrable system of which all points are regular, elliptic-regular, or elliptic-elliptic, and by definition the induced integral affine structure
on the regular points of $\mu(M^\varepsilon)$ is equal to the integral affine structure on $\R^2$.
Thus, the Hamiltonian flows of the components of $\mu$ form a Hamiltonian $2$-torus action on $M^\varepsilon$.
For this reason, we call $\mu$ a \emph{generalized toric momentum map}.
Note that the semitoric polygon is the image of $\mu$, that is, $\mu(M) = f_\varepsilon(B) = \Delta$.

\subsection{The height invariant}
\label{sss:height}
Let $f_\varepsilon$ be a cartographic homeomorphism and let $\mu = (\mu_1,\mu_2):=f_\varepsilon\circ F$ denote the associated generalized toric momentum map.
 For $r \in \{1, \ldots, \nff\}$, we define $h_r$ as the height of the corresponding focus-focus value measured from the lower boundary of the polygon $\De=f_\varepsilon(M)$:
 $$ h_r := \mu_2(m_r) - \min_{p \in \De \cap b_{\lam_r}} \operatorname{proj}_2(p),$$
where $\operatorname{proj}_2:\R^2 \to \R$ is the natural projection on the second coordinate. Geometrically, the value $h_r$ measures the symplectic volume of the submanifold
\[
 \{p \in M \;|\; L(p)=L(m_r) \text{ and }H(p)<H(m_r)\}
\]
and therefore is independent of the choice of $f_\varepsilon$. The \emph{height invariant} of the simple semitoric system $(M,\om,F)$ is the $\nff$-tuple $(h_1, \ldots, h_\nff)$ of heights of all focus-focus values.

\subsection{The Taylor series invariant}
\label{sss:taylor} 
In this section, we outline a construction due to \vungoc~\cite{VuNgoc03} of a semilocal invariant of a focus-focus fiber.
Let $m \in M$ be a focus-focus singularity of the semitoric system $(M,\om,F)$. By Theorem \ref{EliassonMZ} there exist neighborhoods $U\subset M$ of $m$ and $V \subset \R^4$ of the origin and a isomorphism of integrable systems $(\varphi, \varrho)\colon(U,\om|_U,F|_U)\to (V,\om_{\R^4}|_V,G|_V)$, where 
\begin{equation} \label{eq:defG}
	G(q_1,q_2,p_1,p_2)=(q_1p_2-q_2p_1,q_1p_1+q_2p_2).
\end{equation}
As discussed in Sepe \& V\~{u} Ng\d{o}c~\cite{SepeVN-notes}, for a semi-local neighborhood of a focus-focus point in a general integrable system, there is still a degree of freedom in the choice of
such a $(\phi,\varrho)$.
In this section, following~\cite{VuNgoc03}, we will describe how a choice of $(\phi,\varrho)$  around a focus-focus point produces an invariant of the fiber containing that point, called the Taylor series invariant.
Note that different choices of $(\phi,\varrho)$ produce different Taylor series invariants (related by a simple formula, we discuss this in Section~\ref{sec:symmetry}), but for a semitoric system there are preferred choices.
If $(M,\om,(L,H))$ is semitoric, then $\varrho = (\varrho_1,\varrho_2)$ is of the form $\varrho(l,h) = (\pm l,\varrho_2(l,h))$ with $\frac{\partial \varrho_2}{\partial h}\neq 0$, and we make the preferred choice of $\phi$ and $\varrho$ such that $\varrho(l,h)=(l,\varrho_2(l,h))$ where $\frac{\partial \varrho_2}{\partial h}>0$. With this choice the invariant we construct in this section is well-defined. Understanding this choice is important in Section~\ref{sec:symmetry}, when we discuss how changing the signs of the components of the momentum map of a semitoric integrable system impacts the invariants.

Now we will consider a neighborhood of the focus-focus fiber. Let $W := F^{-1}(F(U))$ and let $\Phi=(\Phi_1,\Phi_2): W \to G(V)$ be defined by $\Phi = \varrho \circ F$. Let $z:=(l,j) := l+ ij$  denote the coordinate on $G(V) \subseteq \R^2 \simeq \C$ induced by the identification with $\C$, and let $\Lam_z:= \Phi^{-1}(z)$. 
For any nonzero $z\in\Phi(W)$, note that $\Phi^{-1}(z)$ is a regular fiber, and thus by Theorem~\ref{thm:action-angle} and the fact that $L$ is proper, this fiber is diffeomorphic to a $2$-torus.
Note that $\Phi_1=L$, so $\mcX_{\Phi_1} = \mcX_L$ and thus the flow of $\mcX_{\Phi_1}$ generates an effective $\mbS^1$-action on $\Lam_z$. In contrast, the flow of $\mcX_{\Phi_2}$ will be in general quasi-periodic.

Note that the monodromy induced by a focus-focus point discussed in Section~\ref{sss:polygon} will appear again here in the following way:
in principle, to directly emulate the situation of Equation~\eqref{eqn:action-coords}, a smoothly varying basis of the fundamental group of $\Lambda_z$ for $z\in\Phi(W)$ is needed, but this unfortunately does not exist due to the monodromy. Instead, for $\varepsilon\in \Z_2$ let $b_\varepsilon\subset \R^2$ be a ray starting at the origin and going up if $\varepsilon=+1$ and down if $\varepsilon=-1$. Then let
$\tilde{W} = F^{-1}(F(W)\setminus b_\varepsilon)$. Now we choose a basis
$\{\gamma_1^z,\gamma_2^z\}$ of the fundamental group of the torus $\Lam_z$ which varies smoothly with $z\in\Phi(\tilde{W})$, 
and where $\ga_1^z = \gamma_L^z \subset \Lam_z$ is the cycle corresponding to the flow of $\mcX_L$ with its same orientation. 
From Theorem \ref{EliassonMZ} the actions of the system are  given by 
\begin{equation}
	L(z) = \dfrac{1}{2\pi} \oint_{\ga_L^z} \varpi = l,\qquad I(z):= \dfrac{1}{2\pi} \oint_{\gamma_2^z} \varpi,
\label{eq:semigact}
\end{equation} where $\varpi$ is a primitive of the symplectic form $\om|_W$. We moreover impose that the orientation of $\gamma_2^z$ is such that $\tfrac{\partial I}{\partial j}>0$, where $z=l+ij$, which is equivalent to taking the preferred choice of $\varrho$ for semitoric systems. The function $I(z)$ can be extended continuously to all of $\Phi(W)$ but not smoothly. To address this, let now log denote a determination of the complex logarithm with its branch along the ray $b_\varepsilon$ (i.e.~the ray corresponds to the choice of domain of definition for log)
which we used
above for determining a basis of cycles.
V\~{u} Ng\d{o}c~\cite{VuNgoc03} showed that 
\begin{equation}\label{eqn:def-of-S}
S(z) := 2\pi I(z) - 2\pi I(0) + \text{Im}(z \log z -z),
\end{equation} can be extended to a smooth function on all of $\Phi(W)$.
The function $S$ is often referred to as the \emph{desingularised} or \emph{regularised} action, cf.\ Pelayo \& \vungoc\ \cite{PV2}. 
The Taylor series invariant $S^\infty$ associated to the focus-focus singularity $m$ is the Taylor series of the function $S$.
Note that $S(z)$ is normalized to satisfy $S(0)=0$ in Equation~\eqref{eqn:def-of-S}.

Though $S$ is not unique (see Remark~\ref{rmk:nonunique}), its Taylor series $S^\infty$ is uniquely defined up to the addition of integer multiples of $2\pi l$.
That is, letting $\R_0[[l,j]]$ denote the set of power series in the variables $l$ and $j$ with zero constant term, $S^\infty$ is uniquely determined as an element of $\R_0[[l,j]]/(2\pi l\, \Z)$. For the purposes of the calculations in this paper, we will take the representatives which satisfies $-\tfrac{\pi}{2}  \leq \partial_l S(0) < \tfrac{3\pi}{2}$. By performing this construction for all focus-focus singularities $m_1, \ldots, m_\nff$ of the system, we obtain the Taylor series \[S^\infty_1, \ldots, S^\infty_\nff\in \R_0[[l,j]]/(2\pi l\, \Z)\] corresponding to each singularity.

\begin{definition}
The Taylor series invariant associated to the simple semitoric system $(M,\om,F)$ is the $\nff$-tuple of Taylor series $(S^\infty_1, \ldots, S^\infty_\nff)$.
\label{def:taylorinv}
\end{definition}

\begin{remark}\label{rmk:nonunique}
The function $S(z)$ is given in Equation~\eqref{eqn:def-of-S}, but the function $S(z)$ is still 
not uniquely defined because it depends on certain choices encoded in the function $I(z)$.
First of all, $I(z)$ depends on a choice of complementary cycle $\gamma^z_2$ chosen so that $\{\gamma_L^z, \gamma_2^z\}$ generates the fundamental group of the torus $\Lambda_z$. Such a choice is not unique and changing the choice of $\gamma_2^z$ will change $S(z)$ by an integer multiple of $2\pi l$.
 This dependence of the function $S(z)$ on a choice of basis of the fundamental group of $\Lambda_z$ is related to a geometric interpretation of the twisting index, see Section~\ref{sec:geometricinterpret}.
 Furthermore, $S(z)$ also depends on the choice of a chart for the local normal form from Theorem~\ref{EliassonMZ}, and different choices of such charts change $I(z)$, and in turn $S(z)$, by adding on a function for which all derivatives vanish at $z=0$. Such functions are called \emph{flat functions}. This is why $S(z)$ is not well-defined, and instead we have to take its Taylor series $S^\infty$ which is invariant under the addition of flat functions.
\end{remark}

Following \vungoc~\cite{VuNgoc03},
the Taylor series $S^\infty$ associated with a focus-focus singularity is the only semiglobal invariant of a focus-focus fiber. That is, if two systems have a focus-focus singularity with the same Taylor series $S^\infty$, then there exists an isomorphism of integrable systems between semilocal neighborhoods of the respective singular fibers. Moreover,  every power series in two real variables $l,j$ with no constant term and with the linear term in $l$ lying in $[-\tfrac{\pi}{2},\tfrac{3\pi}{2}[$, appears as the Taylor series invariant of a focus-focus point.
Putting these two facts together, we obtain that the map from semilocal neighborhoods of focus-focus points up to isomorphisms to $\R_0[[l,j]]/(2\pi l \,\Z)$ is a bijection.

\begin{figure}[ht]
\includegraphics[trim=0 50 0 30,width=250pt]{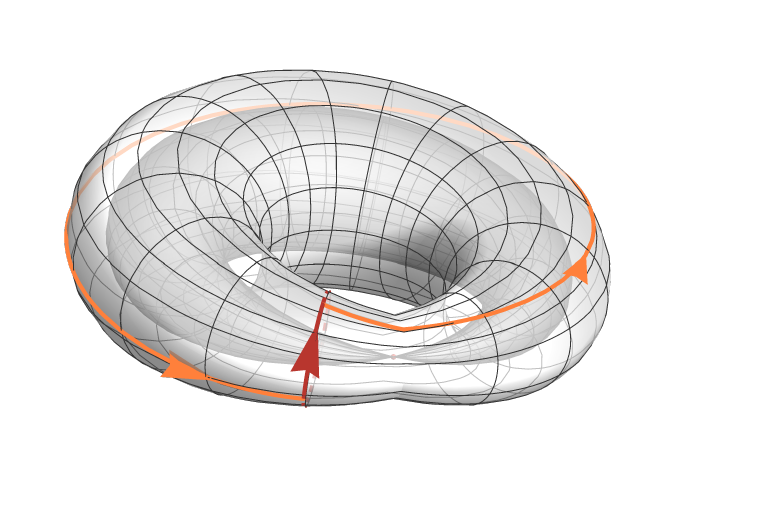}
\caption{The red curve is along the Hamiltonian flow of the $\mbS^1$-action generated by $L$ and the orange curve is along the Hamiltonian flow of $\Phi_2$. The value of $\tau_1(z)$ is the time spent along the flow of the red curve and $\tau_2(z)$ represents the time spent along the flow of the orange curve.}
\label{fig:torustorus}
\end{figure}

An alternative interpretation of the Taylor series invariant which does not directly make use of the function $I(z)$, also due to \vungoc\ \cite{VuNgoc03}, is as follows. Let $z\in\Phi(W)$ and let $p \in \Lam_z= \Phi^{-1}(z)$ be any point. Consider the closed curve constructed by following the Hamiltonian flow of $\mcX_{\Phi_2}$ until the $L$-orbit of $p$ is reached and then following the Hamiltonian flow of $\mcX_L$ to go back to $p$, see Figure \ref{fig:torustorus}. Let $\tau_1(z) \in \R/2\pi\Z$ be the time spent along the flow of $\mcX_L$ and $\tau_2(z) >0$ be the time spent along the flow of $\mcX_{\Phi_2}$. The functions $\tau_1(z)$ and $\tau_2(z)$ are independent of the choice of $p$.
Letting $\log$ be any choice of determination of the complex logarithm and taking a choice of lift of $\tau_1(z)$ to $\R$ discontinuous along the same branch cut, the functions
\begin{equation}
\begin{cases}
\sigma_1(z):= \tau_1(z) - \text{Im}(\log z), \\
\sigma_2(z):= \tau_2(z) + \text{Re}(\log z) 
\end{cases}
\label{eq:sigmas}
\end{equation} extend to smooth single-valued functions around the origin. 
Similar to Equation~\eqref{eqn:def-of-S}, subtracting logarithms is necessary to compensate for the discontinuity in the lift of $\tau_1$, which is unavoidable because of the monodromy, and for the rate at which $\tau_2(z)$ diverges to infinity as $z$ approaches the origin.
Moreover, the 1-form $\sigma := \sigma_1 \dee l + \sigma_2 \dee j$ is closed and therefore exact. Given such a $\sigma$, we take the function $S$ to be the unique smooth function that satisfies $\dee S=\sigma$ and $S(0)=0$. As before, there are choices encoded in $\sigma$, such as the choice of lift of $\tau_1(z)$, but the Taylor series of $S$ is well defined as an element of $\R[[l,j]]/(2\pi l\,\Z)$.


\subsection{The twisting index invariant}
\label{sss:twisting}
In this section, we summarize the construction of the twisting index invariant due to Pelayo \& \vungoc~\cite{PV2}.
Roughly speaking, the twisting index invariant is a label of $\nff$ integers assigned to each of the polygons $\De$ of the polygon invariant.
These integers will be obtained by comparing the semiglobal action coordinates from Equation~\eqref{eq:semigact} with the generalized toric momentum map $\mu$ of Section \ref{sss:polygon} around each focus-focus point.
We explain the details now.

Near each focus-focus singularity $m_r$, where $r\in \{1, \ldots, \nff\}$, we now describe how to construct a so-called \emph{privileged momentum map}.
Fix a choice of signs $\varepsilon = (\varepsilon_1,\ldots,\varepsilon_{n_{\text{FF}}}) \in (\Z_2)^{n_{\text{FF}}}$ and let $\tilde{W}:=F^{-1}\big(F(W)\backslash b^\varepsilon\big)$ be the set $W$ without the preimage of the cuts.
Recall the map $\Phi= (L,\Phi_2): W \to \R^2$ introduced in Section \ref{sss:taylor}. Let $\tp\colon \Phi(W)\to\R$ be the lift of $\tau_1$ to $\R$ that is continuous on $\Phi(\tilde{W})$ and also satisfies the condition that if we take $\ssp$ to be defined by Equation~\eqref{eq:sigmas} using $\tp$ then \begin{equation}\label{eqn:pref-tau1}
\ssp(0) =   \lim_{z\to 0} \Big(\tp(z) - \text{Im}(\log(z))\Big)  \in [-\tfrac{\pi}{2} ,\tfrac{3\pi}{2}[.
\end{equation}
Note that changing the choice of lift will change $\sigma_1(0)$ by integer multiples of $2\pi$, so such a $\tp$ always exists.

We define now the vector field
\begin{equation}
2 \pi \mcX^r := (\tp \circ \Phi) \mcX_L + (\tau_2 \circ \Phi) \mcX_{\Phi_2},
\label{eqHamvf}
\end{equation}which
 is smooth on $\tilde{W}$ and, by Pelayo \& V\~{u} Ng\d{o}c~\cite{PVNinventiones,PVNacta}, it turns out that there is a unique continuous function 
 \begin{equation}\label{eqn:Xi}
  \Xi_r:W \to \R
 \end{equation} which satisfies:
\begin{itemize}
 \item the function is smooth on $\tilde{W}$,
 \item the Hamiltonian vector field  of $\Xi_r$ on $\tilde{W}$ is $\mcX^r$,
 \item $\Xi_r(p)$ tends to $0$ as $p$ approaches $ m$.
\end{itemize}
The \emph{privileged momentum map at $m_r$} is defined by $\nu_r :=(L,\Xi_r)\colon W\to\R^2$ and it is smooth on $\tilde{W}$.

Let $f_\varepsilon$ be a choice of cartographic homeomorphism. Recall $\mu = f_\varepsilon\circ F$ and $\De = \mu (M)$.
Both $\mu$ and $\nu_r$ are continuous on $W$
and smooth on $\tilde{W}$, they both have $L$ as their first component, and on $\tilde{W}$ they both generate an effective Hamiltonian $2$-torus action.
Thus, there exists $\kappa_r^\Delta\in\Z$ such that $\mu$ and $\nu$ are related via 
\begin{equation}\label{eqn:twist-def}
\mu = T^{\kappa_r^\Delta} \circ \nu_r,
\end{equation}
where $T$ is as in Equation~\eqref{eqn:T}. The integer $\kappa_r^\Delta \in \Z$ is called the \emph{twisting index} of $\De$ at the focus-focus value $c_r$. Note that the integer $\kappa_r^\Delta$ depends on the choice of the cartographic homeomorphism $f_\varepsilon$, so the assignment of the integers $\kappa_r^\Delta$ differs for each representative of the semitoric polygon. Given any such set of integer labels, the labels
on the other representatives of the semitoric polygon are obtained via the following group action:
\begin{equation}
(\varepsilon', T^n) \cdot \Big(\De, (b_{\lam_r})_{r=1}^\nff,(\varepsilon_r)_{r=1}^\nff,(\kappa_r^\Delta)_{r=1}^\nff\Big)\, = \left(t_u(T^n(\De)), (b_{\lam_r})_{r=1}^\nff,(\varepsilon_r'\varepsilon_r)_{r=1}^\nff,\left(\kappa_r^\Delta+n+\sum_{i=1}^{r}u_i\right)_{r=1}^\nff\right),
\label{eq:twisact}
\end{equation} for $(\varepsilon',T^n) \in (\Z_2)^\nff \times \mcG$ where again $u_r = \frac{1}{2}(\varepsilon_r - \varepsilon_r \varepsilon_r')$.

\begin{remark}\label{rmk:twist-changes}
There are two differences between Equation~\eqref{eq:twisact} and the original equation for this group action contained in Pelayo \& \vungoc~\cite{PVNinventiones}, both contained in the extra term $\sum_{i=1}^{r}u_i$ which does not appear in Pelayo \& \vungoc~\cite{PVNinventiones}.
\begin{itemize} 
 \item The first $r-1$ terms of the sum, $u_1+\ldots+u_{r-1}$, had to be added to this group action for the following reason: 
 	changing the cut direction at any focus-focus point with smaller $x$-component than $m_r$ will have the effect of applying $T$ to the generalized toric momentum map near $m_r$, but will not change the privileged momentum map. 
 	Therefore, such a change in cut direction changes the twisting index; see Example~\ref{ex:twist-change} for an explicit example of this.
 	Since the twisting index of a system with at least two focus-focus points had never been computed until the present paper, this oversight has no impact on the existing literature. 
 	This was noticed independently by both Yohann Le Floch and the authors of the present paper. 
 \item The last term $u_r$ of the summation comes from a change in convention compared to Pelayo \& \vungoc~\cite{PVNinventiones}, which is independent of the previous item. In the original formulation, the definition of the preferred momentum map near each focus-focus point was slightly different depending on whether the corresponding cut was upwards or downwards. Specifically, they take the argument of $\tau_1$ to be in different intervals depending on whether the corresponding cut was up or down, taking it in $[0,2\pi\,[$ if the cut was up and $[\pi,3\pi\,[$ if the cut was down.
 In this paper, as suggested to us by Yohann Le Floch, we have opted to take a more unified approach, and the construction of $\nu$ presented in the current section does not depend on the cut direction, which comes at the cost of the appearance of the term $u_r$. 
 \end{itemize}
 \end{remark}

\begin{definition}
The \emph{twisting index invariant} of the simple semitoric system $(M,\om,F)$ is the $(\Z_2)^\nff \times \mcG$-orbit of the quadruple $\left(\De, (b_{\lam_r})_{r=1}^\nff,(\varepsilon_r)_{r=1}^\nff,(\kappa_r^\Delta)_{r=1}^\nff\right)$ constructed as above with group action as given in Equation~\eqref{eq:twisact}.
\end{definition}

Thus, the twisting index can be thought of as an assignment of an integer to each focus-focus point
for each choice of semitoric polygon.
For $r\in\{1,\ldots, n_{\text{FF}}\}$, given a representative of the semitoric polygon $\De$, we call the associated $\kappa_r^\Delta$ the \emph{twisting index of $m_r$
relative to $\De$}.

\begin{remark}\label{rmk:Xi-and-xi} 
Notice that since the Hamiltonian vector field of $\Xi_r$ is $\mcX^r$, the function $\Xi_r$ can be written as $\Xi_r = \xi_r \circ \Phi$, where $\xi_r$ is determined by 
\begin{equation}
\label{eqn:xi_deriv}
	\frac{\partial \xi_r}{\partial l}(z) = \dfrac{1}{2\pi} \tp(z),\qquad \frac{\partial \xi_r}{\partial j}(z) = \dfrac{1}{2\pi} \tau_2(z)
\end{equation}
 and $\displaystyle \lim_{z \to 0} \xi_r (z) =0$.
\end{remark}

\begin{example}\label{ex:twist-change}
 Consider the two polygon representatives shown in Figure~\ref{fig:twist-change}, which differ by the action of $(\ep'=(-1,1), T^0)$, which changes the cut direction at the first focus-focus point $m_1$. Let $\mu$ be the generalized toric momentum map for the polygon on the left, and $\mu'$ be the generalized toric momentum map for the polygon on the right.
 Since $\mu' = t_{b_{\lam_1}}\circ \mu$, near the second focus-focus point $m_2$, the maps $\mu$ and $\mu'$ differ by translation and an application of $T$. Since the privileged momentum map $\nu_2$ does not depend on the choice of polygon, we conclude that $\mu = T^{\kappa_2^\Delta}\circ\nu_2$ implies that $\mu' = T^{\kappa_2^\Delta+1}\circ\nu_2$. Thus, we see that changing the cut direction at $m_1$ increases the twisting index at $m_2$ by 1.
\end{example}

\begin{figure}
\centering
    \begin{subfigure}[b]{0.35\textwidth}
    	\centering
        \includegraphics[width=0.7\textwidth]{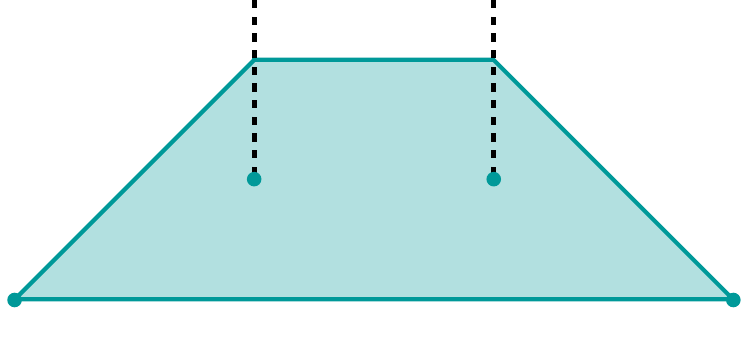}
        \caption*{}
    \end{subfigure}
    \begin{subfigure}[b]{0.35\textwidth}
    	\centering
        \includegraphics[width=0.7\textwidth]{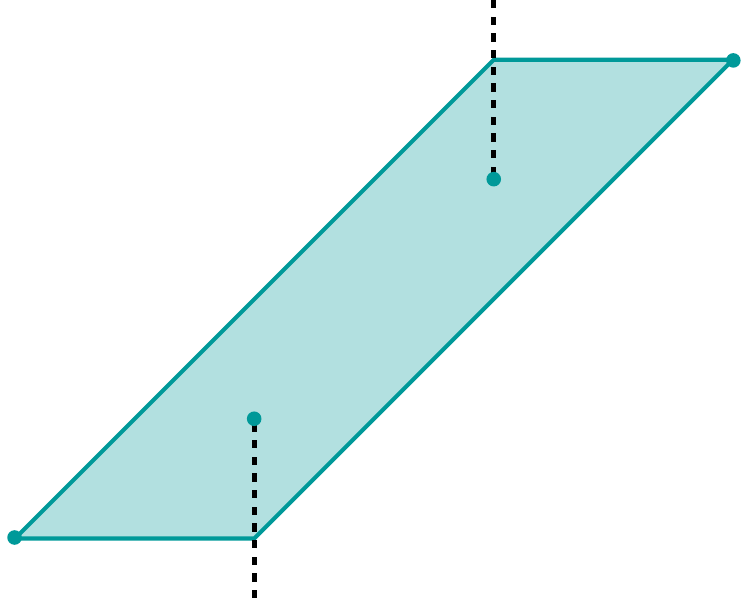}
        \caption*{}
    \end{subfigure}
    \caption{Two representatives of the same semitoric polygon related by changing the cut direction at the leftmost focus-focus point.}
    \label{fig:twist-change}
\end{figure}

\subsection{The classification}
Pelayo \& V\~{u} Ng\d{o}c~\cite{PVNinventiones} described how to obtain the five invariants from a given simple semitoric system.
Then, in~\cite{PVNacta}, they also described exactly which data can appear as invariants of a semitoric system, and call the abstract list of such data a \emph{list of semitoric ingredients}.
Furthermore, they proved that these five invariants completely classify semitoric integrable systems:
\begin{theorem}[Pelayo $\&$ V\~{u} Ng\d{o}c~\cite{PVNinventiones, PVNacta}]\label{thm:PVN-classification}
\mbox{}
 \begin{enumerate}
  \item 
  Two simple semitoric systems are isomorphic if and only if they have the same five semitoric invariants.
  \item 
  Given a list of semitoric ingredients there exists a simple semitoric system which has those as its five invariants.
 \end{enumerate}
\end{theorem}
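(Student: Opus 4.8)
The plan is to split both the manifold and the problem into a ``semiglobal toric'' part lying over the regular and elliptic values and $\nff$ ``semilocal'' parts, one near each focus-focus fiber, and to show that the five invariants encode exactly the data needed to reassemble the system from these pieces. The ``only if'' half of the first statement is immediate: each invariant was set up in Sections~\ref{sss:nff}--\ref{sss:twisting} so as to depend only on the isomorphism class of $(M,\om,F)$. For the converse, suppose $(M,\om,F)$ and $(M',\om',F')$ share all five invariants. First I would use the number-of-focus-focus-points invariant to label the focus-focus points $m_1,\dots,m_\nff$ and $m'_1,\dots,m'_\nff$ compatibly, and choose cartographic homeomorphisms representing the \emph{same} element of the polygon invariant, so that the associated generalized toric momentum maps $\mu$ and $\mu'$ have the same image $\De$. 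On $M^\varepsilon := M\setminus F^{-1}(b^\varepsilon)$ and the corresponding subset of $M'$, the maps $\mu$ and $\mu'$ generate genuine effective Hamiltonian $2$-torus actions with moment image $\De$ minus its cuts; a Delzant-type uniqueness argument --- equivariant Darboux charts along preimages of the open faces, glued using the connectedness of fibers (\vungoc~\cite{VuNgoc07}) --- then produces an equivariant symplectomorphism $\Phi_0$ between these open sets with $\mu'\circ\Phi_0=\mu$.

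What remains is to extend $\Phi_0$ across each focus-focus fiber, and this is where the Taylor series, height, and twisting index enter. Fix $r$ and a saturated neighborhood $W$ of the fiber through $m_r$. By \vungoc's semilocal classification (\cite{VuNgoc03}, quoted after Definition~\ref{def:taylorinv}), the equality of Taylor series invariants $S^\infty_r=(S'_r)^\infty$ furnishes a semilocal isomorphism $\psi_r\colon W\to W'$. On the cut-open overlap $\widetilde W$ both $\psi_r$ and $\Phi_0$ are fiber-preserving symplectomorphisms intertwining the flow of $L$, hence their composite acts on action--angle coordinates as an element of $\mcV=\mcG\ltimes\R$, i.e.\ as some power $T^{k_r}$ followed by a translation by $v_r\in\R$. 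The key computation is that $v_r$ is the difference of heights $h_r-h'_r$ (via the symplectic-volume description of Section~\ref{sss:height}), and, since the privileged momentum map $\nu_r$ is intrinsic to the semilocal data, the defining relation~\eqref{eqn:twist-def} forces $k_r$ to be the difference of the two twisting indices relative to $\De$. When all five invariants agree, every $v_r$ and $k_r$ vanishes, so after correcting $\Phi_0$ by a fiber-preserving isotopy supported near each focus-focus fiber one obtains maps that agree on overlaps and patch to a global isomorphism $M\to M'$.

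For the second statement I would run this construction in reverse. Starting from the prescribed polygon $\De$, apply the Delzant construction to obtain a symplectic toric manifold, then at each marked vertical line perform a nodal-trade surgery (cf.\ Symington~\cite{Sy2003}) converting an elliptic-elliptic corner into a focus-focus fiber, with its height set to the prescribed $h_r$ and its semilocal model chosen to realize the prescribed Taylor series $S^\infty_r$ --- possible since every admissible power series occurs (again the fact after Definition~\ref{def:taylorinv}). The residual integer freedom in gluing each semilocal patch to the toric complement is then fixed so that the twisting index of the result relative to $\De$ equals the prescribed $\kappa^\Delta_r$. One finishes by verifying invariant-by-invariant that the system so built has the prescribed data, and by checking that the compatibility conditions defining a ``list of semitoric ingredients'' are exactly those making the construction go through and rendering it independent, up to isomorphism, of the auxiliary choices.

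The hard part, in both directions, is the bookkeeping forced by the monodromy around focus-focus points (Section~\ref{sss:polygon}): since no global action--angle chart exists, the semiglobal and semilocal pieces can only be compared after cutting, and the ``gluing element'' one extracts depends on the cut signs $\varepsilon$ and on the chosen polygon representative. Keeping exact track of how $\varepsilon$, the $T$-action, and the integers $\kappa^\Delta_r$ interact --- so that the patched isomorphism in the first statement is genuinely well defined and the assigned data in the second descends to the correct $(\Z_2)^\nff\times\mcG$-orbit --- is the technical heart of the argument, and is exactly the point at which the twisting index is indispensable. It is precisely this kind of accounting, carried out carefully, that exposes subtleties such as the term missing from the original change-of-polygon formula; cf.\ Equation~\eqref{eq:twisact} and Remark~\ref{rmk:twist-changes}.
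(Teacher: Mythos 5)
The paper itself offers no proof of Theorem~\ref{thm:PVN-classification}: it is quoted from Pelayo and \vungoc~\cite{PVNinventiones,PVNacta}, so the only meaningful comparison is with their original arguments. Your outline reproduces their architecture faithfully: uniqueness by splitting $M$ into the part fibered over regular and elliptic values, treated via a toric-type uniqueness statement for the generalized toric momentum map, and saturated neighborhoods of the focus-focus fibers, treated via \vungoc's semiglobal classification~\cite{VuNgoc03}, with the height invariant absorbing the translational ambiguity of the gluing and the twisting index absorbing the $T^{k}$ ambiguity; existence by gluing semilocal models realizing the prescribed Taylor series into the toric piece determined by the polygon, and checking the result only depends on the $(\Z_2)^\nff\times\mcG$-orbit. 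In that sense you have identified the correct strategy.

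What you have, however, is a road map rather than a proof: the clauses you dispatch in a line are where the two cited papers spend most of their effort. (i) The ``Delzant-type uniqueness argument'' on $M^\varepsilon$ is not an off-the-shelf Delzant theorem: $M^\varepsilon$ is noncompact, its momentum image is a polygon with the cuts removed and with fake corners along the cut lines, so one needs a uniqueness theorem for Lagrangian fibrations with elliptic corners over such bases. (ii) Making $\psi_r$ and $\Phi_0$ literally agree on the cut-open overlap --- not merely agree up to an element of $\mcV$ and an isotopy --- is the delicate point: it uses the flat-function freedom in the semilocal normal form and a flux-type argument, and one must verify that the ``fiber-preserving isotopy supported near each focus-focus fiber'' can be chosen symplectic and momentum-map preserving; this is precisely the step your sketch hides. (iii) For existence, a nodal trade in the sense of Symington~\cite{Sy2003} produces a focus-focus fiber but gives no control over its Taylor series; Pelayo and \vungoc\ instead glue in the explicit semilocal model realizing the prescribed series, and proving that the construction is well defined, independent of auxiliary choices, and actually realizes the prescribed heights and twisting indices is a substantial argument rather than a verification ``invariant-by-invariant.'' So your proposal is a correct reconstruction of the strategy of \cite{PVNinventiones,PVNacta}, but the steps you defer are the theorem's actual content, not routine bookkeeping.
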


Theorem~\ref{thm:PVN-classification} thus implies that there is a natural bijection between isomorphism classes of simple semitoric systems and lists of semitoric ingredients.

\section{Geometric interpretations of twisting index}
\label{sec:geometricinterpret}

The primary goal of this section is to give an equivalent formulation of the twisting index in terms of comparing homology cycles near a focus-focus point to those which arise from a choice of semitoric polygon. 
The main results of this section are collected in Theorem~\ref{thm:geometry}.

\subsection{The local and semilocal maps}

First let us briefly review the relevant maps, as described in Section~\ref{ss:invariants}. Each focus-focus point $m_r$ of the system has a neighborhood $U_r$ which is isomorphic as an integrable system to a neighborhood $V_r$ of the origin in the system on $\R^4$ given by
\[
 G(q_1,q_2,p_1,p_2)=(q_1p_2-q_2p_1,q_1p_1+q_2p_2)
\]
via an isomorphism $(\phi_r,\varrho_r)$.
Let $W_r = F^{-1}(F(U_r))$. We can then define a map $\Phi_r\colon W_r\to\R^2$ via $\Phi_r = \varrho_r \circ F$, which is of the form $\Phi = (L,\Phi_2)$ and essentially amounts to extending the momentum maps of the local normal form for $m_r$ to a neighborhood of the fiber $F^{-1}(F(m_r))$.

Given a $z\in\Phi_r(W_r)$, we consider the fiber $\Lambda_z = \Phi_r^{-1}(z)$. An action variable $I_r$ is obtained by integrating a primitive $\varpi$ of $\omega$ along a cycle in $\Lambda_z$. There are choices of such cycles, and therefore various options for $I_r$. Any such $I_r$ induces a momentum map $\mu_r^{I_r}$. Due to the monodromy around the focus-focus value $F(m_r)$, such a $I_r$ and $\mu_r^{I_r}$ cannot be smoothly defined in all of $U_r$ or $W_r$, respectively.

There is a preferred choice of action and momentum map. These are the action $\xi_r$ and corresponding momentum map $\Xi_r$ discussed in Section~\ref{sss:twisting} and in particular Remark~\ref{rmk:Xi-and-xi}.

The local maps are shown in Figure~\ref{fig:diagram-local}, and the semilocal maps are shown in Figure~\ref{fig:diagram-semilocal}.

Later we will see that for the specific system we study in Section~\ref{sec:specificexample} there is a relationship between the map $\varrho_r$ and a quantity called imaginary action, see Remark~\ref{rmk:varrho_is_J}.

\begin{figure}
\centering
\begin{subfigure}[b]{.45\linewidth}
\centering
\[ \begin{tikzcd}
      M\arrow[r, phantom, sloped, "\supset"]&[-2em] U_r \arrow{r}{\phi_r} \arrow[d,"F"] \arrow[dr,"\Phi_r"]  &  V_r   \arrow[d,"G"] &[-2em] \R^4 \arrow[l,phantom,sloped,"\subset"]\\
      \R^2 \arrow[r, phantom, sloped, "\supset"] &F(U_r) \arrow{r}{\varrho_r}   &  \R^2 \\[-1em]
      & (l,h)\arrow[u, phantom, sloped, "\in"] & (l,j)\arrow[u, phantom, sloped, "\in"] &
\end{tikzcd}
\] 
\caption{The local maps defined in a neighborhood of $m_r$.}
\label{fig:diagram-local}
\end{subfigure} 
\hspace{20pt}
\begin{subfigure}[b]{.45\linewidth}
\centering
\[ \begin{tikzcd}
      M\arrow[r, phantom, sloped, "\supset"] &[-2em] W_r \arrow[bend left = 20, looseness = 1]{rrrd}{\mu_r^{I_r} := I\circ \Phi_r} \arrow[d,"F"] \arrow[dr,"\Phi_r"]  &  &[-2em] & \\
      \R^2 \arrow[r, phantom, sloped, "\supset"] &F(W_r) \arrow{r}{\varrho_r}   &  \R^2 \arrow[r, phantom, sloped, "\simeq"] & \C \arrow[r,"I_r"] & \R\\[-1em]
      & (l,h)\arrow[u, phantom, sloped, "\in"] & (l,j)\arrow[u, phantom, sloped, "\in"] & z \arrow[u, phantom, sloped, "\in"] &
\end{tikzcd}
\] 
\caption{The semilocal maps defined in a neighborhood of $F^{-1}(F(m_r))$.}
\label{fig:diagram-semilocal}
\end{subfigure}
 \caption{A review of the relevant maps for constructing the invariants. We typically use coordinates $(l,h)$ for the image of $F$ in $\R^2$, and coordinates $(l,j)$ for the image of $\Phi_r$ in $\R^2$, or $z = l + ij$ when viewing $\R^2$ as $\C$.}
\end{figure}

\subsection{Geometric interpretations}

Throughout this section we will often make use of the following fact:
any free continuous $\mbS^1$-action on a connected manifold $N$ determines a well-defined cycle in
$H_1(N)$ by taking the orbit of any point $p\in N$.
We denote the orbit through $p$ by $\mbS^1\cdot p$.
The resulting cycle is well-defined in $H_1(N)$ because if $p,q\in N$ then
any path $\gamma\colon [0,1]\to N$ with $\gamma(0)=p$ and $\gamma(1)=q$ determines a
homotopy between the orbit through $p$ and the orbit through $q$ by $\mbS^1\cdot (\gamma(s))$ for $0\leq s \leq 1$.

Let $(M,\om,F)$ be a simple semitoric system and let 
 $(\De,b,\varepsilon)$ be a representative of its semitoric polygon. 
Let $M^\varepsilon := M\setminus F^{-1}(b^\varepsilon)$ denote the manifold $M$ without 
the preimage of the cuts.
Recall, as in Section~\ref{sss:polygon}, that there is a generalized toric momentum map $\mu_\Delta=(L,\mu_2^\Delta)\colon M\to \R^2$ such that
\begin{itemize}
 \item $\mu_\Delta$ is continuous on $M$ and smooth on $M^\varepsilon$,
 \item $\mu_\Delta(M)=\De$,
 \item the Hamiltonian flows of $L$ and $\mu_2^\Delta$ generate an effective $\mathbb{T}^2$-action on $M^\varepsilon$.
\end{itemize}
Roughly speaking, the twisting index measures the difference between
$\mu_\Delta$ and the dynamics near a given focus-focus point, which can be expressed in several different, but equivalent, ways.

Throughout this section, let $m:=m_r$ be a focus-focus point, and we will drop the subscript $r$ from all notation.
Following the construction in Section~\ref{sss:taylor}, we obtain a neighborhood $W$ of the focus-focus fiber and
a map $\Phi\colon W \to \C$.
As before, let $\Lambda_z :=\Phi^{-1}(z)$, which is a torus for $z\neq 0$ and a pinched torus for $z=0$.
For $z\neq 0$, the Hamiltonian flow of $L$ generates a free $\mbS^1$-action on $\Lambda_z$, and therefore determines a cycle in $H_1(\Lambda_z)$ which we denote by $\gamma^z_L$.
Now consider $\tilde{W} = W\setminus F^{-1}(b^\varepsilon)$ and for $z\in\Phi(\tilde{W})$ denote
by $\gamma^z_\Delta$ the cycle determined by the flow of $\mu_2^{\Delta}$ on $\Lambda_z$. 
Notice that $\{\gamma_L^z,\gamma_\Delta^z\}$ form a basis of $\Lambda_z$ for any $z\in\Phi(\tilde{W})$.
Analogously to Equation~\eqref{eq:semigact},
we integrate the primitive $\varpi$ of $\omega$ over this cycle to determine an action $I_\Delta\colon \Phi(\tilde{W})\to\R$ via
\begin{equation}\label{eqn:I-Delta}
 I_\Delta(z) := \frac{1}{2\pi}\int_{\gamma_\Delta^z}\varpi.
\end{equation}
Note that $I_\Delta(z)$ is not defined when $z=0$, but the limit as $z$ goes to zero exists, so we denote $I_\Delta(0) := \lim_{z\to 0}I_\Delta(z)$.
Due to the discussion in Remark~\ref{rmk:integrate-cycles}, we have that $I_\Delta \circ \Phi=\mu_2^\Delta$.

Now we let
\begin{equation}\label{eqn:S-Delta}
S^\Delta(z) := 2\pi I_\Delta(z)-2\pi I_\Delta(0) + \text{Im}(z\log z - z) \in \R_0[[l,h]]
\end{equation}
and obtain its Taylor series $(S^\Delta)^\infty$.
Note that, in Section~\ref{sss:taylor}, the Taylor series $S^\infty$ defined via Equation~\eqref{eqn:def-of-S} is only
well-defined up to the addition of integer multiples of $2\pi l$.
On the other hand, in the present section the polygon $\De$ and associated generalized toric momentum map $\mu^\Delta$ give a \emph{preferred choice}
of cycle $\gamma_\Delta^z$ to use when computing $I_\Delta(z)$ in Equation~\eqref{eqn:I-Delta}.
Thus, the Taylor series $(S^\Delta)^\infty$ is completely
determined in the process described in this section.

We now want to compare this to another preferred Taylor series near the focus-focus point $m$, which is related to the preferred momentum map $\nu = (L,\Xi)$ from Section~\ref{sss:twisting}.
Let $\Xi\colon W\to\R$ be as in Equation~\eqref{eqn:Xi}, and as in Remark~\ref{rmk:Xi-and-xi}, there exists a map $\xi\colon \Phi(W)\to \R$ such that $\xi \circ\Phi = \Xi$.

\begin{lemma}\label{lem:Spref}
 Let $(S^{\text{pref}})^\infty\in\R_0[[l,j]]$ denote the representative of $S^\infty$ in which the coefficient of $l$ lies in $[-\frac{\pi}{2},\frac{3\pi}{2}[$. 
  Then $(S^{\text{pref}})^\infty$ is equal to the Taylor series of \[\hat{S}(z) := 2\pi \xi(z) - 2\pi \xi(0) + \textup{Im}(z \log z -z),\]
  where $\xi \circ\Phi = \Xi$ as in Remark~\ref{rmk:Xi-and-xi}.
\end{lemma}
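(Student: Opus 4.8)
The plan is to show that the function $\hat{S}$ defined via the preferred action $\xi$ satisfies the same two defining properties that characterize the preferred representative $(S^{\text{pref}})^\infty$: first, that its Taylor series is a legitimate representative of the Taylor series invariant $S^\infty$ (i.e.\ it differs from any admissible $S$ of Equation~\eqref{eqn:def-of-S} by an integer multiple of $2\pi l$ plus a flat function), and second, that the coefficient of $l$ in $\hat{S}^\infty$ lies in the normalization interval $[-\tfrac{\pi}{2},\tfrac{3\pi}{2}[$. Since $S^\infty$ has a unique representative in $\R_0[[l,j]]$ with this property, these two facts together force $\hat{S}^\infty = (S^{\text{pref}})^\infty$.

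For the first point, I would compare $\xi$ with a generic semiglobal action $I(z)$ from Equation~\eqref{eq:semigact}. Recall from Remark~\ref{rmk:Xi-and-xi} that $\xi$ is determined by $\partial_l \xi = \tfrac{1}{2\pi}\tp$ and $\partial_j \xi = \tfrac{1}{2\pi}\tau_2$, while by the alternative $\tau$-description of the Taylor series (Equation~\eqref{eq:sigmas} and the closed $1$-form $\sigma = \sigma_1\,\dee l + \sigma_2\,\dee j$ with $\dee S = \sigma$), one has $\partial_l S = \sigma_1 = \tau_1 - \mathrm{Im}(\log z)$ and $\partial_j S = \sigma_2 = \tau_2 + \mathrm{Re}(\log z)$. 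Now $\tp$ is by construction (Section~\ref{sss:twisting}) a lift of $\tau_1$ to $\R$ that is continuous on $\Phi(\tilde{W})$ with the branch cut along $b_\varepsilon$; hence $\tp$ and any lift $\tau_1$ used to define $S$ differ by a constant integer multiple of $2\pi$, say $\tp = \tau_1 + 2\pi\kappa$ for some $\kappa\in\Z$. Differentiating $\hat{S}(z) = 2\pi\xi(z) - 2\pi\xi(0) + \mathrm{Im}(z\log z - z)$ and using $\partial_l\,\mathrm{Im}(z\log z - z) = -\mathrm{Im}(\log z)$, $\partial_j\,\mathrm{Im}(z\log z - z) = \mathrm{Re}(\log z)$ (as in the derivation of Equation~\eqref{eqn:def-of-S}), gives $\partial_l \hat{S} = \tp - \mathrm{Im}(\log z) = \sigma_1 + 2\pi\kappa$ and $\partial_j \hat{S} = \tau_2 + \mathrm{Re}(\log z) = \sigma_2$. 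Therefore $\dee(\hat{S} - S) = 2\pi\kappa\,\dee l$, and since both $\hat{S}$ and $S$ vanish at the origin we get $\hat{S} = S + 2\pi\kappa\, l$ up to a flat function; passing to Taylor series, $\hat{S}^\infty = S^\infty + 2\pi\kappa\, l$ in $\R_0[[l,j]]$, so $\hat{S}^\infty$ is indeed a representative of the invariant $S^\infty \in \R_0[[l,j]]/(2\pi l\,\Z)$.

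For the second point, I would compute $\partial_l \hat{S}(0)$. By the computation above, $\partial_l \hat{S}(0) = \lim_{z\to 0}\bigl(\tp(z) - \mathrm{Im}(\log z)\bigr) = \ssp(0)$, the value of the preferred $\sigma_1$ at the origin. But the lift $\tp$ was chosen in Equation~\eqref{eqn:pref-tau1} precisely so that $\ssp(0) \in [-\tfrac{\pi}{2},\tfrac{3\pi}{2}[$. Hence the coefficient of $l$ in $\hat{S}^\infty$ lies in the required interval, which is exactly the normalization defining $(S^{\text{pref}})^\infty$. Combining the two points with uniqueness of the normalized representative completes the proof.

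The main obstacle I anticipate is purely bookkeeping: carefully matching the two parallel constructions of the Taylor series — the one through the action integral $I(z)$ in Equation~\eqref{eqn:def-of-S} and the one through the times $\tau_1,\tau_2$ in Equation~\eqref{eq:sigmas} — and verifying that $\xi$ (defined through $\tp,\tau_2$) genuinely plays the role of $I$ in the desingularized formula, including that the additive constant $\xi(0)$ is handled consistently so that $\hat{S}(0)=0$. Once the identification $\partial_l\hat{S} = \sigma_1^{\text{pref}}$, $\partial_j\hat{S} = \sigma_2$ is established, everything else is immediate from the definitions and the uniqueness statement already recorded in Section~\ref{sss:taylor}. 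No genuinely new idea beyond reconciling conventions is needed.
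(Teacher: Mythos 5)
Your proposal is correct and follows essentially the same route as the paper's proof: reduce the claim to the fact that the $l$-coefficient of the Taylor series of $\hat{S}$ equals $\lim_{z\to 0}\bigl(\tp(z)-\mathrm{Im}(\log z)\bigr)=\ssp(0)\in[-\frac{\pi}{2},\frac{3\pi}{2}[$ by \eqref{eqn:pref-tau1}, after noting that $(\hat S)^\infty$ and $(S^{\text{pref}})^\infty$ differ by an element of $2\pi l\,\Z$; your explicit verification of that last point (comparing $\dee\hat S$ with $\sigma$ and the lift $\tp=\tau_1+2\pi\kappa$) is merely asserted in the paper, so it is a welcome elaboration rather than a different method. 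One bookkeeping caveat: since $\frac{\dee}{\dee z}(z\log z-z)=\log z$, the direct computation gives $\partial_l\,\mathrm{Im}(z\log z-z)=+\mathrm{Im}(\log z)$ (as the paper's proof states), not $-\mathrm{Im}(\log z)$ as you write; the paper's own display then inserts a compensating minus, so both you and the paper arrive at the same final identity $\partial_l\hat S=\tp-\mathrm{Im}(\log z)$, which is the one consistent with \eqref{eq:sigmas} and \eqref{eqn:pref-tau1}, but the sign should properly be traced to the orientation conventions for $\tau_1$ and the cycle $\gamma_2^z$ rather than to that intermediate derivative identity.
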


\begin{proof}
 Let $(\hat{S})^\infty$ denote the Taylor series of $\hat{S}$ at the origin.  Then $(S^{\text{pref}})^\infty, (\hat{S})^{\infty}\in\R[[l,j]]$ are representatives of the Taylor series $S^\infty\in\R[[l,j]]/(2\pi l\Z)$, so
 \[
  (S^{\text{pref}})^\infty - (\hat{S})^\infty \in 2\pi l \Z.
 \]
 Thus, to show that $(S^{\text{pref}})^\infty = (\hat{S})^\infty$, and therefore complete the proof, it is sufficient to show that the coefficient of $l$ in the series $(\hat{S})^\infty$ lies in the interval $[\frac{-\pi}{2},\frac{3\pi}{2}[$.
 Recall from \eqref{eqn:xi_deriv} that $\frac{\partial \xi}{\partial l}= \frac{1}{2\pi} \tp$. By direct calculation from $z =l+ij$,
it can be shown that $\frac{\partial }{\partial l}(\text{Im}(z\log(z)-z))=\text{Im}(\log(z))$. 
 Now we compute
 \begin{align*}
 \frac{\partial }{\partial l}\hat{S}(z) &= \frac{\partial}{\partial l}\left(2\pi\xi(z) - 2\pi \xi(0) + \text{Im}(z\log(z)-z)\right)\\
 &= \tp(z) - \frac{\partial}{\partial l}\big( \text{Im}(z\log(z)-z)\big)\\
 &= \tp(z) - \text{Im}(\log(z)).
\end{align*}
Since the $l$ coefficient of the Taylor series $(\hat{S})^\infty$ is given by $\frac{\partial\hat(S)}{\partial l}|_{z=0}$, by \eqref{eqn:pref-tau1}, the proof is complete.
\end{proof}

\begin{lemma}\label{lem:twist-action}
The functions $I_\Delta(z)$ and $\xi(z)$, wherever both are defined, are related by 
$$I_\Delta(z) - \xi(z) = \kappa^\Delta l,$$
where $z = l + ij$.
\end{lemma}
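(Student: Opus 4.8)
The plan is to recall the definition of the twisting index in Equation~\eqref{eqn:twist-def}, namely $\mu_\Delta = T^{\kappa^\Delta}\circ\nu$, and to read off what this means for the second components of these momentum maps. Writing $\mu_\Delta = (L,\mu_2^\Delta)$ and $\nu = (L,\Xi)$, and recalling that $T = \begin{pmatrix} 1 & 0 \\ 1 & 1\end{pmatrix}$ so that $T^{\kappa^\Delta} = \begin{pmatrix} 1 & 0 \\ \kappa^\Delta & 1\end{pmatrix}$, the matrix identity $\mu_\Delta = T^{\kappa^\Delta}\circ\nu$ unpacks (on the overlap $\tilde W$ where both are smooth) to
\[
 L = L, \qquad \mu_2^\Delta = \kappa^\Delta L + \Xi.
\]
Since $\mu_2^\Delta = I_\Delta\circ\Phi$ by Remark~\ref{rmk:integrate-cycles} (as noted just before Equation~\eqref{eqn:S-Delta}), and $\Xi = \xi\circ\Phi$ by Remark~\ref{rmk:Xi-and-xi}, and $L(z) = l$ in the coordinates $z = l+ij$ on $\Phi(W)$ by Equation~\eqref{eq:semigact}, composing the second identity with a local section of $\Phi$ gives exactly $I_\Delta(z) - \xi(z) = \kappa^\Delta l$, wherever both $I_\Delta$ and $\xi$ are defined, which is the claim.

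First I would state that $\mu_\Delta$ and $\nu$ agree with the maps appearing in Equation~\eqref{eqn:twist-def}: $\mu_\Delta$ is the generalized toric momentum map $\mu = f_\varepsilon\circ F$ associated to the chosen representative of the polygon, and $\nu$ is the privileged momentum map $\nu_r = (L,\Xi_r)$, with $r$ suppressed. Next I would write out the matrix product $T^{\kappa^\Delta}\circ\nu$ explicitly as above and extract the two scalar equations, emphasizing that the first is the trivial $L=L$ (consistent with both momentum maps having $L$ as first component) and the second carries all the content. Then I would substitute $\mu_2^\Delta = I_\Delta\circ\Phi$ and $\Xi = \xi\circ\Phi$ and cancel the common $\Phi$ (legitimate because $\Phi$ is surjective onto a neighborhood of the origin, so one may evaluate at points of $\Phi(\tilde W)$ and then pass to the closure where both sides extend continuously), arriving at the displayed equality in the variable $z$.

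The only mild subtlety — and it is not really an obstacle — is bookkeeping about where each object is defined: $I_\Delta$ and $\xi$ are both continuous on all of $\Phi(W)$ but only smooth on $\Phi(\tilde W)$, and Equation~\eqref{eqn:twist-def} is an identity of momentum maps on $\tilde W$ in the first place. So I would prove the identity on $\Phi(\tilde W)$ and then note that both sides are continuous and hence the equality persists on the common domain of definition of $I_\Delta$ and $\xi$, which is exactly the hypothesis ``wherever both are defined''. No genuine difficulty arises; the lemma is essentially a restatement of the definition of $\kappa^\Delta$ once one recognizes $I_\Delta$ and $\xi$ as the scalar potentials of $\mu_2^\Delta$ and $\Xi$ respectively. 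I would keep the proof to a few lines.
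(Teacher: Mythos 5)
Your proposal is correct and follows essentially the same route as the paper: the paper likewise notes that both $\mu_\Delta$ and $\nu$ have $L$ as first component, so $T^{\kappa^\Delta}\nu = \mu_\Delta$ is equivalent to $\kappa^\Delta L = \mu_2^\Delta - \Xi$, and then passes to $I_\Delta$ and $\xi$ via $\mu_2^\Delta = I_\Delta\circ\Phi$ and $\Xi = \xi\circ\Phi$. Your extra bookkeeping about $\tilde W$ versus $W$ and continuity is a harmless elaboration of the same two-line argument.
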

\begin{proof}
	The first component of both $\mu_\Delta$ and $\nu$ is $L$, and therefore $T^{\kappa^\Delta}\nu = \mu_\Delta$ is equivalent to $\kappa^\Delta L = \mu_2^\Delta-\Xi$. This implies the result.
\end{proof}

\begin{proposition}\label{prop:difference-Taylor}
Let $r\in\{1,\ldots, n_{\text{FF}}\}$. Write briefly $ m_r=m$ and drop below the subscript $r$ from all notation.
Fix a semitoric polygon $(\Delta,b, \varepsilon)$ of $(M,\om,F)$ and let:
\begin{itemize}
 \item $(S^\Delta)^\infty$ denote the Taylor series obtained from Equation~\eqref{eqn:S-Delta}, 
 \item $(S^{\text{pref}})^\infty$ denote the representative of $S^\infty$ in which the coefficient of $l$ lies in $[\-\frac{\pi}{2},\frac{3\pi}{2}[$,
 \item $\kappa^\Delta$ denote the twisting index of $m$ relative to the semitoric polygon representative
$\Delta$.
\end{itemize} 
Then $\kappa^\Delta = \frac{1}{2\pi l}\left((S^{\Delta})^\infty-(S^{\text{pref}})^\infty \right)$.
\end{proposition}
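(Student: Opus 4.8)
The plan is to reduce everything to Lemma~\ref{lem:twist-action} and Lemma~\ref{lem:Spref}, so that the proposition becomes an essentially formal consequence of the definitions of $S^\Delta$ and $\hat S$ together with the identity $I_\Delta - \xi = \kappa^\Delta l$.

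First I would recall the two relevant objects. By definition in Equation~\eqref{eqn:S-Delta},
\[
S^\Delta(z) = 2\pi I_\Delta(z) - 2\pi I_\Delta(0) + \mathrm{Im}(z\log z - z),
\]
while by Lemma~\ref{lem:Spref} the preferred Taylor series $(S^{\text{pref}})^\infty$ is the Taylor series of
\[
\hat S(z) = 2\pi \xi(z) - 2\pi \xi(0) + \mathrm{Im}(z\log z - z).
\]
Subtracting, the $\mathrm{Im}(z\log z - z)$ terms cancel exactly (not just to infinite order), and one gets
\[
S^\Delta(z) - \hat S(z) = 2\pi\bigl(I_\Delta(z) - \xi(z)\bigr) - 2\pi\bigl(I_\Delta(0) - \xi(0)\bigr).
\]
Now apply Lemma~\ref{lem:twist-action}: wherever both functions are defined, $I_\Delta(z) - \xi(z) = \kappa^\Delta l$ with $z = l+ij$. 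One subtlety to address is that $I_\Delta$ and $\xi$ are each only defined on $\Phi(\tilde W)$ (they have a branch cut along $b^\varepsilon$ because of monodromy), but their difference $\kappa^\Delta l$ is manifestly smooth across the cut, so $S^\Delta - \hat S$ extends to a smooth function on a full neighborhood of the origin, and the Taylor series manipulation is legitimate. Also at $z=0$ we have $l=0$, so $I_\Delta(0) - \xi(0) = \lim_{z\to 0}(I_\Delta(z)-\xi(z)) = 0$; hence the constant term drops out and
\[
S^\Delta(z) - \hat S(z) = 2\pi \kappa^\Delta l.
\]
Passing to Taylor series at the origin and using Lemma~\ref{lem:Spref} to identify $(\hat S)^\infty$ with $(S^{\text{pref}})^\infty$ gives $(S^\Delta)^\infty - (S^{\text{pref}})^\infty = 2\pi\kappa^\Delta l$, which rearranges to the claimed formula $\kappa^\Delta = \frac{1}{2\pi l}\bigl((S^\Delta)^\infty - (S^{\text{pref}})^\infty\bigr)$.

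I do not expect a serious obstacle here: the content has already been front-loaded into the two lemmas (the analytic input is the extendability of $S$ and the interval normalization of the $l$-coefficient, both handled in Lemma~\ref{lem:Spref}; the geometric/dynamical input is the relation between $\mu_\Delta$ and $\nu$, handled in Lemma~\ref{lem:twist-action}). The only point requiring a sentence of care is the bookkeeping around domains of definition and branch cuts — making clear that although $I_\Delta$, $\xi$, $S^\Delta$, $\hat S$ individually live only on the cut neighborhood $\Phi(\tilde W)$, the relevant \emph{differences} are smooth across the cut, so the equality of Taylor series at the origin is meaningful. If anything is "the hard part," it is merely being precise that the identity $I_\Delta - \xi = \kappa^\Delta l$ from Lemma~\ref{lem:twist-action} forces the $\mathrm{Im}(z\log z - z)$ contributions to cancel identically rather than only up to a flat function, which is what lets us get the clean statement rather than one modulo flat terms.
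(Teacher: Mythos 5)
Your argument is correct and follows essentially the same route as the paper: the paper's proof likewise combines the definition of $S^\Delta$ in Equation~\eqref{eqn:S-Delta}, Lemma~\ref{lem:Spref} identifying $(S^{\text{pref}})^\infty$ with the Taylor series of $\hat S$, and Lemma~\ref{lem:twist-action} to get $(S^\Delta)^\infty - (S^{\text{pref}})^\infty = 2\pi(I_\Delta-\xi)^\infty = 2\pi\kappa^\Delta l$. Your extra remarks on the cancellation of the constant term and the smoothness of the difference across the cut are sound and merely spell out what the paper leaves implicit.
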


\begin{proof}
Let $\mu_\De = (L, \mu_2^\Delta)$ be the generalized toric momentum map satisfying $\mu(M) = \De$, and let $I_\Delta\colon \Phi(\tilde{W})\to \R$ be as in Equation~\eqref{eqn:I-Delta}, so that $I_\Delta \circ \Phi = \mu_2^\Delta$.
Recall that, as in Section~\ref{sss:twisting}, near $m$ there is a preferred local momentum
map $\nu = (L,\Xi)$, which is smooth on $\tilde{W}$, such that
 $
  T^{\kappa^\Delta}\nu = \mu_\Delta.
 $
 Furthermore, as in Remark~\ref{rmk:Xi-and-xi}, there is a map $\xi\colon \Phi(W)\to\R$ such that $\xi\circ\Phi = \Xi$.

Using Lemma~\ref{lem:Spref}, Equation~\eqref{eqn:S-Delta}, and Lemma~\ref{lem:twist-action}, we conclude that
\[
 (S^\Delta)^\infty - (S^{pref})^\infty = 2\pi (I_\Delta - \xi)^\infty = 2\pi \kappa^\Delta l,
\]
where $(I_\Delta - \xi)^\infty$ denotes the Taylor series of $I_\Delta(z) - \xi(z)$ expanded at the origin.
\end{proof}

The result of Proposition~\ref{prop:difference-Taylor} is similar to how the twisting index was treated in~\cite{LFVN21,PPT,Jaume-thesis}, by packaging it along with the Taylor series.

Let $\gamma_{\text{pref}}^z\in H_1(\Lambda_z)$ be the cycle defined by following $\mathcal{X}_{\Phi_2}$ for time $\tau_2(z)$ and following $\mathcal{X}_L$ for time $\tp(z)$. 
In other words, $\gamma_\text{pref}^z$ is the piecewise smooth loop shown in Figure~\ref{fig:torustorus}.

Now we will show that $\gamma_\text{pref}^z$ is homotopic to $\gamma_{\Xi}^z$, which is the cycle determined by the flow of $\Xi$. For $s\in [0,1]$, define a vector field $\mathcal{X}(s)$ on $\Lambda_z$ by:
\[
 2\pi \mathcal{X}(s) = s(\tp\circ\Phi)\mathcal{X}_L + (\tau_2\circ\Phi)\mathcal{X}_{\Phi_2}.
\]
Let $\gamma^z(s)$ be the cycle determined by flowing along $\mathcal{X}(s)$ for time $2\pi$, and then flowing along $\mathcal{X}_L$ for time $(1-s)\tp(z)$. Then $\gamma^z(s)$ is a loop for all $s\in [0,1]$, $\gamma^z(0)=\gamma_\text{pref}^z$, and $\gamma(1) = \gamma^z_{\Xi}$.

Thus, applying this equivalence and Remark~\ref{rmk:integrate-cycles}, we have that
\begin{equation}\label{eqn:gamma-pref-Xi}
\frac{1}{2\pi} \int_{\gamma_\text{pref}^z}\varpi = \frac{1}{2\pi} \int_{\gamma_\Xi^z}\varpi = \xi(z).
\end{equation}

\begin{remark} \label{re:xi_as_action}
Because of \eqref{eqn:gamma-pref-Xi}, the function $\xi(z)$ can be understood as a \emph{preferred local action} around the focus-focus value.	
\end{remark}

\begin{proposition}\label{prop:cycles}
 $\gamma_\Delta^z - \gamma_\text{pref}^z = \kappa^\Delta \gamma_L$.
\end{proposition}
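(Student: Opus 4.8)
The plan is to deduce the statement directly from Lemma~\ref{lem:twist-action} together with the integral formulas for the actions associated to the relevant cycles. The key observation is that $\{\gamma_L^z, \gamma_\Delta^z\}$ and $\{\gamma_L^z, \gamma_\text{pref}^z\}$ are each bases of $H_1(\Lambda_z)$ for $z \in \Phi(\tilde W)$, and $\gamma_\text{pref}^z$ is homotopic to $\gamma_\Xi^z$, the cycle determined by the flow of $\Xi$ (as established in the paragraph preceding Equation~\eqref{eqn:gamma-pref-Xi}). Since both pairs are bases and both contain $\gamma_L^z$, there is some integer $k(z)$ with $\gamma_\Delta^z - \gamma_\text{pref}^z = k(z)\,\gamma_L^z$ in $H_1(\Lambda_z)$; because the cycles vary continuously in $z$ and $H_1(\Lambda_z) \cong \Z^2$ is discrete, $k(z)$ is locally constant, hence a fixed integer $k$ on the connected set $\Phi(\tilde W)$.

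To identify $k$ with $\kappa^\Delta$, I would integrate the primitive $\varpi$ of $\om|_W$ over both sides. Using Equation~\eqref{eqn:I-Delta} for the left-hand cycle and Equation~\eqref{eqn:gamma-pref-Xi} for $\gamma_\text{pref}^z$, together with $\frac{1}{2\pi}\int_{\gamma_L^z}\varpi = l$ from Equation~\eqref{eq:semigact}, we get
\[
 I_\Delta(z) - \xi(z) = \frac{1}{2\pi}\int_{\gamma_\Delta^z}\varpi - \frac{1}{2\pi}\int_{\gamma_\text{pref}^z}\varpi = \frac{k}{2\pi}\int_{\gamma_L^z}\varpi = k\, l,
\]
valid wherever both sides are defined. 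Comparing with Lemma~\ref{lem:twist-action}, which gives $I_\Delta(z) - \xi(z) = \kappa^\Delta l$, and using that $l$ is not identically zero on $\Phi(\tilde W)$, we conclude $k = \kappa^\Delta$, which is the desired identity $\gamma_\Delta^z - \gamma_\text{pref}^z = \kappa^\Delta \gamma_L^z$.

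I expect the only subtlety — and the main point requiring care rather than a genuine obstacle — to be justifying that the integer $k$ detected by integrating $\varpi$ is the \emph{same} integer as the one appearing in the homology relation, i.e.\ that $\varpi$ pairs faithfully enough with $H_1(\Lambda_z)$ to pin down the coefficient. This is handled by noting that the action coordinates $(L, I_\Delta)$ and $(L, \xi)$ arising from the two bases are, by the Liouville--Arnold--Mineur theorem (Theorem~\ref{thm:action-angle}) and Remark~\ref{rmk:integrate-cycles}, genuine action variables, so the period map is an isomorphism onto its image; alternatively one simply observes that since $k$ is already known to be an integer from the homology argument, the real equation $I_\Delta(z) - \xi(z) = k l = \kappa^\Delta l$ forces $k = \kappa^\Delta$ exactly. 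This makes the proof short, essentially a bookkeeping argument assembling Lemma~\ref{lem:twist-action}, Equation~\eqref{eqn:gamma-pref-Xi}, and Remark~\ref{rmk:integrate-cycles}.
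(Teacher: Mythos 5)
Your proof is correct and follows essentially the same route as the paper's: express $\gamma_\Delta^z - \gamma_\text{pref}^z$ as an integer multiple of $\gamma_L^z$ using that both pairs are bases of $H_1(\Lambda_z)$, then integrate $\varpi$ (via Equation~\eqref{eqn:I-Delta}, Equation~\eqref{eqn:gamma-pref-Xi}, and Remark~\ref{rmk:integrate-cycles}) and identify the coefficient with $\kappa^\Delta$ by Lemma~\ref{lem:twist-action}. Your extra remarks on the local constancy of $k(z)$ are fine but not needed beyond what the paper states.
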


\begin{proof}
 Since $\{\gamma_L^z, \gamma_\Delta^z\}$ and $\{\gamma_L^z, \gamma_\text{pref}^z\}$ are each a basis of $H_1(\Lambda_z)\cong \Z^2$, there exists $K \in\Z$ such that $\gamma_\Delta^z - \gamma_\text{pref}^z = K \gamma_L$.
 By Equation~\eqref{eqn:gamma-pref-Xi},  $\gamma_\Delta^z - \gamma_\tau^z = \ell \gamma_L^z$ implies that 
\[
 I_\Delta(z) - \xi(z) = \frac{1}{2\pi} \int_{\gamma_\Delta^z} \varpi - \frac{1}{2\pi} \int_{\gamma_\text{pref}^z} \varpi = \frac{1}{2\pi} \int_{\gamma_\Delta^z-\gamma_\text{pref}^z} \varpi = \frac{1}{2\pi} \int_{K \gamma_L^z} \varpi = K l
\]
again applying Remark~\ref{rmk:integrate-cycles} in the last equality, where $z = l+ij$.
Hence, $I_\Delta(z) - \xi(z) = K l$, so $K = \kappa^\Delta$ by Lemma~\ref{lem:twist-action}.
\end{proof}

\begin{figure}[h]
 \centering
 \input{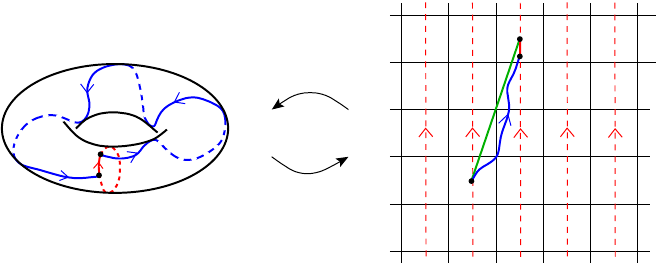_t}
 \caption{A lift to the universal cover (displayed on the right) of the concatenated flows of $\mcX_L$ and $\mcX_{\Phi_2}$ (displayed on the left). The point $\tilde{x}$ is the corresponding lift of $x$. In the set-up here, the lifted flow of $\mcX_L$ is moving vertically. For comparison, on the universal cover, we also show $\gamma_\Xi$ (green), which is a straight line.}
 \label{lifting}
\end{figure}

\section{A two-parameter family with two focus-focus points}
\label{sec:specificexample}

In this section, we compute the symplectic invariants of the  family of simple semitoric systems given in Equation \eqref{eqn_ssys}, which can have up to two focus-focus singularities. When this is the case, the Taylor series invariant (Section \ref{sss:taylor}), the height invariant (Section \ref{sss:height}) and the twisting index invariant (Section \ref{sss:twisting}) each have two components.

\begin{remark}
The one-parameter family of systems given in Equation~\eqref{eqn_ssys} is a special case of 
the four-parameter family of systems studied by Alonso \& Hohloch \cite{AH2}, which in turn is a special
case of the broad six-parameter family of systems (sometimes called {\it generalized coupled angular momenta}) studied by Hohloch \& Palmer \cite{HoPa2018}.
In particular, 
the system in Equation~\eqref{eqn_ssys} is a particular case of the family of systems studied 
in Hohloch \& Palmer \cite{HoPa2018} with parameters
$$ \begin{aligned}
R_1=1,&\hspace{1.5cm}&&t_1 = (1-s), &\hspace{1.5cm}& t_3 = 2s(1 - s), \\
R_2=2,&&&t_2 = s,  && t_4 =0,
\end{aligned} $$
and of the family of systems studied 
in Alonso \& Hohloch \cite{AH2} with parameters 
$$ R_1 = 1,\qquad R_2=2, \qquad s_1 = 0, \qquad s_2=s.$$
\label{re:otherpapers}
\end{remark}

Since the system in Equation~\eqref{eqn_ssys} is a special case of the system from
Hohloch \& Palmer \cite{HoPa2018}, then as a consequence of 
Hohloch \& Palmer \cite[Theorem 1.1]{HoPa2018}, for any $s\in [0,1]$ the 
%
system in equation \eqref{eqn_ssys} is a completely integrable system with four singularities of rank 0, located at the product of the poles of the spheres. We will denote these points by $\mathcal{N} \times \mathcal{N}$, $\mathcal{N} \times \mathcal{S}$, $\mathcal{S} \times \mathcal{N}$ and $\mathcal{S} \times \mathcal{S}$, where $\mathcal{N},\mathcal{S}$ denote the North and South poles respectively.

As a consequence of Alonso $\&$ Hohloch \cite[\textit{Prop. 8, Prop. 9, Thm. 11}]{AH2}, we have:
\begin{proposition}
\label{splusmin}
The system \eqref{eqn_ssys} is semitoric for all values of $s \in [0,1] \backslash \{s_-, s_+\}$, where
\begin{equation*}
s_+ = \dfrac{1}{16}\left( 8 - 3 \sqrt{2} + \sqrt{82 + 16 \sqrt{2}} \right), \qquad s_- = \dfrac{1}{16}\left( 8 + 3 \sqrt{2} - \sqrt{82 - 16 \sqrt{2}} \right).
\end{equation*} The points $\mathcal{N} \times \mathcal{S}$ and $\mathcal{S} \times \mathcal{N}$ are focus-focus if $s_- < s < s_+$ and elliptic-elliptic if $s<s_-$ or $s>s_+$. The points $\mathcal{N} \times \mathcal{N}$ and $\mathcal{S} \times \mathcal{S}$ are always elliptic-elliptic. This situation is displayed in Figure~\ref{fig:momimg}.
\label{prop:nff}
\end{proposition}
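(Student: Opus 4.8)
The plan is to deduce the statement directly from the cited results of Alonso \& Hohloch~\cite[Prop.~8, Prop.~9, Thm.~11]{AH2} by specializing their parameters according to Remark~\ref{re:otherpapers}. First I would recall that in~\cite{AH2} the authors study the four-parameter family with parameters $R_1,R_2,s_1,s_2$, and that by Remark~\ref{re:otherpapers} our system~\eqref{eqn_ssys} corresponds to the choice $R_1=1$, $R_2=2$, $s_1=0$, $s_2=s$. Substituting these values into their criterion for when the system fails to be semitoric (which amounts to a degeneracy condition on the singular points $\mathcal{N}\times\mathcal{S}$ and $\mathcal{S}\times\mathcal{N}$) produces a quadratic (or quartic, after clearing the elliptic-type discriminant) equation in $s$ whose two roots in $[0,1]$ are exactly $s_-$ and $s_+$ as displayed. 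I would carry out this substitution explicitly, simplify the resulting radical expressions, and verify that the two solutions indeed lie in $[0,1]$ and have the stated closed form.

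Next I would address the classification of the four rank-$0$ singular points. By~\cite[Thm.~1.1]{HoPa2018}, for every $s\in[0,1]$ the system~\eqref{eqn_ssys} is completely integrable with exactly four rank-$0$ points, located at the four products of poles $\mathcal{N}\times\mathcal{N}$, $\mathcal{N}\times\mathcal{S}$, $\mathcal{S}\times\mathcal{N}$, $\mathcal{S}\times\mathcal{S}$. To determine the type of each, I would apply Lemma~\ref{nondegCriterium}: compute the Hessians $d^2L$ and $d^2H_s$ at each pole-pair (in suitable local Darboux coordinates on $\mathbb{S}^2\times\mathbb{S}^2$ near the poles), form the matrices $\om_p^{-1}d^2L(p)$ and $\om_p^{-1}d^2H_s(p)$, and examine the eigenvalues of a generic linear combination. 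For $\mathcal{N}\times\mathcal{N}$ and $\mathcal{S}\times\mathcal{S}$ the eigenvalues are purely imaginary for all $s$, giving elliptic-elliptic type; for $\mathcal{N}\times\mathcal{S}$ and $\mathcal{S}\times\mathcal{N}$ the nature of the eigenvalues changes — they are of focus-focus type precisely when a certain discriminant is negative, which translates into the condition $s_-<s<s_+$, and elliptic-elliptic otherwise. Since all of this is exactly what~\cite[Prop.~8, Prop.~9, Thm.~11]{AH2} establishes after the parameter specialization, the cleanest route is to invoke those results directly rather than redo the computation.

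The main obstacle is purely bookkeeping: one must carefully match the conventions (orientation, sign of the symplectic form, ordering of the factors, normalization of the sphere radii) between the present paper and~\cite{AH2, HoPa2018} so that the parameter dictionary in Remark~\ref{re:otherpapers} is applied correctly, and so that the closed-form expressions for $s_\pm$ come out with the stated signs and radicands. There is no deep difficulty here — once the dictionary is fixed, the proposition is an immediate corollary — but the algebraic simplification of the degeneracy condition into the displayed nested-radical form requires some care. Finally, I would note that the excluded values $s_\pm$ are precisely those at which the two relevant points undergo a Hamiltonian--Hopf-type transition and the non-degeneracy condition of Lemma~\ref{nondegCriterium} fails, so the system is genuinely not semitoric there, completing the proof.
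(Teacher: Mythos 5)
Your proposal matches the paper's treatment: the result is obtained exactly as you describe, by invoking Alonso \& Hohloch~\cite[Prop.~8, Prop.~9, Thm.~11]{AH2} (together with~\cite{HoPa2018} for integrability and the location of the rank-$0$ points) after specializing the parameters via the dictionary $R_1=1$, $R_2=2$, $s_1=0$, $s_2=s$ of Remark~\ref{re:otherpapers}; the paper gives no independent computation. Your additional sketch of verifying the point types with Lemma~\ref{nondegCriterium} is a reasonable elaboration but not part of the paper's argument, which rests entirely on the citation.
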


In Figure \ref{fig:momimg}, we can observe how the image of the momentum map $F$ evolves as we change the parameter $s$: two of the singular values move from the boundary to the interior and back to the boundary. This corresponds to the transition of the singular points from elliptic-elliptic to focus-focus and focus-focus to elliptic-elliptic respectively, which are Hamiltonian-Hopf bifurcations.

\begin{figure}
\includegraphics[width=430pt]{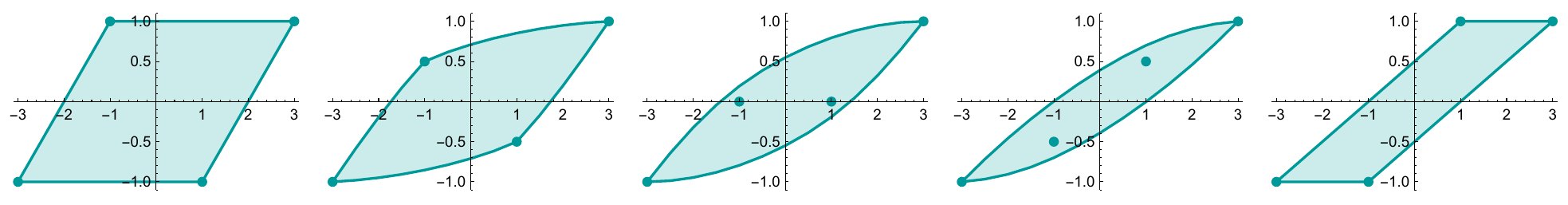}
\caption{
The image $B_s :=F_s(M)$ of the momentum map as the parameter moves from $s=0$ on the left to $s=1$ on the right. 
Notice that two of the singular points start as elliptic-elliptic at $s=0$, become
focus-focus at $s_- \approx 0.284$, and transition back into elliptic-elliptic  at $s_+ \approx 0.874$.}
\label{fig:momimg}
\end{figure}

\subsection{Polygon and height invariants}

The polygon invariant and the height invariant of the system in Equation \eqref{eqn_ssys} have been already calculated by Alonso \& Hohloch \cite{AH2}. In this subsection, we recall those results specialized to the parameter values we are interested in for this paper.

\begin{theorem}[{Alonso \& Hohloch \cite[Theorem 2]{AH2}}] 
\label{thm:poly-invt}The number of focus-focus points invariant and the polygon invariant of the system from Equation~\eqref{eqn_ssys} are as follows (see also Figure~\ref{fig:pols}):
\begin{itemize}
	\item For $s < s_-$, we have $\nff=0$ and the polygon invariant is the equivalence class of the polygon which is the convex hull of $(-3,-1)$, $(-1,1)$, $(1,-1)$, and $(3,1)$.
	\item For $s_- < s < s_+$, we have $\nff=2$ and the polygon invariant is the equivalence class of $\big(\De, (b_{\lam_r})_{r=1}^2,(\varepsilon_r)_{r=1}^2\big)$ where $\lam_1 = -1$, $\lam_2=1$, $\varepsilon_1 = 1$, $\varepsilon_2=1$, and $\De$ is the convex hull of $(-3,-1)$, $(-1,1)$, $(1,1)$, and $(3,-1)$.
	\item For $s > s_+$, we have $\nff=0$ and the polygon invariant is the equivalence class of the polygon which is the convex hull of $(-3,-1)$, $(-1,-1)$, $(1,1)$, and $(3,1)$.
\end{itemize}
\label{thm:poly}
\end{theorem}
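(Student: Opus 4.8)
Since the family \eqref{eqn_ssys} is the subfamily of the systems of \cite{AH2} singled out in Remark~\ref{re:otherpapers} (namely $R_1=1$, $R_2=2$, $s_1=0$, $s_2=s$), the statement is a specialisation of \cite[Theorem 2]{AH2}, and I would present it as such; here is the structure of the underlying argument. The polygon invariant is extracted from the momentum image $B_s:=F_s(M)$ together with the integral affine structure that the action coordinates of Theorem~\ref{thm:action-angle} induce on the set $B_{s,r}$ of regular values, via \vungoc's cartographic procedure (\cite{VuNgoc07}, recalled in Section~\ref{sss:polygon}): one deletes the vertical rays $b^\varepsilon$ emanating from the focus-focus values, picks a cartographic homeomorphism $f_\varepsilon$ straightening the integral affine structure on $B_{s,r}\setminus b^\varepsilon$, and records $\Delta=f_\varepsilon(B_s)$ modulo the equivalence of Definition~\ref{def:polygon}. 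The locations and types of the four rank-$0$ fixed points $\mathcal N\times\mathcal N$, $\mathcal N\times\mathcal S$, $\mathcal S\times\mathcal N$, $\mathcal S\times\mathcal S$ are supplied by Proposition~\ref{prop:nff}; since $L=z_1+2z_2$ and $z_i=\pm 1$ at the poles, their $L$-values are $3$, $-1$, $1$, $-3$, so the polygon always has horizontal extent $[-3,3]$ with distinguished abscissae at $-3,-1,1,3$, the inner two being precisely the abscissae $\lambda_1=-1$, $\lambda_2=1$ of the focus-focus values when $s\in\,]s_-,s_+[$.

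I would first dispose of the two ranges without focus-focus points. By Proposition~\ref{prop:nff}, for $s<s_-$ and for $s>s_+$ every rank-$0$ singularity is elliptic-elliptic and $\nff=0$, hence the system is toric and $\Delta$ is an honest Delzant polygon. Such a polygon is pinned down by the $\mbS^1$-data that does not depend on $s$ --- the image of $L$, the $L$-values of the four fixed points, and the symplectic volumes of the reduced spaces $L^{-1}(x)/\mbS^1$ --- together with the Delzant condition at the four corners; in particular the edge slopes are integers and so cannot change as $s$ varies within either interval, and it suffices to compute $\Delta$ at a single convenient value in each range. At $s=0$ the flows of $L$ and of $L-H_0=2z_2$ generate an explicit $\T^2$-action on $M$, so $(L,2z_2)$ (up to a $\mathrm{GL}(2,\Z)$-relabelling respecting the orientation of the second coordinate) is a toric momentum map and $\Delta$ is the image of a coordinate rectangle under a unimodular linear map; at $s=1$ one argues the same way using $z_1=L-2z_2$ as the complementary action. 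This gives the first and third bullets.

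For $s_-<s<s_+$ there are exactly two focus-focus points (Proposition~\ref{prop:nff}) and the straightening must be carried out honestly. The boundary of $B_s$ consists of elliptic-regular and elliptic-elliptic values, so near the boundary the integral affine structure is the standard corner model and $f_\varepsilon$ straightens the lower boundary to a horizontal segment; the only obstruction to a global straightening is the monodromy of the integral affine structure around each of the two focus-focus values, which by simplicity is conjugate to the matrix $T$ of \eqref{eqn:T}. Making the upward choice $\varepsilon_1=\varepsilon_2=+1$ for the cuts then forces the straightened upper boundary to acquire an upward kink above each of $x=-1$ and $x=1$, and matching the heights at the four corners with those dictated by the $\mbS^1$-data yields the trapezoidal representative of the middle bullet.

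The main obstacle is the explicit determination of the integral affine structure --- equivalently, of the action variables of \eqref{eqn:action-coords} --- away from the model regions near the boundary and the focus-focus fibres: for this family it amounts to evaluating period integrals of a primitive of $\om$ over cycles on the regular fibres, which reduce to elliptic-type integrals, and this is precisely the computation performed in \cite{AH2}. Alternatively, part of the work can be bypassed by invoking the semitoric-family machinery of Le Floch \& Palmer~\cite{LFP-fam1,LFP-fam2} to transport the polygon invariant across the Hamiltonian-Hopf bifurcations at $s_\pm$. In either approach one must keep careful track of the normalisation of $\om_{\mbS^2}$ and of the scaling of the second factor in \eqref{eqn_ssys}, as these fix the scale of the second coordinate of the polygon representative.
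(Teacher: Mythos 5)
Your proposal matches the paper's treatment: the paper does not prove this statement but simply imports it as \cite[Theorem 2]{AH2}, specialized to the parameter values of Remark~\ref{re:otherpapers}, which is exactly how you frame it. Your additional outline of the underlying argument (fixed-point data from Proposition~\ref{prop:nff}, the $L$-values $-3,-1,1,3$, the cartographic straightening with upward cuts, and the action/elliptic-integral computation carried out in \cite{AH2}) is consistent with that source and introduces no error, modulo the sign/orientation conventions you rightly flag as needing care.
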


\begin{figure}[ht]
\centering
    \begin{subfigure}[b]{0.4\textwidth}
        \includegraphics[width=.8\textwidth]{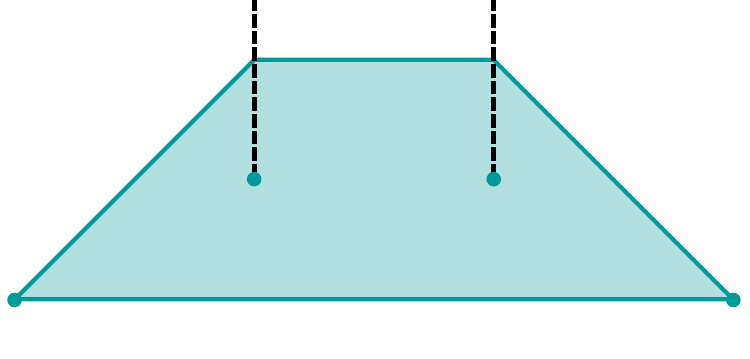}
        \caption{\small \textit{$(\varepsilon_1,\varepsilon_2)=(+1,+1)$}}
        \label{fig:pols00}
    \end{subfigure}
    \hspace{0.1\textwidth}
    \begin{subfigure}[b]{0.4\textwidth}
        \includegraphics[width=.8\textwidth]{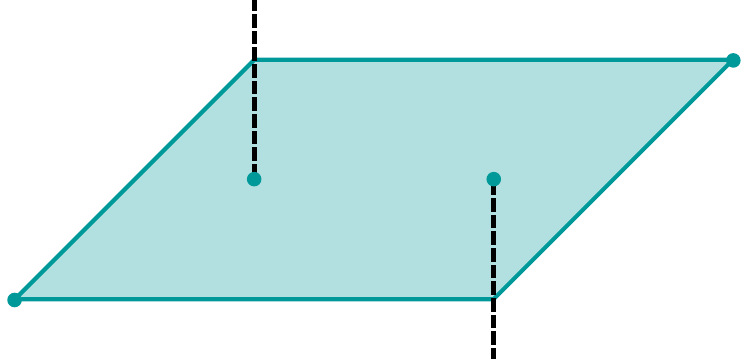}
        \caption{\small \textit{$(\varepsilon_1,\varepsilon_2)=(+1,-1)$}}
        \label{fig:pols10}
    \end{subfigure}    
    \newline
    \begin{subfigure}[b]{0.4\textwidth}
        \includegraphics[width=.8\textwidth]{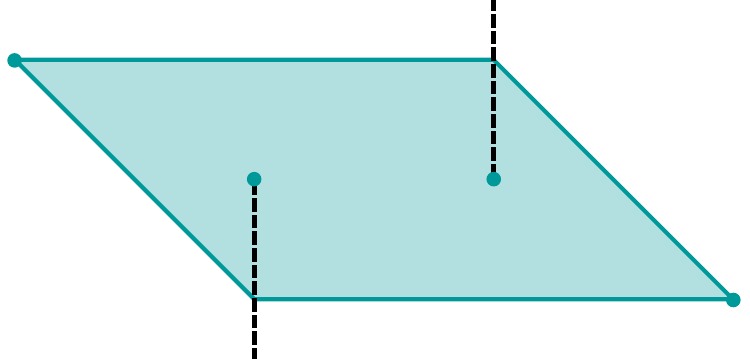}
        \caption{\small \textit{$(\varepsilon_1,\varepsilon_2)=(-1,+1)$}}
        \label{fig:pols01}
    \end{subfigure}
    \hspace{0.1\textwidth}    
    \begin{subfigure}[b]{0.4\textwidth}
        \includegraphics[width=.8\textwidth]{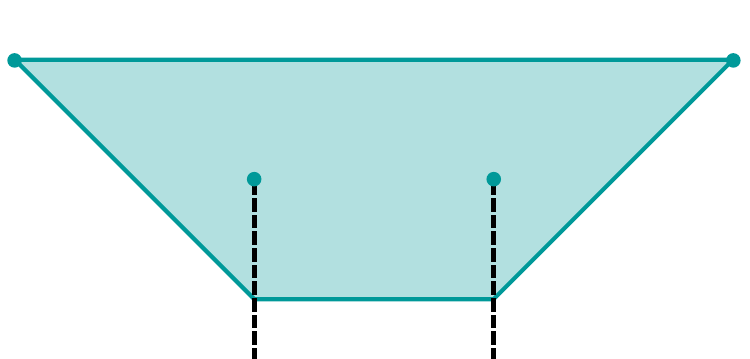}
        \caption{\small \textit{$(\varepsilon_1,\varepsilon_2)=(-1,-1)$}}
        \label{fig:pols11}
    \end{subfigure}
\caption{\small{Representatives of the polygon invariant discussed in Theorem~\ref{thm:poly-invt} for the case $s_-<s<s_+$ corresponding to different choices of signs $\varepsilon=(\varepsilon_1,\varepsilon_2)$. The cuts are indicated by dashed lines.
For $s<s_-$ the polygon invariant is represented by the polygon in Figure~\ref{fig:pols10} but without the cuts, and for 
$s>s_+$ the polygon invariant is represented by the polygon in Figure~\ref{fig:pols01} but without the cuts.
Comparing with Figure~\ref{fig:momimg}, notice that at $s=0$ the image of the system is the same polygon as in
Figure~\ref{fig:pols10} (again without the cuts), and at $s=1$ the image of the system is a polygon which is equivalent, via a global application of $T^{-1}$, to the polygon shown in Figure~\ref{fig:pols01} (again without the cuts).}}
\label{fig:pols}
\end{figure}

Let $u\colon \R \to \{0,1\}$ denote the usual Heavyside step function,
\[
 u(t) := \begin{cases} 0, & \text{if }t \leq 0,\\ 1, & \text{if }t > 0.\end{cases}
\]

Also, for $s_- < s < s_+$ we define
\begin{equation}
\label{eqn:rho}
\begin{aligned}
 \rho_1 &:= \sqrt{4 - 12 s + 13 s^2 - 8 s^3 + 4 s^4} , \\ 
 \rho_2 &:= \sqrt{-4 + 12 s + 23 s^2 - 64 s^3 + 32 s^4}.
\end{aligned}
\end{equation}
Note that
$s_-$ and $s_+$ are precisely the only two roots for $s\in [0,1]$ of the polynomial
\begin{equation}\label{eqn:sminus-plus}
-4 + 12 s + 23 s^2 - 64 s^3 + 32 s^4.
\end{equation}
Thus, the argument of $\rho_2$ is strictly positive when $s_- < s < s_+$.
Furthermore, the argument of $\rho_1$ is strictly positive for all $s\in\R$.

\begin{theorem}[{Alonso \& Hohloch \cite[Theorem 22]{AH2}}]
The height invariant $(h_1(s),h_2(s))$ associated to the system \eqref{eqn_ssys} for $s_- < s < s_+$ is given by 
\begin{equation*}
\begin{aligned}
h_1(s) &= -\dfrac{1}{2\pi} \mcF(s) + 2u (2-3s),  \\
h_2(s) &= 2 - h_1(s),
\end{aligned}
\end{equation*} where
\begin{equation*}
\begin{aligned}
 \mcF(s) := &-4 \arctan \left( \dfrac{-4 - 16 s^3 + 8 s^4 + 
 4 s (3 + \rho_1) - 
 s^2 (1 + 4 \rho_1)}{(-2 + 3 s) \rho_2}\right) 
 \\&-2 \arctan \left( \dfrac{-4 + 32 s^3 - 16 s^4 + 
 4 s (3 + 2 \rho_1) - 
 s^2 (25 + 8 \rho_1)}{(-2 + 3 s) \rho_2}  \right) 
 \\& + \dfrac{2-3s}{2(s-1)s} \log \left( \dfrac{- \rho_1}{-6s + 6s^2 + \rho_2} \right).
\end{aligned}
\end{equation*}
\end{theorem}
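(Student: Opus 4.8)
This statement is \cite[Theorem~22]{AH2}, so strictly speaking nothing new is proved here; but since it is the starting point of the computations in Section~\ref{sec:twist-proof}, I indicate how the argument runs. The plan is to pass to the reduced spaces of the $\mbS^1$-action generated by $L$, write $H_s$ out explicitly there, and evaluate an area. Recall from Section~\ref{sss:height} that $h_r=\mu_2(m_r)-\min_{p\in\De\cap b_{\lam_r}}\operatorname{proj}_2(p)$ equals $\tfrac{1}{2\pi}$ times the symplectic area of the sublevel set $\{\bar H_s<\eta_r\}$ in the reduced space $M^{\mathrm{red}}_{\lam_r}:=L^{-1}(\lam_r)/\mbS^1$, where $\bar H_s$ is the function induced by $H_s$. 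The relevant data are $\lam_1=L(\mathcal{N}\times\mathcal{S})=-1$, $\lam_2=L(\mathcal{S}\times\mathcal{N})=1$, and $\eta_r:=H_s(m_r)$ with $\eta_1=1-2s=-\eta_2$.

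The $\mbS^1$-action is the simultaneous rotation of the two spheres about their vertical axes, whose ring of invariant functions is generated by $z_1,z_2$ and $x_1x_2+y_1y_2$. Taking $(z_1,\phi)$ as coordinates on $M^{\mathrm{red}}_\lam$ --- where $z_2=(\lam-z_1)/2$ and $\phi$ is the angle between the horizontal projections of the two unit vectors --- a direct computation gives reduced symplectic form $\pm\,dz_1\wedge d\phi$ and
\[
 \bar H_s(z_1,\phi)=(1-s)z_1+s\,\tfrac{\lam-z_1}{2}+2(1-s)s\,R_\lam(z_1)\cos\phi,
\]
where $R_\lam(z_1):=\sqrt{(1-z_1^2)\,(1-(\tfrac{\lam-z_1}{2})^2)}$. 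For fixed $z_1$, the condition $\bar H_s(z_1,\phi)<\eta_r$ reads $\cos\phi<g(z_1)$ for an explicit function $g$ (which, when $\lam=\lam_1$, has the constant sign of $2-3s$), so the $\phi$-measure of the sublevel set is $2\pi$ where $g\geq1$, is $2\pi-2\arccos g(z_1)$ where $-1<g<1$, and is $0$ where $g\leq-1$; integrating this against $dz_1$ over the admissible range of $z_1$ computes the area. The subinterval on which $g\geq1$ is nonempty precisely when $2-3s>0$, and it accounts for the term $2u(2-3s)$; the remaining contribution is, by definition of $\mcF$, equal to $-\tfrac{1}{2\pi}\mcF(s)$.

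For the relation $h_2=2-h_1$ I would use a symmetry of the system: rotating the first sphere by $\pi$ about its $x$-axis and the second by $\pi$ about its $y$-axis is a symplectomorphism $\Phi$ of $(M,\om)$ with $\Phi^*(L,H_s)=(-L,-H_s)$ and $\Phi(m_1)=m_2$. It descends to a symplectomorphism $M^{\mathrm{red}}_{\lam_1}\to M^{\mathrm{red}}_{\lam_2}$ carrying $\{\bar H_s<\eta_1\}$ onto $\{\bar H_s>\eta_2\}$, so $h_1$ also equals $\tfrac{1}{2\pi}$ times the area of $\{\bar H_s>\eta_2\}$ in $M^{\mathrm{red}}_{\lam_2}$; adding $h_2$ then gives $\tfrac{1}{2\pi}$ times the total area of $M^{\mathrm{red}}_{\lam_2}$, which is the affine length $2$ of the slice $\De\cap b_{\lam_2}$.

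The main obstacle is the closed-form evaluation of $\int\arccos g(z_1)\,dz_1$. Integrating by parts and then substituting $t=z_1+2$ turns the remaining integrand into a rational function of $t$ times $(t^2-a^2)^{-1/2}$, with $a^2=1+\bigl(\tfrac{2-3s}{2(1-s)s}\bigr)^2$; crucially this is elementary rather than genuinely elliptic, and a partial-fraction decomposition in $t$ produces one logarithmic and two inverse-trigonometric terms. The identities $a=\tfrac{\rho_1}{2(1-s)s}$ and $\sqrt{9-a^2}=\tfrac{\rho_2}{2(1-s)s}$ --- valid for $s_-<s<s_+$, and exactly the reason $\rho_1$ and $\rho_2$ appear --- together with the limits of integration convert the evaluated primitive into the stated formula for $\mcF(s)$. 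The delicate bookkeeping is in the signs, the branch cuts of $\arccos$ and $\log$, and the case split $s\lessgtr 2/3$ carried by the Heaviside term $u(2-3s)$; these are worked out in \cite{AH2}.
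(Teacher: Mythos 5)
The paper itself does not prove this statement; it imports it verbatim from Alonso--Hohloch \cite[Theorem 22]{AH2}, so there is no internal proof to compare against. Your sketch is, however, an essentially correct reconstruction of the computation that underlies it, and it is consistent with the reduction set-up the paper itself uses later in Section~\ref{ss:action} (cylindrical coordinates, the reduced spaces of the $L$-action, and areas bounded by level curves of the reduced Hamiltonian). The key points check out: the numerator of $g$ at $\lam_1=-1$ factors as $(1-\tfrac{3s}{2})(1-z_1)$, so after cancelling $(1-z_1)$ against $R_{\lam_1}$ one gets $g=\tfrac{2-3s}{2(1-s)s\sqrt{t^2-1}}$ with $t=z_1+2$, which is why the integral is elementary rather than elliptic; the identities $4(1-s)^2s^2+(2-3s)^2=\rho_1^2$ and $36(1-s)^2s^2-\rho_1^2=\rho_2^2$ are exactly right and explain the appearance of $\rho_1,\rho_2$ and the restriction $s_-<s<s_+$; and the involution you use for $h_2=2-h_1$ is a valid symplectomorphism with $\Phi^*(L,H_s)=(-L,-H_s)$, closely parallel to the one the paper uses in Lemma~\ref{lem:symmetry}, with the total reduced area $4\pi$ at $\lam_2$ matching the slice length $2$ of the polygon.

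Two caveats on precision. First, the bookkeeping sentence attributing the term $2u(2-3s)$ to the region where $g\geq 1$ is loose: for $s<\tfrac23$ that region contributes only part of the $2$, the rest coming from the $2\pi$ portion of the measure $2\pi-2\arccos g$ on the complementary interval (the clean statement is $h_1=2-\tfrac1\pi\int\arccos g\,dz_1$ there), while for $s>\tfrac23$ the relevant degeneration is instead a region with $g\leq-1$ contributing zero. Second, ``the remaining contribution is, by definition of $\mcF$'' is circular as written; the identity between the evaluated primitive (the log and two arctangent terms with the stated arguments) and the displayed $\mcF(s)$, including signs and branches, is the actual content, which you only outline and defer to \cite{AH2}. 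Since the theorem is cited rather than proved in this paper, that level of detail is acceptable, but a self-contained proof would need that evaluation carried out explicitly.
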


In Figure \ref{fig:hei} we can see the representation of the height invariant as a function of the parameter $s$.

\begin{figure}[ht]
\includegraphics[width=200pt]{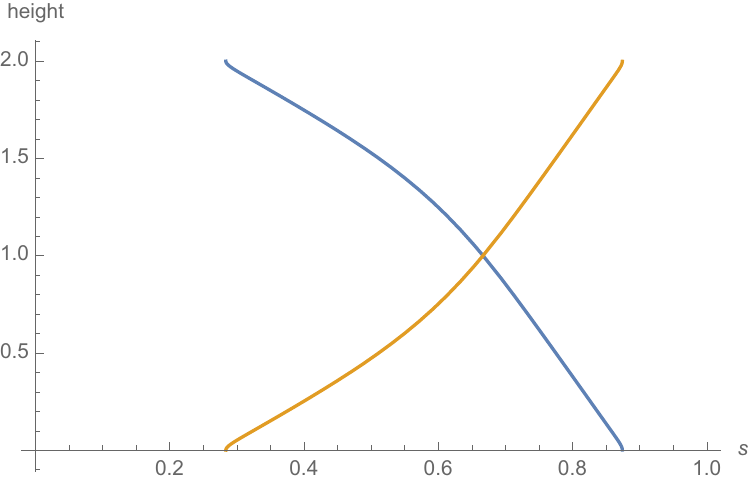}
\caption{
The height invariant as a function of the parameter $s$ exists only for $s_- < s < s_+$. The height $h_1(s)$ (blue) corresponds to the focus-focus singularity $\mathcal{N} \times \mathcal{S}$ and the height  $h_2(s)$ (yellow) to $\mathcal{S} \times \mathcal{N}$. }
\label{fig:hei}
\end{figure}

\subsection{Symmetry between the Taylor series}
\label{sec:symmetry}

Before computing the Taylor series for the focus-focus singularity $\mathcal{N}\times\mathcal{S}$, in this section we will
show that symmetries of the system can be used to determine a relationship between the
Taylor series invariants of the two focus-focus points of the system.

To do this we will make use of Proposition~\ref{prop:symmetry-general}, which explains how changing the signs of the components of the momentum map affects the Taylor series invariant. Sepe \& V\~{u} Ng\d{o}c~\cite[Theorem 4.56]{SepeVN-notes} show how different choices of local normal form charts (as in Theorem \ref{EliassonMZ}) impact the resulting Taylor series. In the following proof, we show how changing the signs of the components of the momentum map impact the preferred choice of  chart in a semitoric system, and then apply the result of Sepe \& V\~{u} Ng\d{o}c to obtain Proposition~\ref{prop:symmetry-general}.

\begin{proof}[Proof of Proposition~\ref{prop:symmetry-general}]
Since $\Phi$ is a fiber preserving symplectomorphism, the fact that $\Phi(m)$ is focus-focus is immediate.

 Since $m$ is focus-focus, by Theorem~\ref{EliassonMZ} there exists a map $\phi \colon U\to \R^4$ from a neighborhood $U$ of $m$ which is a symplectomorphism onto its image, and a local diffeomorphism $\varrho\colon \R^2\to\R^2$ around $0$ such that $\varrho(F(m)) = 0$ and $\varrho \circ F =  G \circ \phi$, where 
 \[
  G(x_1,y_1,x_2,y_2):= (G_1,G_2)(x_1,y_1,x_2,y_2) := (x_1y_2-x_2y_1, x_1 y_1 + x_2 y_2)
 \]
 and $\R^4$ is equipped with the symplectic form $\om_{\R^4} = \dee x_1 \wedge \dee y_1 + \dee x_2 \wedge \dee y_2$.
  As discussed in Section~\ref{sss:taylor}, we may, and do, choose these maps such that $\varrho_1(l,h) = l$ and that $\varrho_2(l,h)$ satisfies $\frac{\partial \varrho_2}{\partial h}>0$.
  
 Following Sepe \& V\~{u} Ng\d{o}c~\cite{SepeVN-notes} and adapting to our notation, let
\[
 A_{-1,1}(x_1,y_1,x_2,y_2) := (x_2,y_2,x_1,y_1), \qquad 
 A_{1,-1}(x_1,y_1,x_2,y_2) := (y_1, -x_1, y_, -x_2),
\]
$A_{-1,-1} := A_{-1,1}\circ A_{1,-1}$, and let $A_{1,1}$ denote the identity on $\R^4$.
Notice that $A_{\varepsilon_1,\varepsilon_2}\colon \R^4 \to \R^4$ is a symplectomorphism and
 that $G\circ A_{\varepsilon_1,\varepsilon_2} = (\varepsilon_1 G_1,\varepsilon_2 G_2)$ for any $\varepsilon_1,\varepsilon_2\in\{-1,+1\}$.

Now fix a choice of $\varepsilon_1,\varepsilon_2\in\{-1,1\}$ and let $\tilde{F}: =(\varepsilon_1 L,\varepsilon_2 H)$. Note that $(M,\om,\tilde{F})$ is a semitoric system, $\Phi$ satisfies $\Phi^*F' = \tilde{F}$, and $m$ is a focus-focus point of $(M,\om,\tilde{F})$. Let $\tilde{S}_m^\infty$ denote the Taylor series invariant of $m$ in $(M,\om,\tilde{F})$, and note that $\tilde{S}_m^\infty=(S_{\Phi(m)}')^\infty$.
To complete the proof, we will now show that
 \begin{equation}\label{eqn:tildeS-formula}
 S_m^\infty(l,j) = \varepsilon_2 \tilde{S}^\infty_{m}(\varepsilon_1 l, \varepsilon_2 j) + \left(\frac{1-\varepsilon_1}{2}\right) \pi l \quad (\textrm{mod }2\pi l).
 \end{equation}
 We identify the Klein group $K_4$ with $\{-1,1\}^2$.
 Note that since $(L,H)$ and $(\varepsilon_1 L, \varepsilon_2 H)$ induce the same fibration, due to Sepe \& V\~{u} Ng\d{o}c~\cite[Theorem 4.56]{SepeVN-notes}, the Taylor series in each case will be related by the $K_4$-action described in Sepe \& V\~{u} Ng\d{o}c~\cite[Lemma 4.52]{SepeVN-notes}. The remainder of the proof is showing that the preferred choices of each Taylor series, relative to the semitoric systems, are the ones which satisfy Equation~\eqref{eqn:tildeS-formula}.

 Define $\tilde{\varrho}= (\varepsilon_1\varrho_1,\varepsilon_2\varrho_2)$. 
Then $\tilde{\phi} := A_{\varepsilon_1,\varepsilon_2}\circ \phi\colon U \to \R^4$ is a symplectomorphism onto the image and $\tilde{\varrho}$ is a local diffeomorphism around zero such that $\tilde{\varrho}(\tilde{F}(m)) = 0$ and $\tilde{\varrho} \circ \tilde{F} =  G \circ \tilde{\phi}$.
Furthermore, writing $\tilde{\varrho} = (\tilde{\varrho}_1, \tilde{\varrho}_2)$, we see that $\tilde{\varrho}_1(\varepsilon_1 l,\varepsilon_2 h) = l$ and $\frac{\partial} {\partial h}\left(\tilde{\varrho}_2(\varepsilon_1 l, \varepsilon_2 h)\right) >0$.

Recall that associated to each local normal form chart $(\phi,\varrho)$ around a focus-focus point, there is a well defined choice of Taylor series invariant, and recall that (as described in the beginning of Section~\ref{sss:taylor}) there is a preferred choice of such a pair $(\phi,\varrho)$ around any focus-focus point in a semitoric system (up to flat functions). 
Given the preferred isomorphism $(\phi,\varrho)$ of $(M,\om,F)$, we produced a new preferred isomorphism $(\tilde{\phi},\tilde{\varrho})$ of $(M,\om,\tilde{F})$.

By Sepe \& V\~{u} Ng\d{o}c~\cite[Lemma 4.55]{SepeVN-notes}, the map which assigns the Taylor series invariant to a local normal form chart around a focus-focus point is equivariant with respect to actions of $K_4 \cong \{-1,1\}^2$.
The action of $(\varepsilon_1,\varepsilon_2)\in K_4$ on the charts is given by $(\phi,(\varrho_1,\varrho_2))\mapsto (A_{\varepsilon_1,\varepsilon_2}\circ\phi,(\varepsilon_1\varrho_1,\varepsilon_2\varrho_2))$.
Since $(\phi,\varrho)$ and $(\tilde{\phi},\tilde{\varrho})$ are related by this action, we conclude that the Taylor series are related by the formula given in Sepe \& V\~{u} Ng\d{o}c~\cite[Lemma 4.52]{SepeVN-notes}, which is Equation~\eqref{eqn:tildeS-formula}, as desired.
\end{proof}

Now we will apply this result to our system.
Recall that $m_1=\mathcal{N}\times\mathcal{S}$ and $m_2=\mathcal{S}\times\mathcal{N}$ are both focus-focus singular points for $s\in \,\,]s_-,s_+[\,$.

\begin{lemma}\label{lem:symmetry}
Let $s\in \,\,]s_-,s_+[\,$, and for such $s$ let $S^\infty_{i,s}$ denote the Taylor series invariant at the focus-focus point $m_i$ for $i\in\{1,2\}$. Then
\[
 S_{2,s}^\infty(l,j) = -S_{1,s}^\infty(-l,-j)+\pi l.
\]
\end{lemma}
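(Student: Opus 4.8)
The plan is to deduce Lemma~\ref{lem:symmetry} from Proposition~\ref{prop:symmetry-general} by exhibiting an explicit symmetry of the system $(M,\om,F_s)$ that interchanges the two focus-focus points. First I would introduce the symplectomorphism $\Phi\colon M\to M$ given by
\[
 \Phi(x_1,y_1,z_1,x_2,y_2,z_2) := (-x_1,\,y_1,\,-z_1,\,x_2,\,-y_2,\,-z_2),
\]
which is the rotation by $\pi$ about the $y$-axis on the first sphere together with the rotation by $\pi$ about the $x$-axis on the second sphere. Since each of these two rotations lies in $\mathrm{SO}(3)$, it preserves the area form $\om_{\mbS^2}$, and hence $\Phi^*\om = \om$; moreover $\Phi$ preserves $M=\mbS^2\times\mbS^2$ and is an involution, so it is a symplectomorphism. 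A direct substitution gives $\Phi^*L = -z_1-2z_2 = -L$ and
\[
 \Phi^*H_s = -(1-s)z_1 - s z_2 + 2(1-s)s\bigl((-x_1)(x_2)+(y_1)(-y_2)\bigr) = -H_s,
\]
the crucial point being that the coupling term flips sign because $(-x_1)(x_2)+(y_1)(-y_2) = -(x_1x_2+y_1y_2)$. Thus $\Phi^*(L,H_s) = (\varepsilon_1 L,\varepsilon_2 H_s)$ with $\varepsilon_1 = \varepsilon_2 = -1$, and $\Phi$ sends $m_1 = \mathcal{N}\times\mathcal{S}$ to $\mathcal{S}\times\mathcal{N} = m_2$.

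Next I would verify the standing hypothesis of Proposition~\ref{prop:symmetry-general} that $m_1$ is the only singular point of $F_s$ in the fiber $F_s^{-1}(F_s(m_1))$: simplicity rules out any other focus-focus point in that fiber; the remaining rank-zero singular points $\mathcal{N}\times\mathcal{N}$, $\mathcal{S}\times\mathcal{N}$, $\mathcal{S}\times\mathcal{S}$ have $L$-values $3$, $1$, $-3$ respectively, all different from $L(m_1)=-1$, so none of them lies in the fiber; and the focus-focus value lies in the interior of $F_s(M)$ whereas elliptic-regular (rank-one) values lie on its boundary, so the fiber contains no rank-one singular point either. Hence the hypothesis holds for $m_1$ (and symmetrically for $m_2$).

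Applying Proposition~\ref{prop:symmetry-general} with $(M',\om',F') = (M,\om,F_s)$, $m=m_1$, $\Phi(m_1)=m_2$, and $\varepsilon_1=\varepsilon_2=-1$ then yields
\[
 S_{1,s}^\infty(l,j) = -\,S_{2,s}^\infty(-l,-j) + \pi l \pmod{2\pi l}.
\]
Finally I would substitute $(l,j)\mapsto(-l,-j)$ to obtain $S_{2,s}^\infty(l,j) = -S_{1,s}^\infty(-l,-j) - \pi l \pmod{2\pi l}$, and since $-\pi l\equiv\pi l\pmod{2\pi l}$ this is precisely the asserted identity (recall that $S^\infty$ is an element of $\R_0[[l,j]]/(2\pi l\,\Z)$, so the equality is to be read in that quotient). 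The only real content lies in producing the symmetry $\Phi$; everything afterwards is bookkeeping. I do not expect a genuine obstacle — the one point needing a little care is arranging the reversal of the sign of the coupling term $x_1x_2+y_1y_2$, which is why the two $\pi$-rotations must be taken about perpendicular horizontal axes (one about $y$ on the first sphere, one about $x$ on the second) rather than about the same axis on both factors.
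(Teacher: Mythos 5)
Your proposal is correct and follows essentially the same route as the paper: the identical involution $\Phi(x_1,y_1,z_1,x_2,y_2,z_2)=(-x_1,y_1,-z_1,x_2,-y_2,-z_2)$, the observations $\Phi^*\om=\om$, $\Phi^*(L,H_s)=(-L,-H_s)$, $\Phi(m_1)=m_2$, and an application of Proposition~\ref{prop:symmetry-general} with $\varepsilon_1=\varepsilon_2=-1$. Your additional checks (that $m_1$ is the only singular point in its fiber, and the substitution $(l,j)\mapsto(-l,-j)$ with $-\pi l\equiv\pi l \pmod{2\pi l}$) only make explicit bookkeeping that the paper leaves implicit.
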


\begin{proof}
 Consider the map $\Phi\colon \mbS^2\times \mbS^2 \to \mbS^2\times\mbS^2$ given by
 \[
  \Phi(x_1,y_1,z_1,x_2,y_2,z_2) = (-x_1,y_1,-z_1,x_2,-y_2,-z_2).
 \]
 Note that $\Phi^*\omega=\omega$, $\Phi(\mathcal{N}\times\mathcal{S}) = \mathcal{S}\times\mathcal{N}$, and $\Phi^*(L,H) = (-L,-H)$. Then we apply Proposition~\ref{prop:symmetry-general}, taking $\varepsilon_1=\varepsilon_2=-1$, which proves the claim.
\end{proof}

In order to calculate the twisting index invariant of the system, and therefore prove Theorem~\ref{thm:twist-intro}, we need to know
the lower order terms of the Taylor series invariant for each focus-focus point, which is the content of Theorem~\ref{thm:Taylor-intro}.
The above lemma is an important part of Theorem~\ref{thm:Taylor-intro}, since it implies that
to obtain both Taylor series it is sufficient to only explicitly compute $S^\infty_{1,s}(l,j)$.
This is what we will do now.

\subsection{The action integral}
\label{ss:action}

In order to obtain the remaining symplectic invariants of this system, we next need to compute the action integral. To do so, we perform singular symplectic reduction by the Hamiltonian $\mbS^1$-action generated by $L$, see Sjamaar \& Lerman~\cite{SL} for details of this concept.

We start by rewriting the system \eqref{eqn_ssys} using the usual cylindrical coordinates $(\theta_1,z_1,\theta_2,z_2)$ where $\theta_i$ measures the angle in the $x_iy_i$-plane in the counter clockwise direction starting from the positive $x_i$-axis for $i\in\{1,2\}$. We then have:
\begin{equation}
	\left\{
	\begin{aligned}
	L(\theta_1,z_1,\theta_2,z_2) &=  z_1 + 2 z_2,\\
	H_s(\theta_1,z_1,\theta_2,z_2) &= (1-s) z_1 + s z_2 + 2(1-s)s \sqrt{(1 - {z_1}^2) (1 - {z_2}^2)}
   \cos(\theta_1 - \theta_2).
	\end{aligned}
	\right.
	\label{eqncyl}
\end{equation} The symplectic form in cylindrical coordinates is given by $\omega =  \dee z_1 \wedge \dee \theta_1 + 2 \dee z_2 \wedge \dee \theta_2$, since the standard symplectic form on the sphere is $\omega_{\mbS^2} = \dee \theta \wedge \dee z$ and $\omega = -(\omega_{\mbS^2} \oplus 2 \omega_{\mbS^2})$. We now perform the affine coordinate change
\begin{equation}
\begin{array}{lcl}
q_1 := -\theta_1, && p_1 := z_1 + 2 z_2, \\
q_2 := \theta_1-\theta_2, && p_2 := 2(1+z_2),
\end{array}
\label{eq:changevars}
\end{equation} and obtain $L(q_1,p_1,q_2,p_2)=p_1$ and 
\begin{equation}
	\begin{aligned}
H_s(q_1,p_1,q_2,p_2) =& p_1 - p_2+2 - s(p_1 - p_2+2) + \frac{s}{2} (p_2-2) \\&+ (1-s)s \sqrt{p_2(p_2-p_1-1)(p_2-4)(p_2-p_1-3)} \cos(q_2).
       	\end{aligned}
       	\label{eq:LH}
\end{equation} 
In these coordinates, the symplectic form becomes $\omega = \dee q_1 \wedge \dee p_1 + \dee q_2 \wedge \dee p_2$. Moreover, it is obvious that $L=p_1$ is a constant of motion because $H_s$ does not depend on $q_1$.
This notation is thus particularly suitable to express the reduction by the $\mbS^1$-action induced by $L$. 

Instead of $p_1$, it is more convenient
to use the variable $l:=p_1+1\in[-2,4]$, so that $\mathcal{N} \times \mathcal{S}\in \{l=0\}$. 
We obtain the reduced space 
\[M^{\text{red},l} := \{L = l\}/\mbS^1.\] 
The reduced space at the levels $l=0$ and $l=2$ is a stratified symplectic space with the shape of a sphere with a conic singular point, at levels $l=-2$ and $l=4$ it is a point, and otherwise it is a smooth sphere.
Let $H^{\text{red},l}_s$ be the function on $M^{\text{red},l}$ induced by descending $H_s$ to the quotient.
We denote the coordinates on the reduced space by $(q_2,p_2)$, which are induced from the coordinates $(q_2,p_2)$ on $M$, defined above. The bounds for these coordinates on the reduced space depend on the level $l$:
\begin{equation}\label{eqn:coor-bounds}
-\pi \leq q_2 \leq \pi, \quad \max\{0,l\}\leq p_2 \leq \min\{l+2,4\}.
\end{equation}

In these coordinates, 
\begin{equation}\label{eqn:H-red_NS}
H_s^{\text{red},l}(q_2,p_2)=A^l_s(p_2) + \cos(q_2) \sqrt{B_s^l(p_2)},
\end{equation}
 where
\begin{equation}
	\left\{
	\begin{aligned}
	A_s^l(p_2) &= l+1-p_2-2s-l s + \tfrac{3}{2} s p_2, \\
	B_s^l(p_2) &= s^2 (1-s)^2 p_2 (p_2-l) (p_2-4) (p_2-2-l).
	\end{aligned}
	\right.
\end{equation}

For each $s\in [0,1]$, let $h$ be a constant and consider the level set
\begin{equation}\label{eqn:beta}
\beta^s_{l,h} := \left\{ (q_2,p_2)\in M^{\text{red},l} \mid H_s^{\text{red},l}(q_2,p_2)=h+(1-2s)\right\}.
\end{equation}
The level sets $\beta^s_{l,h}$ are closed curves in the reduced space, and the curve going through the focus-focus point $ \mathcal{N} \times \mathcal{S}$ corresponds to the value $h=0$, see Figure~\ref{4X2orbitsNS}.
The next step is to compute the (second) action integral from Equation~\eqref{eq:semigact},
\begin{equation}
\mcI(l,h) := \dfrac{1}{2\pi} \oint_{\beta^s_{l,h}} q_2 \dee p_2,
\label{eq:defAct}
\end{equation} where we use $\varpi = q_2 \dee p_2$ as the primitive of the symplectic form in $M^{\text{red},l}$ and the curve $\beta_{l,h}^s$ as our choice of cycle $\gamma_2^z$, cf.\ \eqref{eq:semigact}. The integral \eqref{eq:defAct} measures the symplectic volume of one of the two regions bounded by the curve $\beta_{l,h}^s$.
The condition $\partial_h \mcI(l,h)>0$ implies that, from the two regions that the curve bounds,
we have to choose the one with points satisfying $H_s^{\text{red},l}(q_2,p_2) -(1-2s) < h$.

Note that here our choice of coordinates has given us a choice of cycle, so the $\mathcal{I}$ defined in Equation~\eqref{eq:defAct} satisfies 
\[
 \mathcal{I} = I \circ \varrho
\] 
where $\varrho$ is as in the diagram in Figure~\ref{fig:diagram-semilocal} and $I$ is one of the possible choices of action as discussed in Section~\ref{sss:taylor}, and in particular Equation~\eqref{eq:semigact}. In fact, it will turn out to be the preferred action $\xi$ discussed in Remark~\ref{rmk:Xi-and-xi}, but we cannot see this a priori, as we discuss in Remark~\ref{rmk:right_cycle}.

Figure~\ref{fig:areas} shows an overview of the areas corresponding to the action integral. As $h$ increases (from left to right), so does the area in colour, which represents the area being integrated over. To obtain a circle-valued coordinate $q_2$,
we identify $(-\pi, p_2)$ with $(\pi,p_2)$ for each $p_2$, and thus obtain a cylinder with coordinates $(q_2,p_2)$.
We distinguish between three types of orbits on this cylinder:
\begin{itemize}
	\item Type I: The curve $\beta^s_{l,h}$ crosses the line $q_2=0$ and is homotopic to a point in the cylinder.
	\item Type II: The curve  $\beta^s_{l,h}$ crosses both $q_2=0$ and $q_2=\pm \pi$ and is not homotopic to a point in the cylinder.
	\item Type III: The curve $\beta^s_{l,h}$ crosses the line $q_2=\pm \pi$ and is homotopic to a point in the cylinder.
\end{itemize}

\begin{figure}[ht]
 \centering
  	\begin{subfigure}[b]{3cm}
       \includegraphics[width=3cm]{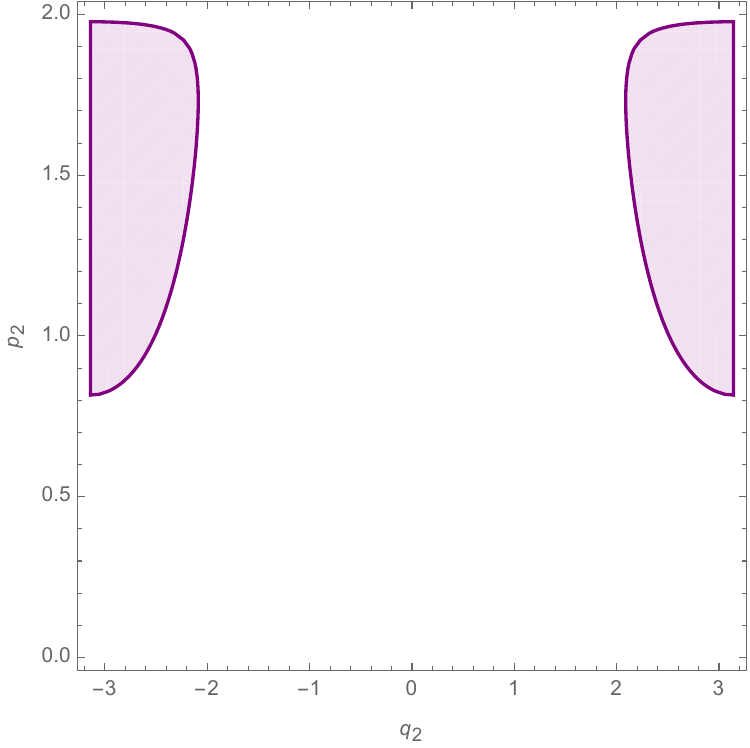}
        \caption*{\small \textit{$s<\tfrac{2}{3}$, $h<h_{l^+}$}}
    \end{subfigure}
        \begin{subfigure}[b]{4cm}
        \centering
       \includegraphics[width=3cm]{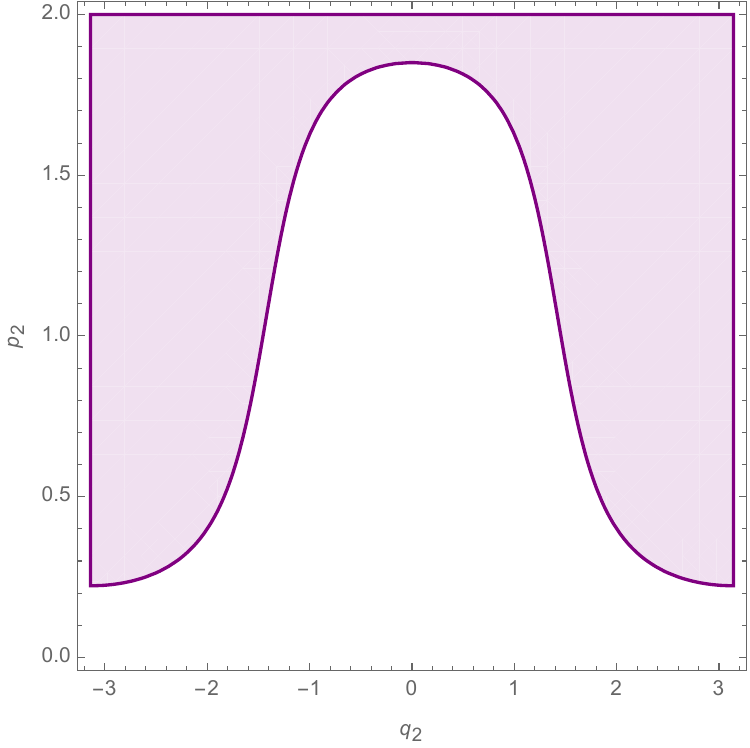}
        \caption*{\small \textit{$s<\tfrac{2}{3}$, $h_{l^+}<h<h_{l^-}$}}    \end{subfigure}
    \begin{subfigure}[b]{3cm}
       \includegraphics[width=3cm]{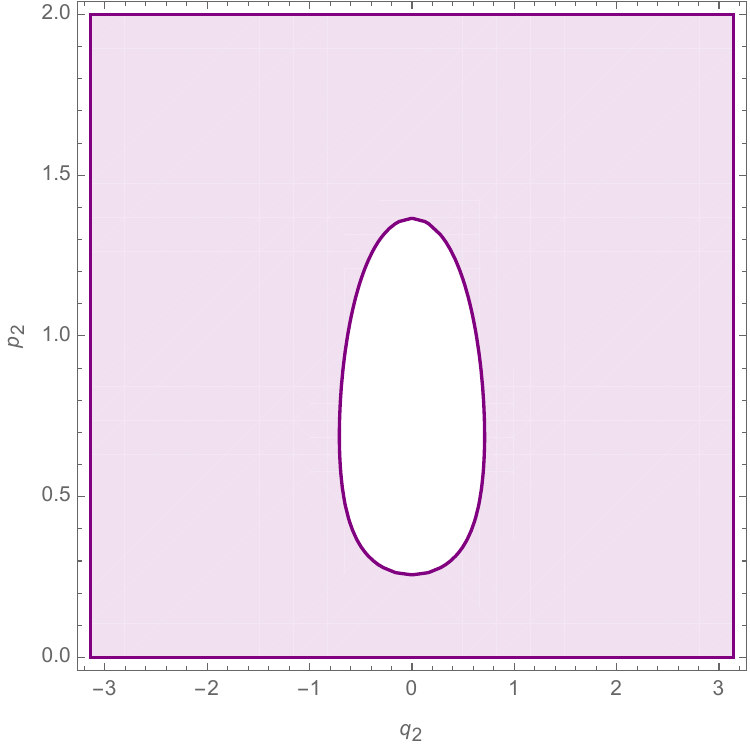}
        \caption*{\small \textit{$s<\tfrac{2}{3}$, $h_{l^-}<h$}}
    \end{subfigure}
    \\[0.5cm]
  	\begin{subfigure}[b]{3cm}
       \includegraphics[width=3cm]{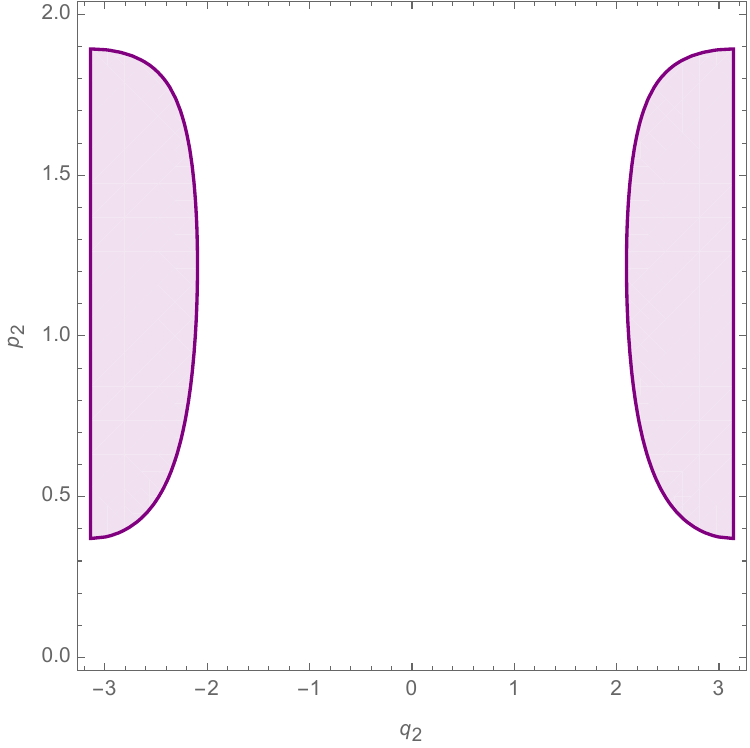}
        \caption*{\small \textit{$s=\tfrac{2}{3}$, $h<h_{l^\pm}$}}
    \end{subfigure}
        \begin{subfigure}[b]{4cm}
        \centering
       \includegraphics[width=3cm]{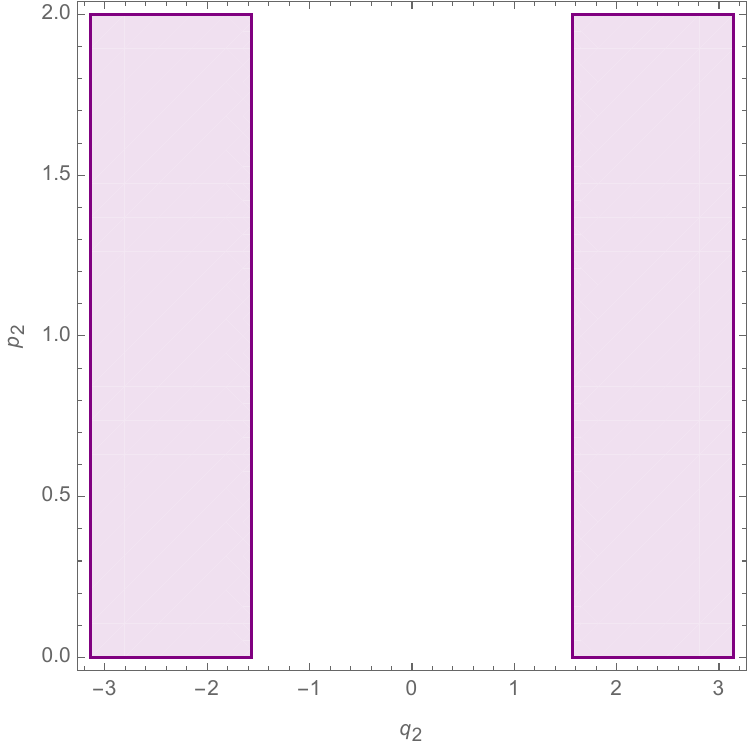}
        \caption*{\small \textit{$s=\tfrac{2}{3}$, $h=h_{l^\pm}$}}    \end{subfigure}
    \begin{subfigure}[b]{3cm}
       \includegraphics[width=3cm]{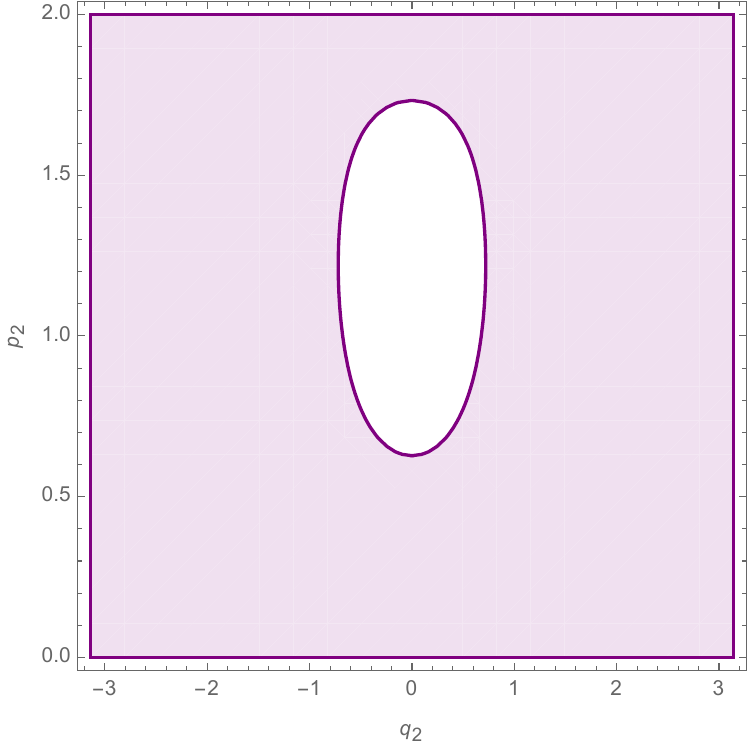}
        \caption*{\small \textit{$s=\tfrac{2}{3}$, $h_{l^\pm}<h$}}
    \end{subfigure}
    \\[0.5cm]
    \begin{subfigure}[b]{3cm}
       \includegraphics[width=3cm]{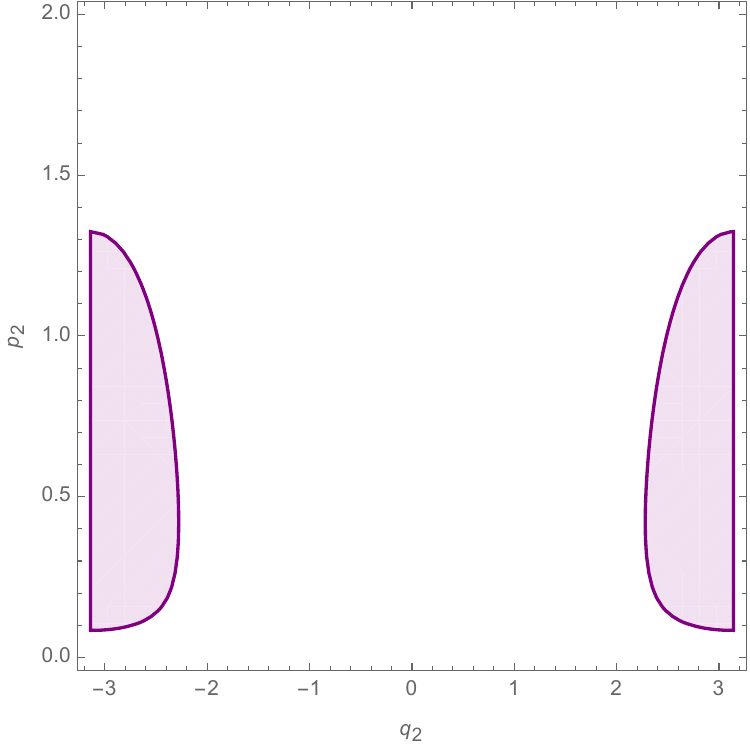}
        \caption*{\small \textit{$s>\tfrac{2}{3}$, $h<h_{l^-}$}}
    \end{subfigure}
        \begin{subfigure}[b]{4cm}
        \centering
       \includegraphics[width=3cm]{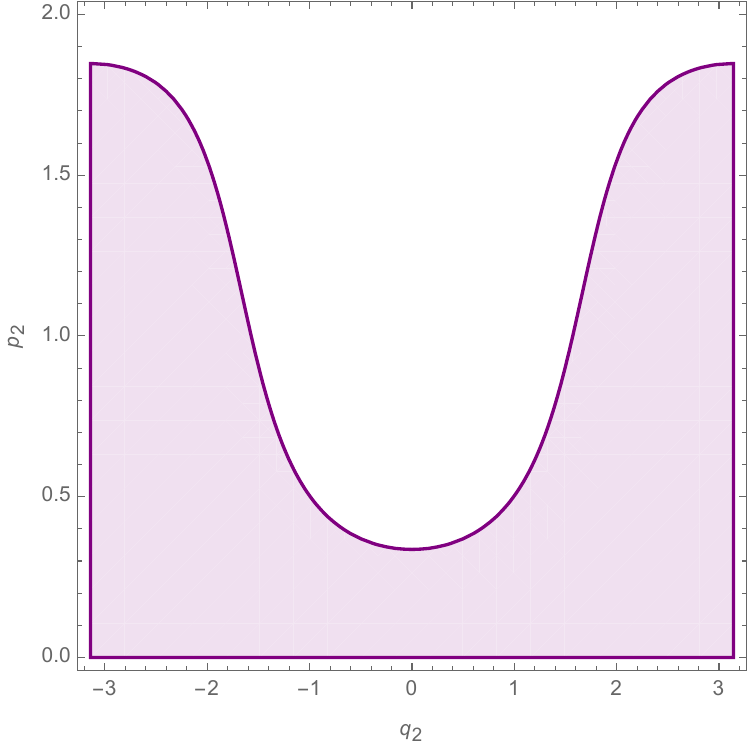}
        \caption*{\small \textit{$s>\tfrac{2}{3}$, $h_{l^-}<h<h_{l^+}$}}    \end{subfigure}
    \begin{subfigure}[b]{3cm}
       \includegraphics[width=3cm]{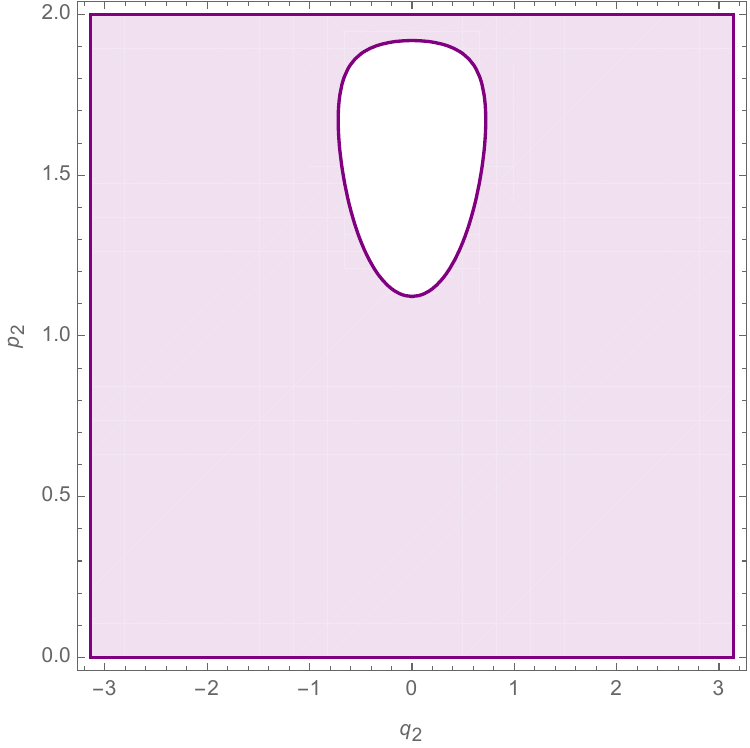}
        \caption*{\small \textit{$s>\tfrac{2}{3}$, $h_{l^+}<h$}}
    \end{subfigure}
\caption{Area representing the action integral for different values of $s$ and $h$. In the left column, we have curves of type III, in the centre, of type II, and on the right, of type I. An exception is the figure in the very center ($s=\tfrac{2}{3}$, $h<h_{l^\pm}$), which is a degenerate case.}
 \label{fig:areas}
\end{figure}

The different types of curves are distinguished by means of special separatrix curves, which correspond to the values $h_{[p_2=0]},h_{[p_2=l]},h_{[p_2=4]},h_{[p_2=2+l]}$ of $H^{\text{red},l}_s(q_2,p_2) -(1-2s)$ along the bounds of the coordinates on $M^{\text{red},l}$, namely $p_2=0$, $p_2=l$, $p_2=4$ and $p_2=2+l$:
\begin{equation}\label{eqn:h0-hl-h4-h2l}
\begin{aligned}
h_0 := h_{[p_2=0]} &= l(1-s), & \qquad h_4 := h_{[p_2=4]} &= 6\left(  s-\frac{2}{3}\right) + l (1-s),  \\[0.3cm]
h_l := h_{[p_2=l]} &= \dfrac{s}{2} l, & \qquad h_{2+l}:=h_{[p_2=2+l]} &= 3\left(s-\frac{2}{3} \right) + \frac{s}{2}l.
\end{aligned}
\end{equation}

That is, setting 
\begin{equation}
l^- := \max\{0,l\}, \quad l^+ := \min\{2+l,4\},
\label{lminmax}
\end{equation}
we have that for each value of $l$, there are two values $h_{l^-}:=h_{[p_2=l^-]}$ and $h_ {l^+}:=h_{[p_2=l^+]}$ that separate the three types of curves, see Figures \ref{fig:h_vs_l} and \ref{4X2orbitsNS}. 
Note that $l^-$ and $l^+$ are the extreme values of $p_2$, i.e. $l^- \leq p_2 \leq l^+$.

The value of the parameter $s$ then determines the type of the curve $\beta^s_{l,h}$. Note that if $s< \tfrac{2}{3}$, then $h_{l^-} > h_{l^+}$, if $s=\tfrac{2}{3}$, then $h_{l^-}=h_{l^+}$ and if $s> \tfrac{2}{3}$, then $h_{l^-} < h_{l^+}$, as illustrated in Figure \ref{fig:h_vs_l}.
 
\begin{figure}[ht]
 \centering
  	\begin{subfigure}[b]{4cm}
       \includegraphics[width=4cm]{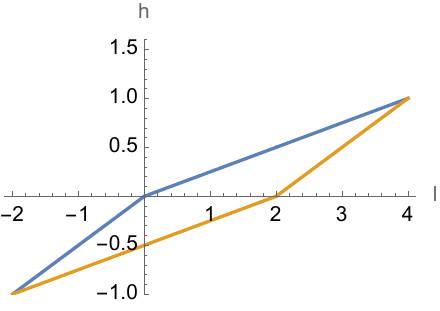}
        \caption*{$s< \tfrac{2}{3}$}
    \end{subfigure}
        \begin{subfigure}[b]{4cm}
       \includegraphics[width=4cm]{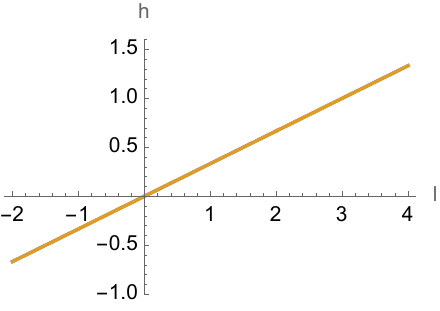}
        \caption*{$s= \tfrac{2}{3}$}    \end{subfigure}
    \begin{subfigure}[b]{4cm}
       \includegraphics[width=4cm]{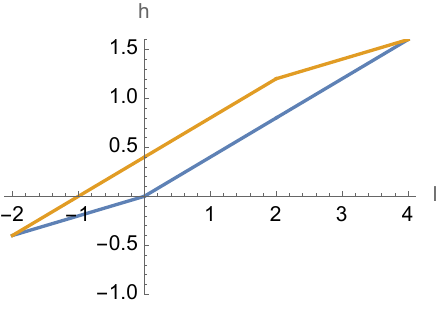}
        \caption*{$s> \tfrac{2}{3}$}
    \end{subfigure}
    \caption{Representation of $h_{l^-}$ (blue) and $h_{l^+}$ (yellow) as a function of $l$.}
    \label{fig:h_vs_l}
\end{figure}

\begin{figure}[ht]
 \centering
  	\begin{subfigure}[b]{3cm}
       \includegraphics[width=3cm]{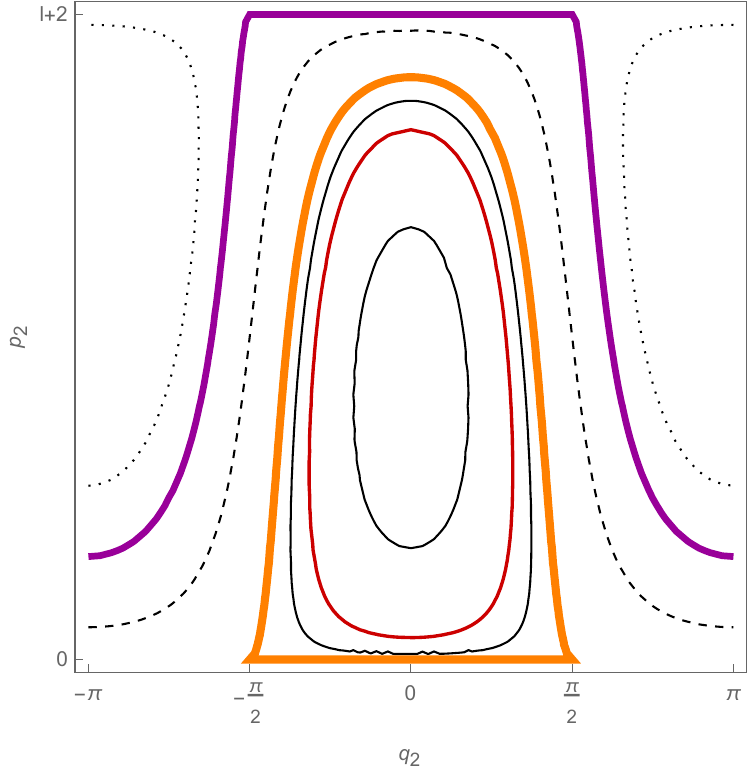}
        \caption*{\small \textit{$s<\tfrac{2}{3}$, $l<0$}}
    \end{subfigure}
        \begin{subfigure}[b]{3cm}
       \includegraphics[width=3cm]{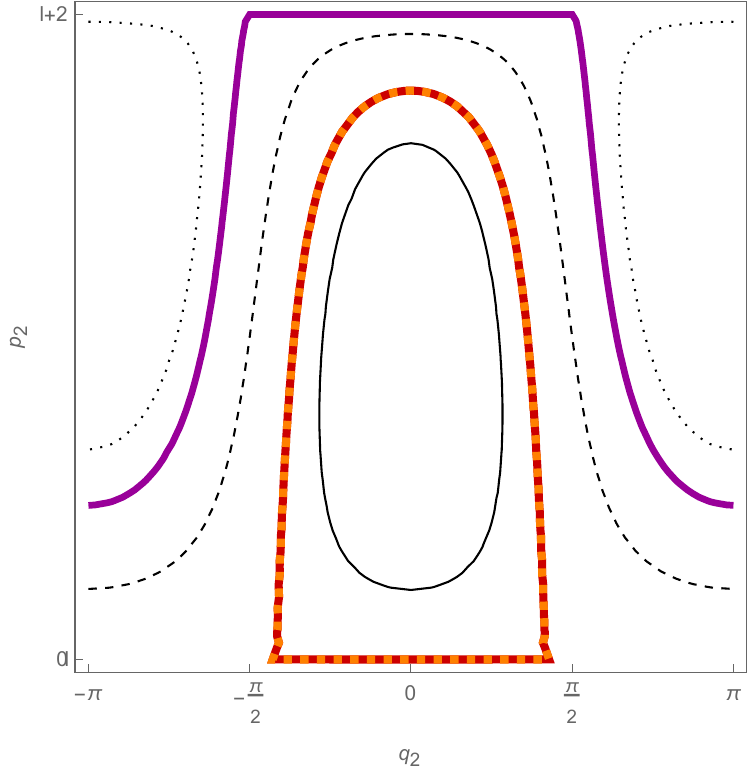}
        \caption*{\small \textit{$s<\tfrac{2}{3}$, $l=0$}}    \end{subfigure}
    \begin{subfigure}[b]{3cm}
       \includegraphics[width=3cm]{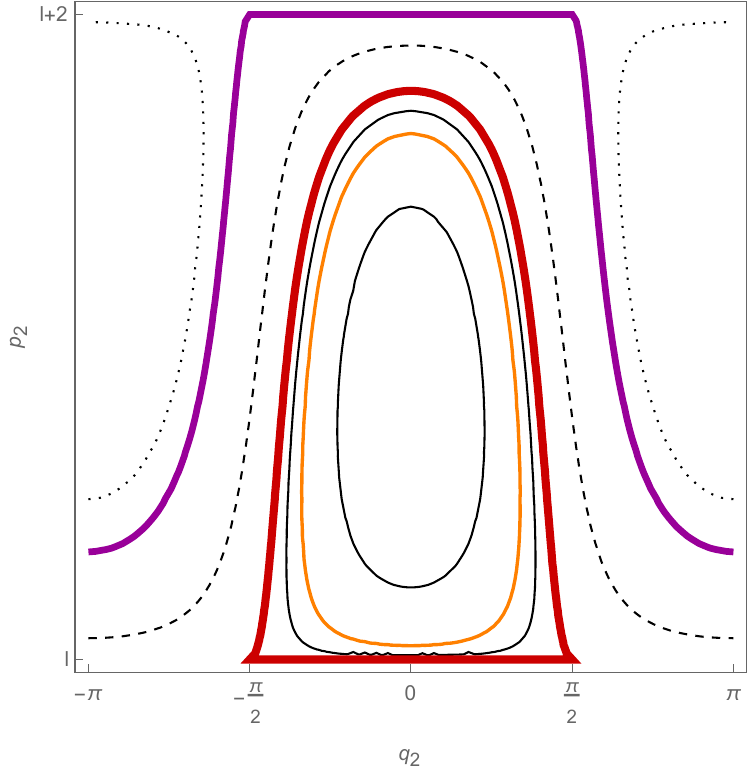}
        \caption*{\small \textit{$s<\tfrac{2}{3}$, $l>0$}}
    \end{subfigure}
    \\[0.5cm]
  	\begin{subfigure}[b]{3cm}
       \includegraphics[width=3cm]{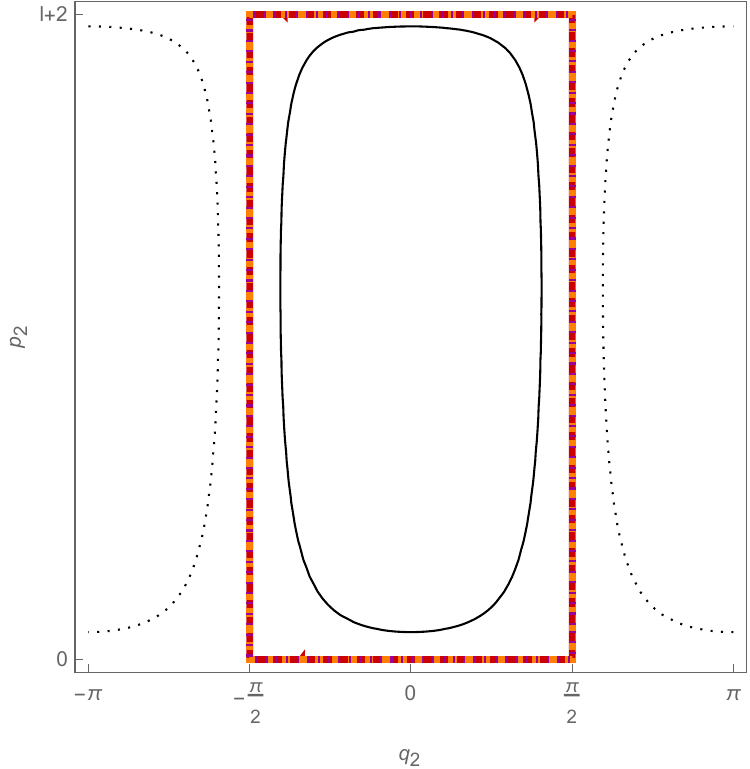}
        \caption*{\small \textit{$s=\tfrac{2}{3}$, $l<0$}}
    \end{subfigure}
        \begin{subfigure}[b]{3cm}
       \includegraphics[width=3cm]{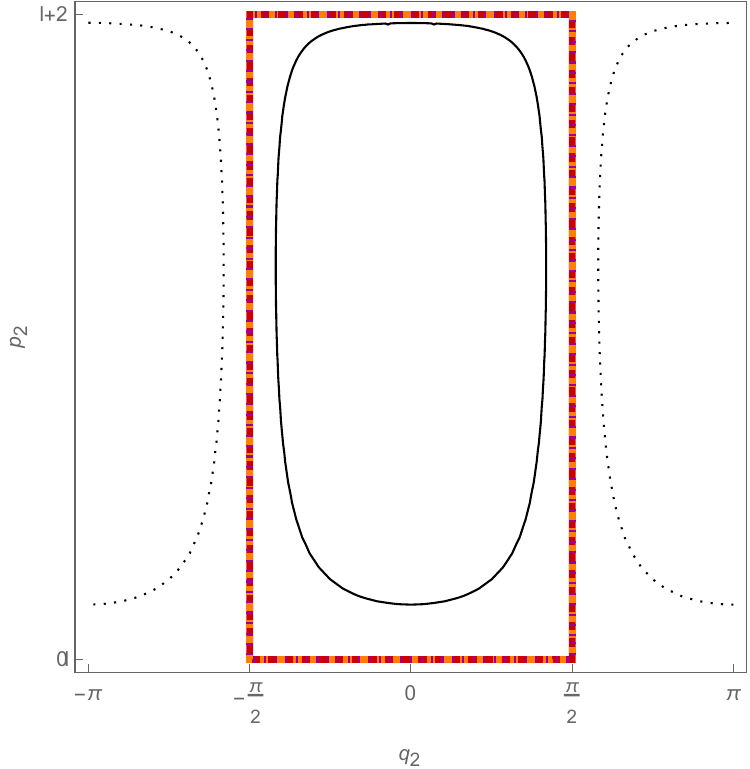}
        \caption*{\small \textit{$s=\tfrac{2}{3}$, $l=0$}}    \end{subfigure}
    \begin{subfigure}[b]{3cm}
       \includegraphics[width=3cm]{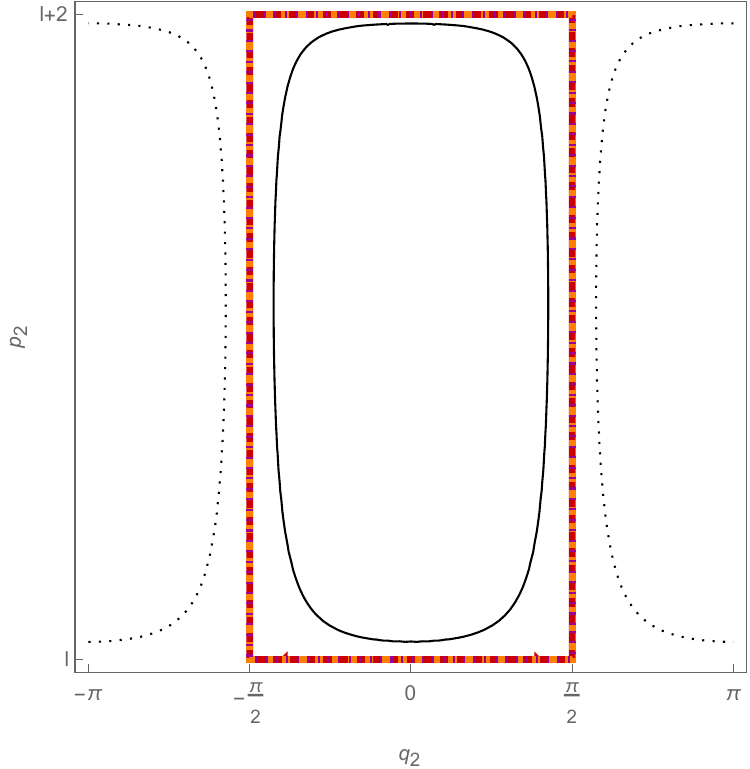}
        \caption*{\small \textit{$s=\tfrac{2}{3}$, $l>0$}}
    \end{subfigure}
    \\[0.5cm]
    \begin{subfigure}[b]{3cm}
       \includegraphics[width=3cm]{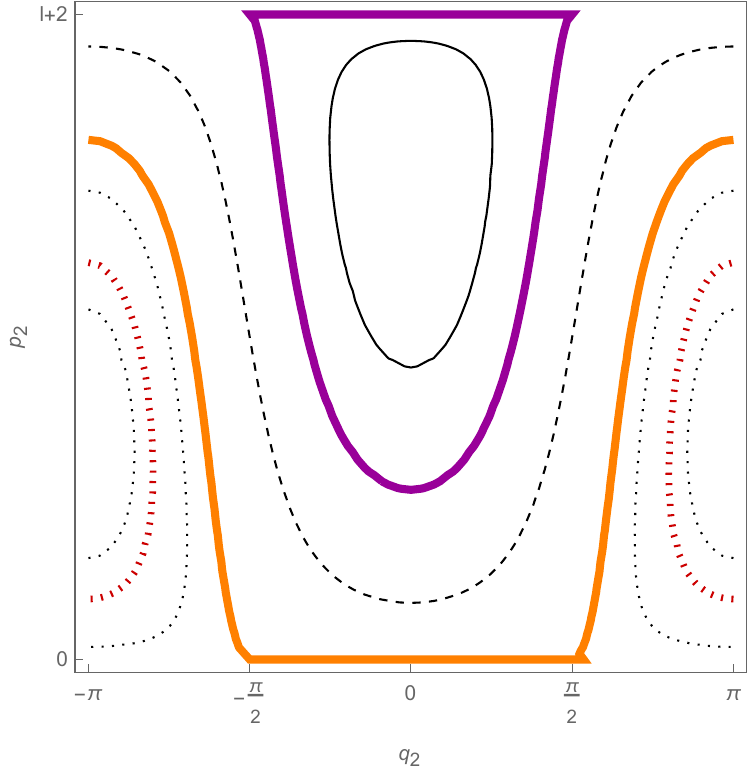}
        \caption*{\small \textit{$s>\tfrac{2}{3}$, $l<0$}}
    \end{subfigure}
        \begin{subfigure}[b]{3cm}
       \includegraphics[width=3cm]{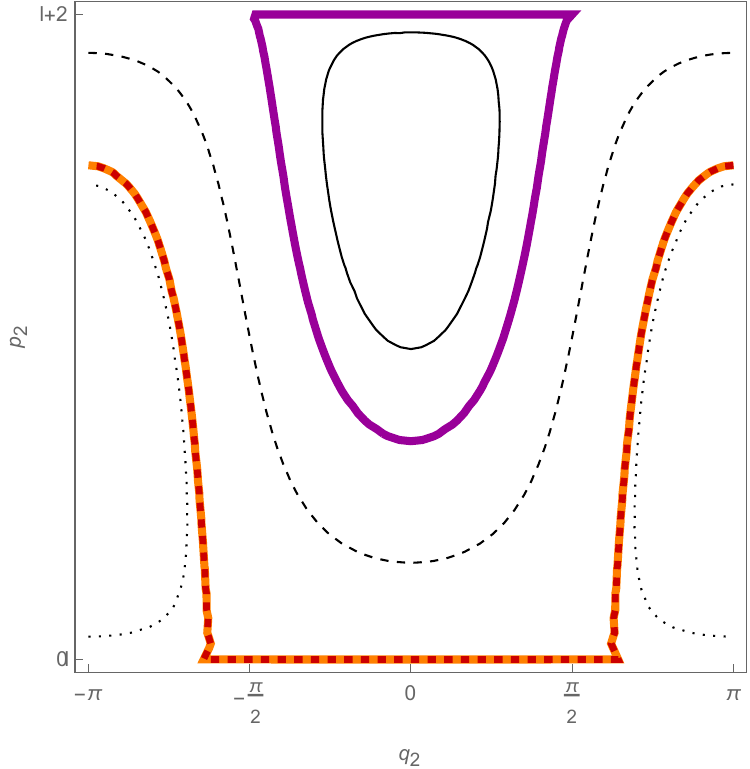}
        \caption*{\small \textit{$s>\tfrac{2}{3}$, $l=0$}}    \end{subfigure}
    \begin{subfigure}[b]{3cm}
       \includegraphics[width=3cm]{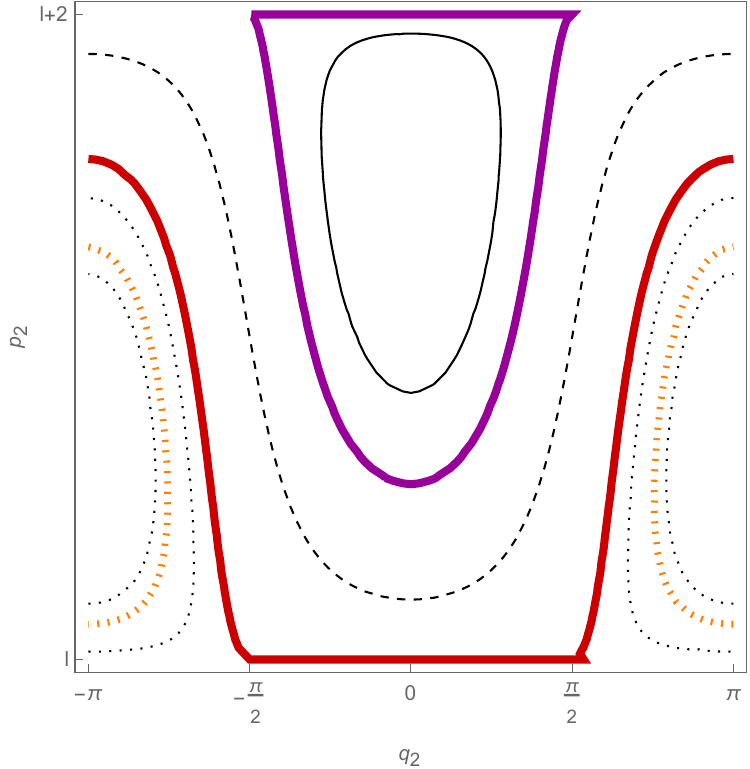}
        \caption*{\small \textit{$s>\tfrac{2}{3}$, $l>0$}}
    \end{subfigure}
\caption{\small \textit{Level sets of the function $H^{\text{red},l}_s(q_2,p_2)$ from Equation~\eqref{eqn:H-red_NS} for different values of $l$ and $s$. Type I orbits are full lines, type II are dashed and type III are dotted. The limit orbits are $h=h_{[p_2=0]}$ in orange, $h=h_{[p_2=l]}$ in red and $h=h_{[p_2=2+l]}$ in purple, where $h=H^{\text{red},l}_s(q_2,p_2)-(1-2s)$ and $h_{[p_2=0]}$, $h_{[p_2=l]}$, and $h_{[p_2=2+l]}$ are as given in Equation~\eqref{eqn:h0-hl-h4-h2l}.}}
 \label{4X2orbitsNS}
\end{figure}

 The curve types are shown in Table~\ref{table:types}.

\begin{table}[ht]
\centering
\setstretch{1.5}
\begin{tabular}{c||c|c|c}
&$s< \tfrac{2}{3}$ & $s= \tfrac{2}{3}$ & $s> \tfrac{2}{3}$  \\  \hline\hline 
$h$ large &$h > h_{l^-}$: type I & $h > h_{l^\pm}$: type I & $h > h_{l^+}$: type I  \\
\hline  
$h$ intermediate &$h_{l^-}> h > h_{l^+}$: type II & \emph{not possible} & $h_{l^+} > h > h_{l^-}$: type II \\
\hline 
$h$ small &$h_{l^+} > h$: type III & $h_{l^\pm} > h$: type III & $h_{l^-} > h$: type III\\[.2cm]
\end{tabular}
\setstretch{1.0}
\caption{The various types of curves depending on the values of $h$ and $s$.}
\label{table:types}
\end{table}

In Figure \ref{4X2orbitsNS} the relation between the different types of curves and the separatrices around the values $l=0$ and $s = \tfrac{2}{3}$ is shown.
Considering the subfigures from left to right, note that when $l<0$, the curve $H_0(q_2,p_2)-(1-2s)=h_{[p_2=0]}$ (orange colour) acts as separatrix, while the curve $H_l(q_2,p_2)-(1-2s)=h_{[p_2=l]}$ (red colour) is a normal curve. For $l=0$ both curves coincide and for $l>0$ the roles of the curves are exchanged. Considering the subfigures from top to bottom, note that for $s<\tfrac{2}{3}$, the curve $H_l(q_2,p_2)-(1-2s)=h_{l^-}$ (orange or red colour) separates orbits of types I and II and the curve $H_l(q_2,p_2)-(1-2s)=h_{l^+}$ (purple colour) separates the orbits of types II and III. For $s=\tfrac{2}{3}$, these curves coincide and for $s>\tfrac{2}{3}$, the roles are exchanged. 

We define now the polynomial 
\begin{equation}
\label{eq:polP}
\begin{aligned}
P^s_{l,h}(p_2) := B_s^l(p_2) - (h+(1+2s)-A_s^l(p_2))^2,
\end{aligned}
\end{equation} which is of degree 4 in $p_2$.  
Let $\ze_1, \ze_2, \ze_3, \ze_4$ denote the roots of this polynomial.
As in Alonso \& Dullin \& Hohloch~\cite[Equation (3.13)]{ADH2}, it turns out that within the bounds of the coordinate $p_2$ given in~Equation~\eqref{eqn:coor-bounds} all roots are real. Moreover, if we assume that they are ordered as  
\[
 \ze_1\leq \ze_2\leq \ze_3\leq \ze_4,
\]
then
\begin{equation}
 \ze_1 \leq  \min\{0,l\} \quad \max\{0,l\} \leq \ze_2 \leq \ze_3 \leq \min\{4, l+2\}, \quad \max\{4,l+2\} \leq \ze_4. 
\label{eq:rootsP}
\end{equation} 
That is, only $\ze_2$ and $\ze_3$ are within the bounds of the coordinate $p_2$ and therefore only these two roots are meaningful on the reduced space $M^{\text{red},l}$. In particular, $\beta_{l,h}^s \subset \{ (q_2,p_2) \mid \ze_2 \leq p_2 \leq \ze_3\}$.

We now express the action integral as follows.

\begin{proposition}
\label{prop:4X2act}
 The action integral $\mcI(l,h)$ from Equation~\eqref{eq:defAct} can be written as
\begin{equation}
\mcI(l,h) =  C^B_{s,h}(l) + \mfI(l,h)
\label{4X2actintNS}
\end{equation}
where the values of $C_{s,h}^B(l)$ are given in Table~\ref{table:areaB} and  $\mfI(l,h)$ is the elliptic integral
 $$\mfI(l,h):= \dfrac{1}{2\pi} \oint_{\beta^s_{l,h}} R_\mcI(p_2) \frac{ \dee p_2}{\sqrt{P^s_{l,h}(p_2)}}.$$
The elliptic curve $\beta^s_{l,h}$ is given by $\vartheta^2 = P^s_{l,h}(p_2)$, where $P^s_{l,h}$ is as in Equation~\eqref{eq:polP}, and 
 \begin{align}\label{eqn:R}
R_\mcI(p_2) =& \dfrac{h_0 + h_l - h_4 - h_{2+l}}{6}p_2 + \dfrac{4h-h_0 - h_l - h_4 -h_{2+l}}{2} \\& +  \dfrac{l}{2}\dfrac{(h-h_l)}{p_2-l} + \dfrac{4}{2} \dfrac{(h-h_4)}{p_2-4} + \dfrac{(2+l)}{2} \dfrac{(h-h_{2+l})}{p_2-2-l}.\nonumber
\end{align}
\end{proposition}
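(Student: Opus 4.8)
The plan is to start from the definition $\mcI(l,h) = \frac{1}{2\pi}\oint_{\beta^s_{l,h}} q_2\,\dee p_2$ and integrate by parts along the closed curve, turning $\oint q_2\,\dee p_2 = -\oint p_2\,\dee q_2$ and then expressing $q_2$ (or rather $\dee q_2$) in terms of $p_2$ using the defining equation of the level set. From Equation~\eqref{eqn:H-red_NS}, the level set $H_s^{\text{red},l}(q_2,p_2) = h+(1-2s)$ gives $\cos(q_2) = (h+(1-2s) - A_s^l(p_2))/\sqrt{B_s^l(p_2)}$, hence $\sin(q_2)\,\dee q_2$ can be solved for, and $\sin(q_2) = \pm\sqrt{P^s_{l,h}(p_2)}/\sqrt{B_s^l(p_2)}$ by the definition of $P^s_{l,h}$ in Equation~\eqref{eq:polP} (note the sign issue $1-2s$ vs.\ $1+2s$ in $P$, which should be checked carefully). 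This converts the loop integral over $\beta^s_{l,h}$ into an integral in $p_2$ between the two meaningful roots $\ze_2,\ze_3$ of $P^s_{l,h}$, with integrand a rational function of $p_2$ times $1/\sqrt{P^s_{l,h}(p_2)}$, plus possibly a boundary/monodromy contribution coming from the part of $\del M^{\text{red},l}$ that the curve wraps (the lines $p_2 = l^\pm$), which is exactly where the piecewise-constant term $C^B_{s,h}(l)$ and the three curve types (I, II, III) enter.

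The key steps, in order: (1) perform the integration by parts and substitute $\cos q_2$, obtaining $\mcI = \frac{1}{2\pi}\oint p_2 \,\frac{\del_{p_2}(\text{something})}{\sqrt{P^s_{l,h}(p_2)}}\dee p_2$ up to sign bookkeeping on the two branches of $\beta^s_{l,h}$; (2) carry out a partial-fraction decomposition of the resulting rational function $R_\mcI(p_2)$, choosing the poles at $p_2 = l$, $p_2 = 4$, $p_2 = 2+l$ (the factors of $B_s^l$ other than $p_2$ itself — note $p_2$ is not a pole because it gets multiplied out), to match the claimed form in Equation~\eqref{eqn:R}; here the residues are read off using the values $h_0, h_l, h_4, h_{2+l}$ from Equation~\eqref{eqn:h0-hl-h4-h2l}, since $h - h_{[p_2=p_2^*]}$ is precisely $h + (1-2s) - A_s^l(p_2^*)$ evaluated at the relevant bound; (3) identify the contribution of the curve segments lying along $q_2 = \pm\pi$ or $q_2 = 0$ and along the boundary of the reduced space, which contributes a term of the form (width in $q_2$)$\times$(value of $p_2$ at the boundary), accounting for the three types in Table~\ref{table:types} and Figure~\ref{fig:areas}, and collect this into $C^B_{s,h}(l)$; (4) verify the sign/orientation conventions so that $\del_h\mcI > 0$ holds, consistent with the region-selection rule stated before Equation~\eqref{eq:defAct}.

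The main obstacle I expect is step (3): correctly tracking which pieces of the boundary $\del M^{\text{red},l} = \{p_2 = l^-\}\cup\{p_2 = l^+\}$ and which of the lines $q_2 = 0, \pm\pi$ the curve $\beta^s_{l,h}$ actually touches or wraps around, as this depends on the type (I/II/III) and hence on the sign of $s - \tfrac{2}{3}$ and on $h$ relative to $h_{l^\pm}$ — this is exactly the case analysis encoded in Table~\ref{table:types} and is the source of the piecewise definition of $C^B_{s,h}(l)$ in Table~\ref{table:areaB}. The elliptic-integral part $\mfI(l,h)$ itself is comparatively routine once the rational function $R_\mcI$ is pinned down: it follows from the partial-fraction identity and the observation (from Equation~\eqref{eq:rootsP}, as in Alonso \& Dullin \& Hohloch~\cite{ADH2}) that $\ze_1,\ze_4$ lie outside the coordinate range so that the integral genuinely runs between $\ze_2$ and $\ze_3$. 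I would handle the case analysis by treating the generic case $s \neq \tfrac{2}{3}$ first (with the two sub-regimes for $h$), reducing the degenerate case $s = \tfrac{2}{3}$ by continuity, and in each regime drawing the explicit picture of $\beta^s_{l,h}$ on the cylinder to read off the boundary contribution.
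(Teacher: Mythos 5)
Your plan is essentially the paper's own proof: solve the level-set equation for $q_2=\arccos\bigl((h+(1-2s)-A_s^l(p_2))/\sqrt{B_s^l(p_2)}\bigr)$, reduce to an integral over $[\ze_2,\ze_3]$, integrate by parts so the integrand becomes $R_\mcI(p_2)/\sqrt{P^s_{l,h}(p_2)}$ with residues read off from $h-h_{[\cdot]}$, and absorb the type I/II/III area and boundary bookkeeping into the piecewise constant (the paper's Tables~\ref{table:areaA} and~\ref{table:areaB}). The only differences are organizational — the paper first writes the area as a half-rectangle correction $C^A_{s,h}(l)$ minus the graph integral and then integrates by parts in $p_2$, rather than integrating by parts on the closed loop first — and your observation about the $1\pm2s$ sign in Equation~\eqref{eq:polP} correctly flags a typo rather than a real obstacle.
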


Note that the values of $p_2$ for which the denominators of each of the last three terms of $R_\mcI(p_2)$
are zero corresponds to the separatrix curves discussed above.

\begin{proof}
 We want to compute the integral in Equation~\eqref{eq:defAct}, where the curve $\beta^s_{l,h}$ is defined by the equation 
 \begin{equation}\label{eqn:beta-proof} 
 h =  A_s^l(p_2) + \sqrt{B_s^l(p_2)}\cos(q_2) -(1-2s).
 \end{equation} and measures the areas represented in Figure~\ref{fig:areas}. Because of the symmetry of $q\mapsto -q$ in Equation~\eqref{eqn:beta-proof}, to compute the integral over $\beta^s_{l,h}$ we can restrict to the portion of $\beta^s_{l,h}$ with $q_2\geq 0$ and multiply by $2$. Moreover, we can solve for $q_2$ in Equation~\eqref{eqn:beta-proof},
$$ q_2(p_2) =\arccos \left(\dfrac{h+(1-2s)-A_s^l(p_2)}{\sqrt{B_s^l(p_2)}} \right).$$  We can thus integrate $q=q_2(p_2)$ between $\ze_2$ and $\ze_3$. However, we will also need to deal with the regions to the left and/or right of the curve by including certain correction terms. Let us illustrate this with the case $s < \tfrac{2}{3}$. We want to compute the ``half-areas" represented in Figure~\ref{fig:halfareas}.
 
\begin{figure}[ht]
 \centering
  	\begin{subfigure}[b]{4.5cm}
       \includegraphics[width=4.5cm]{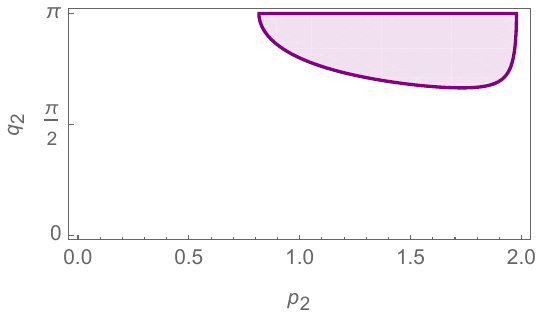}
        \caption*{$h<h_{l^+}$ (Type III)}
    \end{subfigure}
        \begin{subfigure}[b]{4.5cm}
       \includegraphics[width=4.5cm]{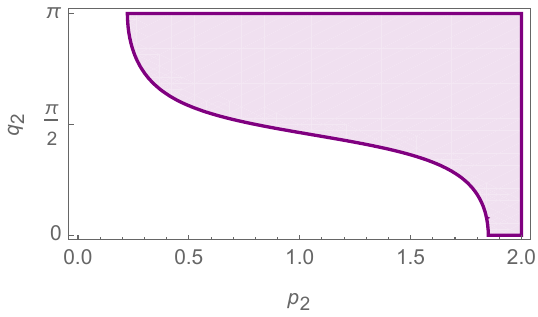}
        \caption*{$h_{l^+}<h<h_{l^-}$ (Type II)}    \end{subfigure}
    \begin{subfigure}[b]{4.5cm}
       \includegraphics[width=4.5cm]{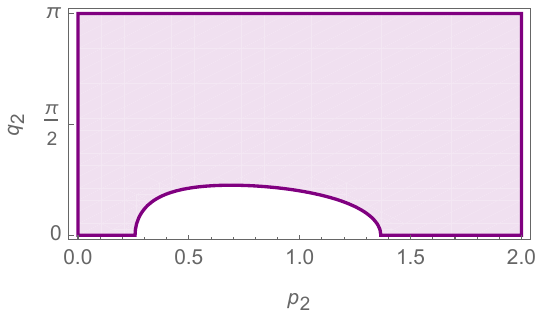}
        \caption*{$h_{l^-}<h$ (Type I)}
    \end{subfigure}
    \caption{Half of the area to be integrated in the case $s< \tfrac{2}{3}$.}
    \label{fig:halfareas}
\end{figure}

The size of the half-rectangles are $(2+l)\pi$ if $l<0$ and $2\pi$ if $l \geq 0$. From this total size, we need to subtract the area below the curve $q_2=q_2(p_2)$ and, depending on the type of the curve, some additional area. 
More specifically, for the right situation (type I) we need not subtract anything extra, for the middle situation (type II) we need to subtract the area to the left of $\ze_2$ and for the left situation (type III) we need to subtract the area to the left of $\ze_2$ and to the right of $\ze_3$. In general, we can write the action integral as

\begin{align}\label{eqn:action-compute}
\mcI(l,h) =  C_{s,h}^A(l) - \frac{1}{\pi}\int_{\ze_2}^{\ze_3} \arccos \left(\dfrac{h+(1-2s)-A_s^l(p_2)}{\sqrt{B_s^l(p_2)}} \right) \dee p_2,
\end{align}

where $C_{s,h}^A(l)$ is given in Table~\ref{table:areaA}.

\begin{table}[ht]
\centering
\setstretch{1.5}
\begin{tabular}{c||c|c|c}
$C_{s,h}^A(l)$&$s< \tfrac{2}{3}$ & $s= \tfrac{2}{3}$ & $s> \tfrac{2}{3}$  \\ \hline\hline
Type I & $l^+-l^-$ & $l^+-l^-$ & $l^+-l^-$ \\
\hline
Type II & $l^+-\ze_2$ & \emph{not possible} & $\ze_3-l^-$ \\
\hline
Type III & $\ze_3 -\ze_2$ & $\ze_3 -\ze_2$ & $\ze_3 -\ze_2$
\end{tabular}
\setstretch{1.0}
\caption{The value of $C_{s,h}^A(l)$ depending on the type of the curve being integrated over.}
\label{table:action-proof}
\label{table:areaA}
\end{table}

The next step is to do integration by parts
\begin{align*}
\mcI(l,h) &= C_{s,h}^A(l) - \frac{p_2}{\pi}\left[ \arccos \left(\dfrac{h+(1-2s)-A^l_s(p_2)}{\sqrt{B^l_s(p_2)}} \right) \right]_{p_2=\ze_2}^{p_2=\ze_3}  \\
&\quad + \dfrac{1}{\pi}\int_{\ze_2}^{\ze_3} p_2 \dfrac{d}{dp_2} \left(\arccos \left(\dfrac{h+(1-2s)-A^l_s(p_2)}{\sqrt{B^l_s(p_2)}} \right)\right) \dee p_2 \\
&= \,
C_{s,h}^B(l) + \dfrac{1}{\pi}\int_{\ze_2}^{\ze_3} R_\mcI(p_2) \frac{ \dee p_2}{\sqrt{P^s_{l,h}(p_2)}}\\
&= \,C_{s,h}^B(l) +\dfrac{1}{2\pi} \oint_{\beta^s_{l,h}} R_\mcI(p_2) \frac{ \dee p_2}{\sqrt{P^s_{l,h}(p_2)}},
\end{align*}
 where $R_\mcI(p_2)$ is as in Equation~\eqref{eqn:R} and
\begin{align*}
C_{s,h}^B(l) &:= C_{s,h}^A(l)- \dfrac{p_2}{\pi} \left[ \arccos \left(\dfrac{h+(1-2s)-A^l_s(p_2)}{\sqrt{B^l_s(p_2)}} \right) \right]_{p_2=\ze_2}^{p_2=\ze_3}.
\end{align*} 
 We list all possible values of $C_{s,h}^B(l)$ in Table~\ref{table:areaB}.
\begin{table}[ht]
\centering
\setstretch{1.5}
\begin{tabular}{c||c|c|c}
$C_{s,h}^B(l)$&$s< \tfrac{2}{3}$ & $s= \tfrac{2}{3}$ & $s> \tfrac{2}{3}$  \\ \hline\hline
Type I & $l^+-l^-$ & $l^+-l^-$ & $l^+-l^-$ \\
\hline
Type II & $l^+$ & \emph{not possible} & $-l^-$ \\
\hline
Type III & $0$ & $0$ & $0$
\end{tabular}
\setstretch{1.0}
\caption{The value of $C_{s,h}^B(l)$ depending on the type of the curve being integrated over.}
\label{table:areaB}
\end{table}
\end{proof}

We now compute the \emph{reduced period} $\mcT$ and the \emph{rotation number} $\mcW$, defined by
\begin{equation}
\mcT(l,h) := 2\pi \dfrac{\partial \mcI}{\partial h},\qquad \mcW(l,h) := -\dfrac{\partial \mcI}{\partial l}.
\label{def:TW}
\end{equation} 

\begin{remark}
Each of the quantities in Equation~\eqref{def:TW} has a geometric interpretation:
since $M$ is compact, any regular fiber of the integrable system $(L,H)$ is a torus, and the quotient of this torus by the circle action generated by $L$ is a circle.
The flow of the Hamiltonian vector field of $H$ descends to this circle, and thus this flow is necessarily periodic,
and the reduced period $\mcT(l,h)$ is the minimal period of this flow. 
This is because $\mcT(l,h)$ computes the relative speed of the flow of $\mathcal{I}$ (which has period $2\pi$) against the speed of the flow of $H$ to determine the time taken for the flow of $H$ to go once around and return to the orbit of the $\mbS^1$-action that it started in.
The rotation number $\mcW(l,h)$ computes the relative speed between the given periodic flow (generated by $L$) and the additional periodic flow generated by $\mathcal{I}$ on a regular fiber.
\end{remark}

\begin{corollary}\label{cor:reduced-period-rot}
The reduced period $\mcT(l,j)$ and the rotation number $\mcW(l,h)$ are given by the complete elliptic integrals
\begin{equation}
\mcT(l,h) = \oint_{\beta^s_{l,h}} \frac{ \dee p_2}{\sqrt{P^s_{l,h}(p_2)}},\qquad \mcW(l,h) = C_{s,h}^C(l)+\dfrac{1}{2\pi} \oint_{\beta^s_{l,h}} R_{\mcW}(p_2) \frac{ \dee p_2}{\sqrt{P^s_{l,h}(p_2)}}
\label{eq:defTW}
\end{equation}
over the elliptic curve $\vartheta^2 = P^s_{l,h}(p_2)$, where
\begin{align*}
R_{\mcW}&(p_2) = \dfrac{s}{2} - \dfrac{1}{2} \dfrac{(h-h_l)}{p_2-l} - \dfrac{1}{2} \dfrac{(h-h_{2+l})}{p_2-2-l}.
\end{align*}
\end{corollary}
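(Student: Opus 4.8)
The plan is to obtain both $\mcT$ and $\mcW$ by differentiating the action integral $\mcI(l,h)$ under the integral sign and invoking the definitions $\mcT = 2\pi\,\partial_h\mcI$ and $\mcW = -\partial_l\mcI$ from \eqref{def:TW}. Rather than use the final elliptic form of Proposition~\ref{prop:4X2act}, I would start from the intermediate expression \eqref{eqn:action-compute},
\[
\mcI(l,h) = C_{s,h}^A(l) - \frac1\pi\int_{\ze_2}^{\ze_3} \arccos\!\bigl(u(p_2)\bigr)\,\dee p_2,\qquad u(p_2) := \frac{h+(1-2s)-A_s^l(p_2)}{\sqrt{B_s^l(p_2)}},
\]
which is cleaner to differentiate. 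The single algebraic fact that powers everything is that along $\beta^s_{l,h}$ one has $1-u(p_2)^2 = P^s_{l,h}(p_2)/B_s^l(p_2)$ (equivalently $\sin^2 q_2 = P/B$), with $P^s_{l,h}$ the degree-four polynomial of \eqref{eq:polP}, so that $\sqrt{1-u^2} = \sqrt{P^s_{l,h}}/\sqrt{B_s^l}$ on the (interior of the) range of integration. The geometric interpretation recalled just before the corollary gives an independent check: Hamilton's equations on $(M^{\mathrm{red},l},\dee q_2\wedge\dee p_2)$ give $\dot p_2 = -\partial_{q_2}H_s^{\mathrm{red},l} = \sin(q_2)\sqrt{B_s^l(p_2)} = \pm\sqrt{P^s_{l,h}(p_2)}$, so the period of the reduced flow is $\oint\dee t = \oint \dee p_2/\sqrt{P^s_{l,h}}$, which already pins down the formula for $\mcT$.

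For $\mcT$: since $A_s^l,B_s^l$ do not involve $h$, we get $\partial_h u = 1/\sqrt{B_s^l}$ and hence $\partial_h\arccos(u) = -\partial_h u/\sqrt{1-u^2} = -1/\sqrt{P^s_{l,h}}$. Differentiating with the Leibniz rule, the endpoint terms and $\partial_h C_{s,h}^A(l)$ cancel (for a type~I curve $C^A$ is $h$-independent and $\arccos(u)=0$ at $\ze_2,\ze_3$; for types~II and III the $h$-dependence of $C^A$ cancels that of the moving endpoints $\ze_i$), leaving $\mcT = 2\pi\,\partial_h\mcI = 2\int_{\ze_2}^{\ze_3}\dee p_2/\sqrt{P^s_{l,h}} = \oint_{\beta^s_{l,h}} \dee p_2/\sqrt{P^s_{l,h}}$. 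For $\mcW$: a short computation gives
\[
\partial_l\arccos(u) = \frac{1}{\sqrt{P^s_{l,h}}}\left(\partial_l A_s^l + \tfrac12\bigl(h+(1-2s)-A_s^l\bigr)\frac{\partial_l B_s^l}{B_s^l}\right),
\]
and here the key simplification is that the factors $p_2$ and $(p_2-4)$ cancel between $\partial_l B_s^l$ and $B_s^l$, giving $\partial_l B_s^l/B_s^l = -2(p_2-1-l)/\bigl((p_2-l)(p_2-2-l)\bigr)$; a partial-fraction split, using that $A_s^l$ is affine in $p_2$ and that $h+(1-2s)-A_s^l(l) = h - h_l$ and $h+(1-2s)-A_s^l(2+l) = h - h_{2+l}$ with $h_l, h_{2+l}$ as in \eqref{eqn:h0-hl-h4-h2l}, collapses the bracket to exactly $R_{\mcW}(p_2)$. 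Combining with the observation that $-\partial_l C_{s,h}^A(l)$ is the piecewise-constant function $C^C_{s,h}(l)$ (the $l$-derivative of the piecewise-linear entries of Table~\ref{table:areaA}, with boundary terms again absorbed), we obtain $\mcW = C^C_{s,h}(l) + \frac{1}{2\pi}\oint_{\beta^s_{l,h}} R_{\mcW}\,\dee p_2/\sqrt{P^s_{l,h}}$.

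The main obstacle is not conceptual but bookkeeping: one must run the Leibniz differentiation of $\int_{\ze_2(l,h)}^{\ze_3(l,h)}\arccos(u)\,\dee p_2$ separately in each of the three curve regimes (types~I, II, III) and the three parameter regimes $s\lessgtr\tfrac23$, using $P^s_{l,h}(\ze_i)=0$ (so $u(\ze_i)=\pm1$ and $\arccos(u(\ze_i))\in\{0,\pi\}$) to evaluate the endpoint contributions, and check case by case that they combine with $\partial_h C_{s,h}^A$, resp.\ $\partial_l C_{s,h}^A$, to give $0$, resp.\ $C^C_{s,h}(l)$. A convenient shortcut is to establish both identities first for a single curve type (type~I, where all endpoint terms vanish) and then extend them by continuity of $\mcT$, $\mcW$ and of the right-hand sides in $h$ across the separatrix values $h_{l^\pm}$. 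The remaining ingredient — the elementary rational-function identity turning $\partial_l\arccos(u)$ into $R_{\mcW}/\sqrt{P^s_{l,h}}$ — is routine once the cancellation in $\partial_l B_s^l/B_s^l$ is noticed.
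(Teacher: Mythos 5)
Your proposal is correct and follows essentially the same route as the paper: both differentiate the action integral in its $\arccos$ form (Equation~\eqref{eqn:action-compute}) under the integral sign, using $\mcT = 2\pi\,\partial_h\mcI$ and $\mcW = -\partial_l\mcI$, with the same identity $1-u^2 = P^s_{l,h}/B_s^l$ and the same partial-fraction collapse producing $R_{\mcW}$, the only cosmetic difference being that you track the endpoint contributions against $C^A_{s,h}$ while the paper absorbs them into $C^B_{s,h}$ and sets $C^C_{s,h} = -\partial_l C^B_{s,h}$. Your dynamical check of $\mcT$ via the reduced Hamiltonian flow and the explicit case-by-case (or continuity) treatment of the boundary terms simply make explicit bookkeeping that the paper leaves implicit.
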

\begin{proof}
We compute the derivatives given in Equation~\eqref{def:TW} using the expression for $\mcI(l,h)$ from Proposition~\ref{prop:4X2act}
and the explicit formula in Equation~\eqref{eqn:action-compute}. For the reduced period, we have
\begin{align*}
\mcT(l,h) &= 2\pi \dfrac{\partial \mcI}{\partial h}(l,h) = -2 \frac{\partial}{\partial h}\int_{\ze_2}^{\ze_3} \arccos \left(\dfrac{h+(1-2s)-A_s^l(p_2)}{\sqrt{B_s^l(p_2)}} \right) \dee p_2 \\&= 2 \int_{\ze_2}^{\ze_3} \frac{ \dee p_2}{\sqrt{P^s_{l,h}(p_2)}} = \oint_{\beta^s_{l,h}} \frac{ \dee p_2}{\sqrt{P^s_{l,h}(p_2)}}
\end{align*} and we proceed similarly for $\mcW$,
\begin{align*}
\mcW(l,h) &= - \dfrac{\partial \mcI}{\partial l}(l,h) = - \dfrac{\partial C_{s,h}^B(l)}{\partial l} + \dfrac{1}{\pi} \int_{\ze_2}^{\ze_3} R_{\mcW}(p_2) \frac{ \dee p_2}{\sqrt{P^s_{l,h}(p_2)}} \\&= 
C_{s,h}^C(l) + \dfrac{1}{2\pi} \oint_{\beta^s_{l,h}} R_{\mcW}(p_2) \frac{ \dee p_2}{\sqrt{P^s_{l,h}(p_2)}},
\end{align*} where 
\begin{align*}
R_{\mcW}&(p_2) = \dfrac{s}{2} - \dfrac{1}{2} \dfrac{h-h_l}{p_2-l} - \dfrac{1}{2} \dfrac{h-h_{2+l}}{p_2-2-l}
\end{align*} and
the values of 
$$ C_{s,h}^C(l) := - \dfrac{\partial C_{s,h}^B(l)}{\partial l}$$ are given in Table~\ref{table:areaC}.
\end{proof}

\begin{table}[ht]
\centering
\setstretch{1.5}
\begin{tabular}{l||l|l|l|l}
&Range of $h$ & $l<0$ & $0<l<2$ & $2<l$  \\ \hline\hline 
$s< \tfrac{2}{3} \qquad$&$h> h_{l^-}$ & $-1$ & $0$ & $1$ \\ \hline
&$ h_{l^-} > h > h_{l^+}$ & $-1$ & $-1$ & $0$ \\ \hline
&$h_{l^+} > h$ & $0$ & $0$ & $0$ \\ \hline\hline
$s= \tfrac{2}{3} \qquad$&$h> h_{l^\pm}$ & $-1$ & $0$ & $1$ \\ \hline
&$h_{l^\pm} > h$ & $0$ & $0$ & $0$ \\ \hline\hline
$s> \tfrac{2}{3} \qquad$&$h> h_{l^+}$ & $-1$ & $0$ & $1$ \\ \hline
&$ h_{l^+} > h > h_{l^-}$ & $0$ & $1$ & $1$ \\ \hline
&$h_{l^-} > h$ & $0$ & $0$ & $0$ \\
\end{tabular}
\setstretch{1.0}
\caption{Values of $C_{s,h}^C(l)$.}
\label{table:areaC}
\end{table}

The integrals in~\eqref{eq:defTW} can be expressed in terms of two basic integrals, namely 
\begin{equation}
\mcN_A := \int^{\ze_3}_{\ze_2} \dfrac{\dee p_2}{\vartheta},\qquad \mcN_{B,\eta} := \int^{\ze_3}_{\ze_2} \dfrac{1}{p_2-\eta} \dfrac{\dee p_2}{\vartheta},
\label{NANB}
\end{equation} where $\eta$ is a constant. More precisely, we have
\begin{equation} \mcT = 2 \mcN_A,\qquad \mcW = C^C_{s,h} + \dfrac{1}{2\pi} \left( \dfrac{s}{2} \mcN_A - \dfrac{h-h_l}{2} \mcN_{B,l} - \dfrac{h-h_{2+l}}{2} \mcN_{B,2+l} \right). 
\label{eq:TWN}
\end{equation}
The two integrals in \eqref{NANB} can be rewritten in Legendre's standard form by changing the integration variable $x$ and defining the parameters $k, n_\eta$, where
$$ x := \sqrt{\dfrac{\ze_3-p_2}{\ze_3-\ze_2}},\qquad  k^2 := \dfrac{\ze_3-\ze_2}{\ze_3-\ze_1},\qquad n_\eta := \dfrac{\ze_3-\ze_2}{\ze_3-\eta}.$$ We write now $\mcN_A$ in terms of the complete elliptic integral of first kind
\begin{equation}
\mcN_A = \dfrac{\sqrt{2}}{\sqrt{\ze_3-\ze_1}} \int_0^1 \dfrac{\dee x}{\sqrt{(1-x^2)(1-k^2x^2)}} = \dfrac{\sqrt{2}}{\sqrt{\ze_3-\ze_1}} K(k)
\label{eq:NAEl}
\end{equation} and $\mcN_{B,\eta}$ as a complete elliptic integral of third kind
\begin{align}
\nonumber \mcN_{B,\eta} &= \dfrac{\sqrt{2}}{(\ze_3 - \eta)\sqrt{\ze_3-\ze_1}} \int_0^1 \dfrac{\dee x}{(1-n_\eta x^2)\sqrt{(1-x^2)(1-k^2x^2)}} \\&= \dfrac{\sqrt{2}}{(\ze_3 - \eta)\sqrt{\ze_3-\ze_1}} \Pi(n_\eta,k).
\label{eq:NBEl}
\end{align}

\subsection{Preparations for the Taylor series invariant}\label{sec:prep-for-TS}
We now set up the general theory to compute the Taylor series invariant of the system given in Equation~\eqref{eqn_ssys}. We will use the method from Alonso \& Dullin \& Hohloch \cite{ADH2}, based on the properties of complex elliptic curves and the expansions of the reduced period and the rotation number. It is similar to the method introduced by Dullin \cite{Du} and used in Alonso \& Dullin \& Hohloch \cite{ADH}.

The elliptic integrals in Equations~\eqref{eq:defAct} and~\eqref{eq:defTW} go along the real elliptic curve $\vartheta^2 = P^s_{l,h}(p_2)$, where $P^s_{l,h}$ was defined in \eqref{eq:polP}. However, we can also consider the variables as complex numbers,
$$ \Gamma_l = \left\{ \left. (p_2,\vartheta) \in \overline{\C}^2 \;\right|\; \vartheta^2 = P^s_{l,h}(p_2) \right\}, $$
where $\overline{\mathbb{C}} = \mathbb{C}\cup\{\infty\}$ is the Riemann sphere.  
Since  $P^s_{l,h}(p_2)$ is a polynomial of degree 4, the curve $\Gamma_l$ is homeomorphic to a two-torus, and thus its first homotopy group is generated by two cycles, as represented in Figure \ref{fig:elliptic}. 
We will work with the distinguished cycles $\alpha=\al^s_{l,h}$ and $\beta=\beta^s_{l,h}$ which have the properties we describe now.
The \emph{real cycle} $\beta$, defined in Equation~\eqref{eqn:beta}, corresponds to the real elliptic curve and connects the roots $\ze_2$ and $\ze_3$. The other cycle, $\alpha$, is known as the \emph{imaginary cycle} and connects the roots $\ze_1$ and $\ze_2$.

\begin{figure}[ht]

\centering
    \begin{subfigure}[b]{0.5\textwidth}
    	\centering
        \includegraphics[width=0.9\textwidth]{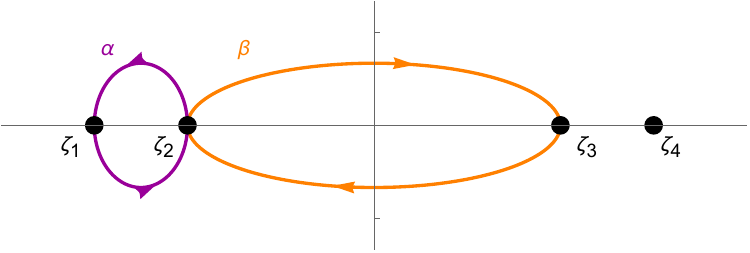}
        \caption{\small{Cycles $\al$ and $\be$ on $\C$.}}
    \end{subfigure}
    \begin{subfigure}[b]{0.4\textwidth}
    	\centering
        \includegraphics[trim=0 2.4cm 0 0.5cm, width=0.8\textwidth]{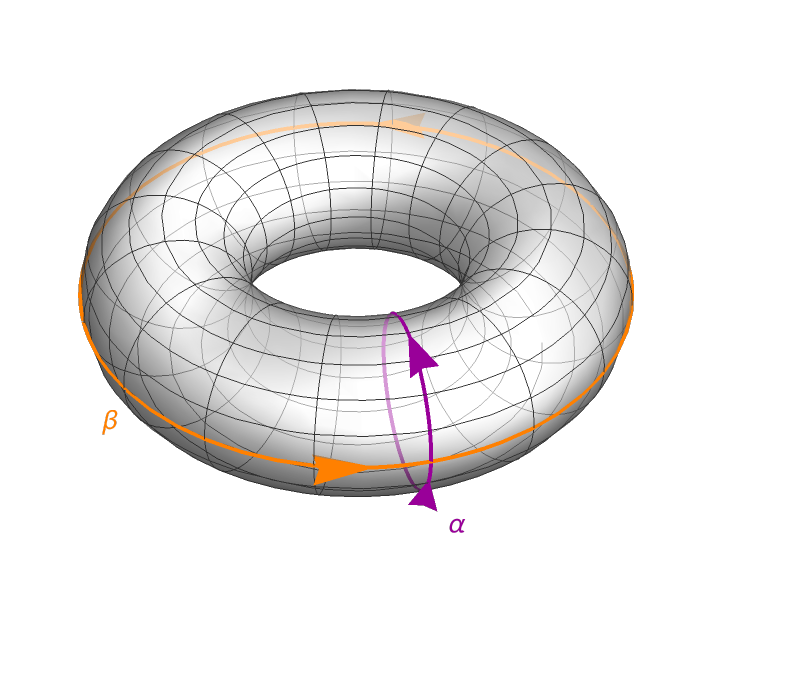}
        \caption{\small{Cycles $\al$ and $\be$ on $\T^2$}}
    \end{subfigure} 
\caption{Complex elliptic curves are homeomorphic to two-tori. The cycle $\beta$ is called the \emph{real cycle}, since it coincides with the elliptic curve when $p_2$ is considered as a real variable, and $\alpha$ is called the \emph{imaginary cycle}. Furthermore, $\alpha$ is also known as the \emph{vanishing cycle}.}
\label{fig:elliptic}
\end{figure}

As we approach the focus-focus value $(l,h)=(0,0)$, the roots $\ze_1$ and $\ze_2$ move closer to each other and coincide in the limit, so one representative of the cycle $\alpha$ collapses.
 For this reason, $\alpha$ is known as the \emph{vanishing cycle}. 
 The elliptic integrals of the action, the reduced period and the rotation number along this cycle are purely imaginary complex numbers, so we can divide by the imaginary unit and obtain real numbers. We define thus the \emph{imaginary action}
\begin{equation}\label{eqn:im-action} 
J(l,h):=\dfrac{1}{2\pi i} \oint_{\al^s_{l,h}} R_\mcI(p_2) \frac{ \dee p_2}{\sqrt{P^s_{l,h}(p_2)}},
\end{equation}
the \emph{imaginary reduced period} and the  \emph{imaginary rotation number}, 
$$ \mathcal{T}^{\al}(l,h) := \dfrac{1}{i}\oint_{\al^s_{l,h}} \frac{ \dee p_2}{\sqrt{P^s_{l,h}(p_2)}},\qquad \mathcal{W}^{\al}(l,h) := \dfrac{1}{2\pi i} \oint_{\al^s_{l,h}} R_W(p_2) \frac{ \dee p_2}{\sqrt{P^s_{l,h}(p_2)}}$$
all defined along the vanishing cycle $\alpha^s_{l,h}$ of the elliptic curve $\vartheta^2 = P^s_{l,h}(p_2)$, where $P^s_{l,h}$ is as given in Equation~\eqref{eq:polP}.
 Since the cycle $\alpha_{l,h}^s$ vanishes as we approach the focus-focus singular value, the Taylor expansion of these quantities around the singular value can be obtained using the residue theorem, which we perform in the following section.

\begin{remark} \label{rmk:varrho_is_J}
It was first observed by Dullin \cite{Du} that the imaginary action coincides with the second component of the local diffeomorphism $\varrho$ from Theorem \ref{EliassonMZ}, that is, $\varrho(l,h) = (l,J(l,h))$. See also the diagrams in Figures \ref{fig:diagram-local} and \ref{fig:diagram-semilocal}. By inverting $j = J(l,h)$ we will obtain the Birkhoff normal form of the focus-focus singularity as $h = Z(l,j)$. This will allow us to express the action integral \eqref{eq:defAct} as a function of $(l,j)$ instead of $(l,h)$. In other words, $\varrho^{-1}(l,j) = (l,Z(l,j))$. For more details on this construction, see Dullin \cite{Du} and Alonso \& Dullin \& Hohloch \cite{ADH,ADH2}.
\end{remark}

\begin{remark}
 Note that when approaching the other focus-focus point of the system, the roots $\ze_3$ and $\ze_4$ come together and
 coincide, causing a different representative of the same cycle $\alpha$ to collapse.
\end{remark}

 Since the Taylor series invariant has no constant term, it is fully determined by its partial derivatives.
To determine the partial derivatives of the Taylor series invariant associated to the singularity $\mathcal{N} \times \mathcal{S}$, we will combine the Birkhoff normal form with the following result, which relates the real and imaginary versions of the reduced period and the rotation number. 

\begin{theorem}[Theorem 3.9 of \cite{ADH2}]\label{thm:birk-to-partials}
Let $p\in M$ be a focus-focus singular point.
Let $Z(l,j)$ be the associated Birkhoff normal form and $w = l+ij$ where $j$ is the value of the imaginary action $J$. 
Let $S(l,j)$ denote the desingularized action at $p$.
Then
$$
\begin{aligned}
\frac{\partial S}{\partial l} (l,j) &= 2\pi \left.\left( \mathcal{W}^\alpha (l,h) \frac{\mathcal{T}(l,h)}{\mathcal{T}^\alpha(l,h)} - \mathcal{W}(l,h) \right)\right|_{h=Z(l,j)} \hspace{-0.2cm}+ \arg(w), \\[0.2cm] \frac{\partial S}{\partial j}(l,j)  &= 2\pi \left.\frac{\mathcal{T}(l,h)}{\mathcal{T}^\alpha(l,h)}\right|_{h=Z(l,j)}\hspace{-0.2cm}+\ln |w|.
\end{aligned}
$$ 
\end{theorem}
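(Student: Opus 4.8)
The plan is to start from the very definition of the desingularized action and transport everything through the change of variables $(l,h)\leftrightarrow(l,j)$ furnished by the Birkhoff normal form, reducing the statement to two identities expressing the real and imaginary periods as derivatives of the real and imaginary action integrals. By Equation~\eqref{eqn:def-of-S} (together with Equation~\eqref{eq:semigact}), writing $w=l+ij$ we have $S(l,j)=2\pi I(w)-2\pi I(0)+\mathrm{Im}(w\log w-w)$, where $I$ is the action of Equation~\eqref{eq:semigact}. By the relation $\mcI=I\circ\varrho$ recorded in Section~\ref{ss:action} and the fact that $\varrho(l,h)=(l,J(l,h))$ with inverse $(l,j)\mapsto(l,Z(l,j))$ (Remark~\ref{rmk:varrho_is_J}), this reads $I(w)=\mcI(l,Z(l,j))$. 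A direct computation, using $\partial_l w=1$ and $\partial_j w=i$, gives $\partial_l\,\mathrm{Im}(w\log w-w)=\mathrm{Im}(\log w)=\arg w$ and $\partial_j\,\mathrm{Im}(w\log w-w)=\mathrm{Re}(\log w)=\ln|w|$. Hence, setting $I(l,j):=\mcI(l,Z(l,j))$, the theorem reduces to
\[
2\pi\,\partial_l I = 2\pi\Bigl(\mcW^\al\,\tfrac{\mcT}{\mcT^\al}-\mcW\Bigr),
\qquad
2\pi\,\partial_j I = 2\pi\,\tfrac{\mcT}{\mcT^\al},
\]
with all the $(l,h)$-dependent quantities evaluated at $h=Z(l,j)$.

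To obtain these I would apply the chain rule, $\partial_j I=(\partial_h\mcI)(\partial_j Z)$ and $\partial_l I=\partial_l\mcI+(\partial_h\mcI)(\partial_l Z)$, together with implicit differentiation of the relation $J(l,Z(l,j))=j$ --- legitimate because $h\mapsto J(l,h)$ is a local diffeomorphism near the focus-focus value, $\partial_h J$ being nonzero there --- which yields $\partial_j Z=(\partial_h J)^{-1}$ and $\partial_l Z=-(\partial_l J)(\partial_h J)^{-1}$. By the definitions in Equation~\eqref{def:TW} we already have $\mcT=2\pi\,\partial_h\mcI$ and $\mcW=-\partial_l\mcI$. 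Thus the whole statement comes down to establishing the imaginary-cycle analogues $\mcT^\al=2\pi\,\partial_h J$ and $\mcW^\al=-\partial_l J$.

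These two are the substantive point, and they are the imaginary-cycle counterpart of Corollary~\ref{cor:reduced-period-rot}. I would prove them by differentiating the defining elliptic integral for $J$ (Equation~\eqref{eqn:im-action}) under the integral sign and using that the meromorphic differential $R_\mcI(p_2)\,\dee p_2/\sqrt{P^s_{l,h}(p_2)}$ on the elliptic curve $\vartheta^2=P^s_{l,h}(p_2)$ satisfies $\partial_h\bigl(R_\mcI\,\dee p_2/\sqrt{P^s_{l,h}}\bigr)\equiv \dee p_2/\sqrt{P^s_{l,h}}$ and $\partial_l\bigl(R_\mcI\,\dee p_2/\sqrt{P^s_{l,h}}\bigr)\equiv -R_\mcW\,\dee p_2/\sqrt{P^s_{l,h}}$ modulo exact forms, where $R_\mcW$ is the integrand of Corollary~\ref{cor:reduced-period-rot}; these congruences are exactly what the integration-by-parts computation behind Proposition~\ref{prop:4X2act} and Corollary~\ref{cor:reduced-period-rot} produces, and since they are statements about differential forms they hold cycle-by-cycle, in particular along the vanishing cycle $\al$. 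The correction terms $C^B_{s,h}$ and $C^C_{s,h}$ are absent in the imaginary case because $\al$ bounds none of the rectangular regions of $M^{\text{red},l}$ that generate them. I expect the main obstacle to be the careful pole bookkeeping at this step: $R_\mcI$ has genuine poles at the separatrix values $p_2=l$, $p_2=4$, $p_2=2+l$, so one must check that the exact forms arising in the period computation contribute no residues when integrated over $\al$ (as opposed to over $\be$), while keeping the normalization by $i$ on the imaginary periods consistent throughout.

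Finally, assembling the pieces: substituting $\partial_h\mcI=\mcT/(2\pi)$, $\partial_l\mcI=-\mcW$, $\partial_h J=\mcT^\al/(2\pi)$ and $\partial_l J=-\mcW^\al$ into the chain-rule expressions gives $\partial_j I=\mcT/\mcT^\al$ and $\partial_l I=\mcT\mcW^\al/\mcT^\al-\mcW$; multiplying by $2\pi$, adding back the logarithmic terms $\arg w$ and $\ln|w|$ computed in the first paragraph, and restricting the $(l,h)$-dependent quantities to $h=Z(l,j)$ yields precisely the two displayed formulas.
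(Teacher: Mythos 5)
The paper itself contains no proof of this statement: it is imported verbatim as Theorem~3.9 of the cited reference [ADH2], so there is no internal argument to compare against. Your route is essentially the standard derivation used there (and in Dullin's earlier work): differentiate $S=2\pi I-2\pi I(0)+\mathrm{Im}(w\log w-w)$, pass from $(l,h)$ to $(l,j)$ through the Birkhoff normal form by the chain rule, and feed in $\mcT=2\pi\,\partial_h\mcI$, $\mcW=-\partial_l\mcI$ together with the imaginary-cycle counterparts $\mcT^\al=2\pi\,\partial_h J$, $\mcW^\al=-\partial_l J$; your logarithmic bookkeeping and the final assembly are correct.

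The one step you leave at sketch level is exactly those imaginary-cycle identities, and your justification as stated is slightly too quick: the paper's computations behind Proposition~\ref{prop:4X2act} and Corollary~\ref{cor:reduced-period-rot} are carried out via the $\arccos$ representation on the real interval $[\ze_2,\ze_3]$, and equality of $\be$-periods alone does not control $\al$-periods, so ``the congruences hold modulo exact forms'' needs an actual argument. It can be supplied: the partial-fraction structure of $R_\mcI$ shows that $R_\mcI(p_2)\,\dee p_2/\sqrt{P^s_{l,h}(p_2)}$ coincides, as a meromorphic $1$-form on $\Gamma_l$, with $-p_2\,\dee q_2$ where $\cos q_2=(h+(1-2s)-A^l_s)/\sqrt{B^l_s}$; differentiating under the integral along a fixed representative of $\al$ away from the branch points, using $\partial_h q_2=\mp 1/\sqrt{P^s_{l,h}}$ and integrating by parts against the single-valued rational function $p_2/\sqrt{P^s_{l,h}}$ on $\Gamma_l$ (whose differential has vanishing period over \emph{every} cycle, so no residue issue arises), gives $\partial_h$ of the $\al$-period of $R_\mcI\,\dee p_2/\sqrt{P^s_{l,h}}$ equal to the $\al$-period of $\dee p_2/\sqrt{P^s_{l,h}}$, i.e.\ $\mcT^\al=2\pi\,\partial_h J$; the analogous manipulation in $l$ gives $\mcW^\al=-\partial_l J$, and, as you say, no correction constants appear for $\al$. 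What remains is only fixing the orientation of $\al$ and the factor $i$ consistently with the paper's conventions so that the signs in the two displayed formulas come out as stated; with that done, your proof is complete and agrees in substance with the argument in [ADH2].
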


Note that there are various choices of desingularized action $S$, cf.\ Remark \ref{rmk:nonunique}. In this theorem we will obtain the one corresponding to our action \eqref{eq:defAct}, which is related to $S$ by \eqref{eqn:def-of-S}. The function $S$ is thus defined up to addition of flat functions, but since we are only using the series expansion $Z(l,j)$, the theorem holds for any such choice.

\subsection{Computing the Taylor series invariant}
\label{sec:proof-of-Taylor}
 
With Theorem~\ref{thm:birk-to-partials} in hand, we now proceed to complete the computation of the Taylor series.
In order to do this, we must compute the Birkhoff normal form, the reduced period, the rotation number, the imaginary reduced period, and the imaginary rotation number. Proceeding analogously to~\cite{ADH2}, we now calculate expansions of each of these quantities.
 
To obtain the expansions in this article, we scaled the parameters by $\varepsilon$ and expanded with respect to $\varepsilon$. That is, we replaced $(l,h)$ by $(\varepsilon l, \varepsilon h)$ and expanded around $\varepsilon=0$, which corresponds to the focus-focus point $(l,h)=(0,0)$.
Finally, we set $\varepsilon=1$ to obtain the desired Taylor series expansions.
In the following, we use $\mcO(N)$ to denote terms of order $N$ or higher in the variables $l$ and $j$.

\begin{lemma}\label{lem:birkhoff}
The Birkhoff normal form around the focus-focus point is given by
\begin{small}
\begin{equation*}
	\begin{aligned}
	Z(l,j) &= \dfrac{1}{4} (j\, \ra + l\, (2 - s))  +  \dfrac{(1 - s)^2 {s}^2 (2 j\ l\ \ra - 9 j^2 (2 - 3 s) - 
   3 l^2 (2 - 3 s)) }{4\ \ra^2} + \mcO (3),
	\end{aligned}
\end{equation*} where $\rho_2$ is as in Equation~\eqref{eqn:rho}.
\end{small}
\end{lemma}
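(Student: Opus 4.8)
The plan is to obtain $Z(l,j)$ by inverting the relation $j=J(l,h)$: recall from Remark~\ref{rmk:varrho_is_J} that the second component of the normal-form diffeomorphism equals the imaginary action, $\varrho(l,h)=(l,J(l,h))$, so $\varrho^{-1}(l,j)=(l,Z(l,j))$ and the Birkhoff normal form $h=Z(l,j)$ is exactly the functional inverse of $j=J(l,h)$. Thus there are two tasks: first, compute the Taylor expansion of $J(l,h)$ of Equation~\eqref{eqn:im-action} to second order around the focus-focus value $(l,h)=(0,0)$; second, invert the resulting two-variable power series. For the first task I would follow the residue method of Alonso \& Dullin \& Hohloch~\cite{ADH2}: rescale $(l,h)\mapsto(\varepsilon l,\varepsilon h)$, expand everything in powers of $\varepsilon$, evaluate the integral over the vanishing cycle $\alpha^s_{l,h}$ by residues, and set $\varepsilon=1$ at the end.

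Before integrating I would record the structure of the quartic $P^s_{l,h}$ from Equation~\eqref{eq:polP}. At $(l,h)=(0,0)$ it factors as $P^s_{0,0}(p_2)=p_2^2\,Q(p_2)$ with $Q(p_2)=s^2(1-s)^2(p_2-4)(p_2-2)-(1-\tfrac{3s}{2})^2$ quadratic, so the two roots $\zeta_1,\zeta_2$ bounding the vanishing cycle collide at $p_2=0$ while $\zeta_3,\zeta_4$ tend to the roots of $Q$; one computes $\zeta_3\zeta_4\to\rho_2^2/(4s^2(1-s)^2)$ and $\zeta_3+\zeta_4\to 6$, which is how $\rho_2$ enters, and since the integrand depends on $\zeta_3,\zeta_4$ only through the symmetric combination $(p_2-\zeta_3)(p_2-\zeta_4)$, the answer can involve $\rho_2$ but not $\rho_1$. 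One also checks $P^s_{\varepsilon l,\varepsilon h}(0)=-\varepsilon^2(h-l(1-s))^2$ and that $R_\mcI$ of Equation~\eqref{eqn:R} vanishes at $p_2=0$ when $(l,h)=(0,0)$, which forces $\zeta_1,\zeta_2=O(\varepsilon)$ (rather than $O(\sqrt{\varepsilon})$) and ensures that $J$ has an expansion in integer powers of $\varepsilon$. I would then evaluate the vanishing-cycle integral $\tfrac{1}{2\pi i}\oint_{\alpha^s_{l,h}}R_\mcI(p_2)\,\dee p_2/\sqrt{P^s_{l,h}(p_2)}$ by the residue theorem exactly as in~\cite{ADH2} --- the one subtlety being the pole of the integrand at the movable point $p_2=l$, which is enclosed by $\alpha^s_{l,h}$ and is dealt with via its residue (here $B_s^l(l)=0$, so $P^s_{l,h}(l)$ is an explicit perfect square) --- expand the result in $\varepsilon$ using the root expansions, and set $\varepsilon=1$. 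This yields $J(l,h)=J_1(l,h)+J_2(l,h)+\mcO(3)$ with $J_1$ homogeneous linear and $J_2$ homogeneous quadratic, each a rational function of $s$ and $\rho_2$, and $\partial_h J_1(0,0)=4/\rho_2\neq 0$.

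For the second task I would invert $j=J_1(l,h)+J_2(l,h)+\mcO(3)$ order by order in $\R_0[[l,j]]$. Since $\partial_h J_1(0,0)\neq 0$, the linear part is invertible and gives $Z_1(l,j)=\tfrac14\bigl(j\,\rho_2+l\,(2-s)\bigr)$; substituting $h=Z_1(l,j)+Z_2(l,j)+\mcO(3)$ back into the relation and collecting the homogeneous degree-two part determines $Z_2(l,j)=\dfrac{(1-s)^2 s^2\bigl(2jl\,\rho_2-9j^2(2-3s)-3l^2(2-3s)\bigr)}{4\rho_2^2}$, which is the asserted expansion.

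The main difficulty is organizational rather than conceptual: one must carry the $\varepsilon$-expansions of all four roots of $P^s_{l,h}$ and of $R_\mcI$ to the correct order, keep the branch of $\sqrt{P^s_{l,h}}$ --- equivalently the orientation of $\alpha^s_{l,h}$, which is pinned down by the semitoric normalization $\partial_h J>0$ of Remark~\ref{rmk:varrho_is_J} --- consistent throughout, account correctly for the residue at $p_2=l$, and then perform the two-variable series inversion; as in~\cite{ADH,ADH2}, this is most safely carried out with a computer algebra system. Two useful internal consistency checks are that the coefficient $\rho_2/4$ of $j$ in $Z_1$ is exactly what the normalization $\partial_h J(0,0)=4/\rho_2$ forces, and that $\rho_1$ is absent from the final formula, as it must be since $J$ sees $\zeta_3,\zeta_4$ only symmetrically.
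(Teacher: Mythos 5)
Your proposal follows essentially the same route as the paper's proof: expand the imaginary action $J(l,h)$ of Equation~\eqref{eqn:im-action} around the focus-focus value via the residue theorem over the vanishing cycle (the technique of Alonso--Dullin--Hohloch), then invert $j=J(l,h)$ order by order to obtain $h=Z(l,j)$. The extra details you supply (the factorization $P^s_{0,0}(p_2)=p_2^2Q(p_2)$, the symmetric appearance of $\zeta_3,\zeta_4$ explaining why only $\rho_2$ occurs, and the residue at the movable pole $p_2=l$) are consistent with, and a correct elaboration of, the paper's argument.
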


\begin{proof}We apply the same technique as in the proof of~\cite[Lemma 3.8]{ADH2}.
The imaginary action \eqref{eqn:im-action} is an elliptic integral along the cycle $\alpha_{l,h}^s$, which vanishes as $(l,h)$ approaches the singular value $(0,0)$. This means that we can use the residue theorem of complex analysis and obtain a series expansion of $J(l,h)$ around $(0,0)$:
\begin{align*}
	J(l,h) &= \dfrac{1}{\ra} ( - (2-s)l+4h) \\&+ \dfrac{8s^2(1-s^2)}{\ra^5} \left((44s^5-128 s^4+115 s^3-28 s^2+2s-4)l^2  \right. \\& \hspace{2cm}\left. + 2(16s^4-32s^3+25s^2-30 s+16)hl-18(2-3s)h^2  \right) + \mcO (3).
\end{align*}
 We then solve $j=J(l,h)$ for $h$, obtaining the Birkhoff normal form $h = Z(l,j)$. 
\end{proof}

We are now prepared to prove Theorem~\ref{thm:Taylor-intro}.

\begin{proof}[Proof of Theorem~\ref{thm:Taylor-intro}]
We start by computing the Taylor series invariant at the $\mathcal{N}\times\mathcal{S}$ singularity, which we denote by $S_{1,s}^\infty(l,h)$.
Since we wrote $\mathcal{T}$ and $\mathcal{W}$ as elliptic integrals in Legendre's canonical form in \eqref{eq:TWN}, \eqref{eq:NAEl}, and \eqref{eq:NBEl}, we may apply expansions for elliptic integrals to obtain expansions for $\mathcal{T}$ and $\mathcal{W}$ similar as it was done in~\cite{ADH}.
Furthermore, $\mathcal{T}^\alpha$ and $\mathcal{W}^\alpha$ are obtained as derivatives of the Birkhoff normal form given in Lemma~\ref{lem:birkhoff}.
Thus, we can apply the equation given in Theorem~\ref{thm:birk-to-partials} to obtain expansions for the derivatives of $S_{1,s}^\infty(l,h)$.

The partial derivative of the Taylor series invariant with respect to $j$ is:
\begin{equation}\label{eqn:dSdj}
\begin{aligned}
 \frac{\partial S_{1,s}^\infty}{\partial j}(l,j) &= \log \left( \dfrac{\ra^3}{ (1-s)^2 {s}^2\sqrt{2}\rc}  \right) \\
 &+ \dfrac{1}{16 \ra^3 \rc^2} \left( l\ra  (16 - 96 {s} + 360 {s}^2 - 936 {s}^3 + 2693 {s}^4 \right. \\&\qquad  \left.- 6200 {s}^5 + 
 8004 {s}^6 - 5120 {s}^7 + 1280 {s}^8) \right. \\&\qquad  \left.+ j(-96 + 720 {s}   - 7248 {s}^2 + 36312 {s}^3 - 99558 {s}^4 \right. \\&\qquad  \left. + 174957 {s}^5 - 
 211536 {s}^6 + 171924 {s}^7 - 83328 {s}^8 \right. \\&\qquad  \left.+ 17856 {s}^9) \right)   + \dfrac{d_1}{d_2}  + \mcO(3),
\end{aligned}
\end{equation}
where $d_1$ and $d_2$ are as given in Appendix~\ref{appendix:d1d2} and $\rho_1,\rho_2$ are as in Equation~\eqref{eqn:rho}.
From the explicit formula in Appendix~\ref{appendix:d1d2}, for each fixed $s$, we note that $\frac{d_1}{d_2} = d_3^sj^2 + d_4^s jl + d_5^sl^2$ for some constants $d_3^s, d_4^s, d_5^s\in\R$, each depending only on the parameter $s$. 
The partial derivative of the Taylor series invariant with respect to $l$ is given by
\begin{equation}\label{eqn:dSdl}
\begin{aligned}
\frac{\partial S_{1,s}^\infty}{\partial l}(l,j) &= \arctan \left( \dfrac{6-9s}{\ra} \right) + \dfrac{1}{16 \ra^3 \rc^2} \left(  3l(2-3s)(16 - 96 {s} \right. \\& \qquad \left.- 216 {s}^2 + 1944 {s}^3  - 3211 {s}^4 + 424 {s}^5 + 
 3252 {s}^6 \right. \\& \qquad \left.- 2816 {s}^7 + 704 {s}^8) + j \ra (16 - 96 {s} + 360 {s}^2 \right. \\& \qquad \left.- 936 {s}^3 + 2693 {s}^4 - 6200 {s}^5 + 
 8004 {s}^6 - 5120 {s}^7 + 1280 {s}^8)\right) \\&+ \mcO(2)
\end{aligned}
\end{equation}

We can now combine Equations~\eqref{eqn:dSdj} and~\eqref{eqn:dSdl} to obtain the Taylor series invariant up to $\mcO(3)$, as in the statement of the theorem. The formula for $S_{2,s}^\infty(l,j)$ in terms of $S_{1,s}^\infty(l,j)$ is from Lemma~\ref{lem:symmetry}.
\end{proof}

\begin{remark} \label{rmk:right_cycle}
In Equation \eqref{eqn:dSdl} notice that the constant term of $\frac{\partial S}{\partial l}$ lies in $[-\frac{\pi}{2},\frac{3\pi}{2}[$.
Therefore, the desingularised action $S$ that we have chosen is actually the preferred choice $\hat{S}$ described in Lemma \ref{lem:Spref}, and therefore the Taylor series expansion in Theorem~\ref{thm:Taylor-intro} is the preferred one $(\Sp)^\infty$ discussed in Theorem~\ref{thm:twist-intro}. In other words, it turns out that our choice of cycle $\gamma_2^z$ that we made in \eqref{eq:defAct} is actually the preferred cycle $\gamma_\text{pref}^z$ defined in Section \ref{sec:geometricinterpret}, and therefore $\mathcal{I}$ from Equation~\eqref{eq:defAct} satisfies
\[
 \mathcal{I} = \xi \circ \varrho
\]
where $\varrho$ is as in the diagram in Figure~\ref{fig:diagram-semilocal}.
If this would not have been the case, we would simply have had to add or substract multiples of $2\pi l$ to obtain $\hat{S}$ from $S$.
\end{remark}

\subsection{The twisting index invariant}
\label{sec:twist-proof}

The goal of this section is to prove Theorem~\ref{thm:twist-intro}, that is, we compute the twisting index invariant of the system given in Equation~\eqref{eqn_ssys}. We extend the procedure used in Alonso \& Dullin \& Hohloch \cite{ADH,ADH2} to a situation with two focus-focus singularities. That is, we use the Taylor series invariants at each of the focus-focus points to obtain local privileged momentum maps, extend these local maps to the entire manifold, and then compare the images of these maps to the polygon invariant to compute the twisting index.

To be more precise, recall that the twisting index invariant is an assignment of a tuple of integers (one for each focus-focus point) to each representative of the polygon invariant, and furthermore recall that the value of such an assignment on a single given representative uniquely determines its value on all representatives by the group action given in Equation~\eqref{eq:twisact}.
To calculate the twisting index we start by extending the privileged momentum maps $\nu_1$ and $\nu_2$, which are defined in a small neighborhood of the focus-focus point $m_1$ and $m_2$, respectively, to the entire manifold. Recall that, as explained in Subsections \ref{sss:polygon} and \ref{sss:twisting}, each choice of representative of the polygon invariant $\De$ corresponds to a momentum map $\mu_{\De}\colon M\to \R^2$ which is toric away from the preimages of the cuts and satisfies $\De = \mu_{\De}(M)$. Near each focus-focus point $m_r$ we have that 
\[\mu_{\De} = T^{\kappa_r^{\Delta}} \circ \nu_r\]
for some $\kappa_r^{\Delta}\in\Z$. 
For each $r\in\{1,2\}$, our goal is to use the image of $\nu_r$ to determine the choice of polygon $\Delta$ for which $\kappa_r^\Delta=0$. 

After fixing a choice of cut directions, the associated representatives of the polygon invariant are related by applying iterates of the linear map $T$ from Equation~\eqref{eqn:T}, which changes the representatives drastically.
See for instance Figure \ref{fig:polygons-with-kappa-1}.
Thus, a sufficiently accurate approximation of the images of the privileged momentum maps is enough to determine which choice of polygon has a momentum map locally equal to the given privileged momentum map, i.e.~the choice of $\De$ such that $\mu_\De = \nu_r$. 
This representative has index $\kappa_r^\Delta=0$, and therefore repeating this for $r=1$ and $r=2$ completely determines the twisting index invariant of the system. The approximate image shown in Figure~\ref{fig:twisComp} is produced from our expansions of the Taylor series invariant from Theorem~\ref{thm:Taylor-intro}. It is accurate enough to determine the twisting index by comparing it to the polygons in Figure~\ref{fig:polygons-with-kappa-1}, and we will see that the polygon with $\kappa_r^\Delta=0$ for both $r=1$ and $r=2$ is the representative shown in Figure~\ref{fig:twis-cuts-down}.
The twisting index of other polygons is then determined by the
action given in Equation \eqref{eq:twisact}, as shown in Figures~\ref{fig:polygons-with-kappa-1} and~\ref{fig:polygons-with-kappa-2}.

\begin{figure}[ht]
\includegraphics[width=250pt]{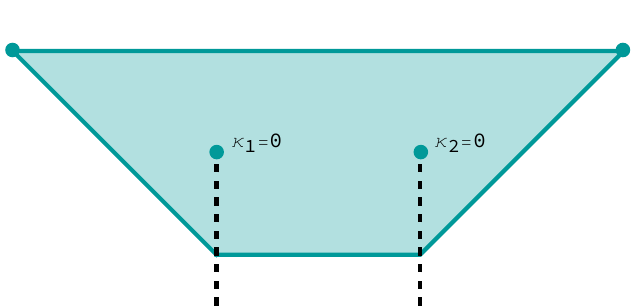}
\caption{A representative of the semitoric polygon. In the proof of Theorem~\ref{thm:twist-intro} we prove that the associated twisting index labels are $(\kappa_1,\kappa_2)=(0,0)$. The indices of the other polygons are then found by applying the action defined in Equation \eqref{eq:twisact}, see Figures~\ref{fig:polygons-with-kappa-1} and~\ref{fig:polygons-with-kappa-2}.
}
\label{fig:twis-cuts-down}
\end{figure}

\begin{figure}[ht]
\centering
    \begin{subfigure}[b]{0.38\textwidth}
    	\centering
        \includegraphics[width=\textwidth]{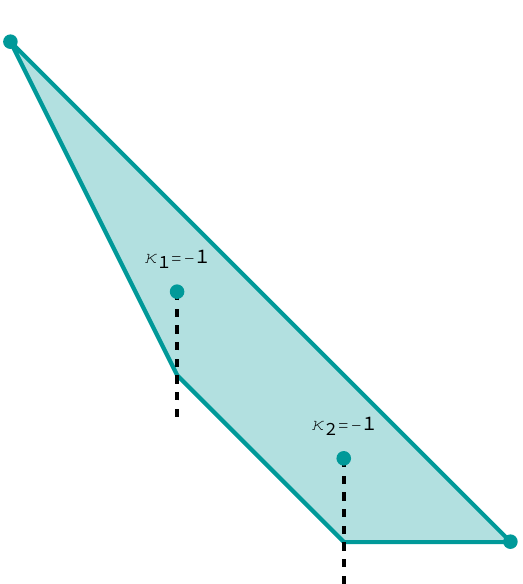}
        \caption{\small{
        Application of $T^{-1}$
        }}
    \end{subfigure}\hspace{-1.6cm}
    \begin{subfigure}[b]{0.42\textwidth}
    	\centering
        \includegraphics[width=\textwidth]{fig13_2_n}
        	\vspace{0.8cm}
        \caption{\small{
        Application of $T^0$
        }}
    \end{subfigure}\hspace{-1.6cm}
    \begin{subfigure}[b]{0.38\textwidth}
    	\centering
        \includegraphics[width=\textwidth]{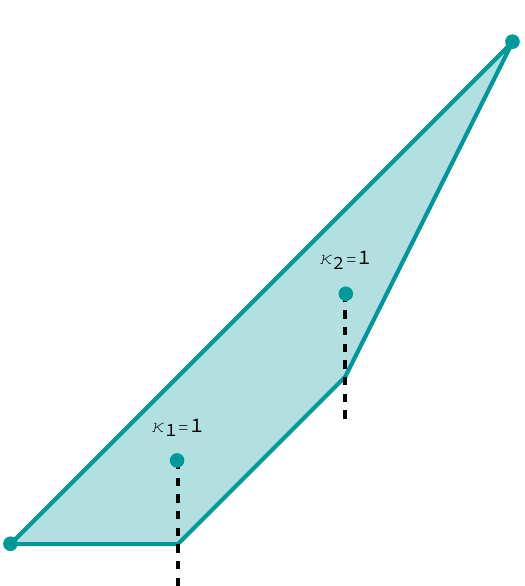}
        \caption{\small{
        Application of $T^1$
        }}
    \end{subfigure}\\
    \caption{Polygons with $\varepsilon_1 = +1$, $\varepsilon_2 = +1$ obtained by applying powers of $T$ to the polygon from Figure~\ref{fig:twis-cuts-down} with the twisting indices labeled.}
    \label{fig:polygons-with-kappa-1}
\end{figure}

\begin{remark}\label{rmk:zeros}
Note that in our case there is a single representative of the polygon invariant which turns out to have both $\kappa_1=0$ and $\kappa_2=0$ (the one shown in Figure~\ref{fig:twis-cuts-down}). This is just a coincidence for this system. In general the representatives of the polygon invariant with $\kappa_1=0$ and $\kappa_2=0$ can be distinct.
\end{remark}

\begin{remark}
If there is only one focus-focus point the situation is much easier, see Alonso \& Dullin \& Hohloch \cite{ADH2}. There the twisting index is deduced by comparing \cite[Figure 13]{ADH2} with \cite[Figure 14]{ADH2}, which correspond to our Figures~\ref{fig:twis-cuts-down} and~\ref{fig:twisComp}, respectively.
\end{remark}

\begin{proof}[Proof of Theorem~\ref{thm:twist-intro}]
Let $s_-,s_+$ be the lower and upper bounds for the range of parameters $s$ for which the system has two focus-focus points, as given in Proposition~\ref{prop:nff}.
Let $\rho_2 = \rho_2(s)$ be as given in Equation~\eqref{eqn:rho} and let $s\in \,\,]s_-,s_+[$.
Consider the first order factor of $l$ in the Taylor series $S_r^\infty(l,j)$ from Theorem~\ref{thm:Taylor-intro}, which is given by
\begin{equation}\label{eqn:arctan}
\arctan \left( \dfrac{6-9s}{\ra} \right)\quad  \text{for }r=1,\qquad\quad \arctan \left( \dfrac{6-9s}{\ra} \right) + \pi \quad \text{for }r=2.
\end{equation}
No matter the value of $s$, the values of the terms in Equation~\eqref{eqn:arctan} stay in the interval $]-\tfrac{\pi}{2},\tfrac{3\pi}{2}[$. Since the twisting index only changes when this term surpasses either $-\tfrac{\pi}{2}$ or $\tfrac{3\pi}{2}$, we conclude that the twisting index invariant at both focus-focus points $m_1$ and $m_2$ is independent of the value of $s$.
For the remainder of the proof we thus suppress all dependencies on $s$ in the notation.

To obtain the twisting index invariant we need to compare the privileged momentum maps $\nu_1$ and $\nu_2$ associated to each of the focus-focus singularities with the momentum map $\mu_\Delta$ associated to a polygon $\De$ of the polygon invariant from Theorem \ref{thm:poly}.

Following Section \ref{sss:polygon}, we make the choice of signs $\varepsilon = (\varepsilon_1, \varepsilon_2) = (-1,-1)$ and we consider the half-lines
$$
b_{\lam_1}^{\epsilon_1} = \{ (0,y) \;|\; y<0\}\quad \mbox{and} \quad b_{\lam_2}^{\epsilon_2} = \{ (2,y) \;|\; y<0 \} 
$$ where $\lambda_1=0$, $\lambda_2=2$. For each $r\in\{1,2\}$, we have seen in Section \ref{sss:taylor} that we can find a local neighborhood $U_r \subset M$ of the focus-focus singularity $m_r$, a neighborhood $W_r := F^{-1} F(U)$ of the corresponding focus-focus fiber and $\tilde{W}_r = F^{-1}(F(W_r) \backslash b_{\lambda_r}^{\varepsilon_r})$. We also have a map $\Phi_r = \varrho_r \circ F$ and we use a local coordinate $z = l + ij \in \Phi_r(W_r) \subset \C$. In Section \ref{sss:twisting} we have defined the privileged momentum map $\nu_r = (L,\Xi_r)$, where $\Xi_r = \xi_r \circ \Phi : W_r \to \R$ and the function $\xi_r(z)$ is understood as a preferred local action (cf.\ Remark \ref{re:xi_as_action}). In Lemma \ref{lem:Spref} we have defined its corresponding desingularised action $\hat{S}$, which is related to $\xi_r$ by
\begin{equation}\label{eqn:proof-xi}
2\pi \xi_r(z)  = \hat{S}_r(z) - \textup{Im}(z \log z -z) + 2\pi \xi_r(0)
\end{equation}
for $z\in\Phi_r(W_r)$. In Theorem \ref{thm:Taylor-intro} we have computed the Taylor series invariant up to second order, which consists of the Taylor series of the functions $\hat{S}_r(z)$, $r\in\{1,2\}$. We will use this result to approximate the functions $\xi_r(z)$.

\begin{figure}[ht]
\centering
    \begin{subfigure}[b]{0.48\textwidth}
    	\centering
        \includegraphics[width=0.9\textwidth]{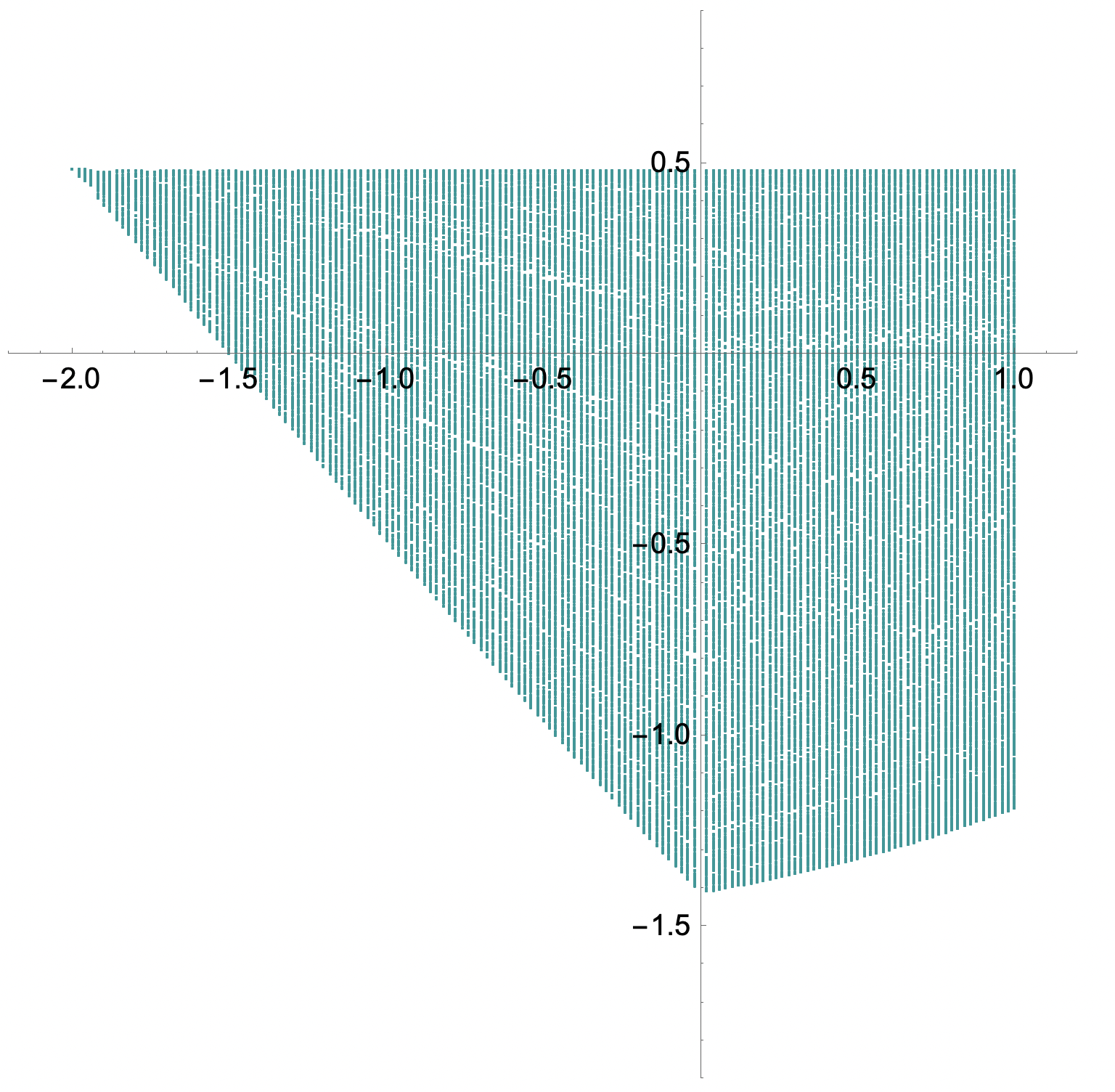}
        \caption{\small{Singularity $\mathcal{N} \times \mathcal{S}$}}
    \end{subfigure}
    \begin{subfigure}[b]{0.48\textwidth}
    	\centering
        \includegraphics[width=0.9\textwidth]{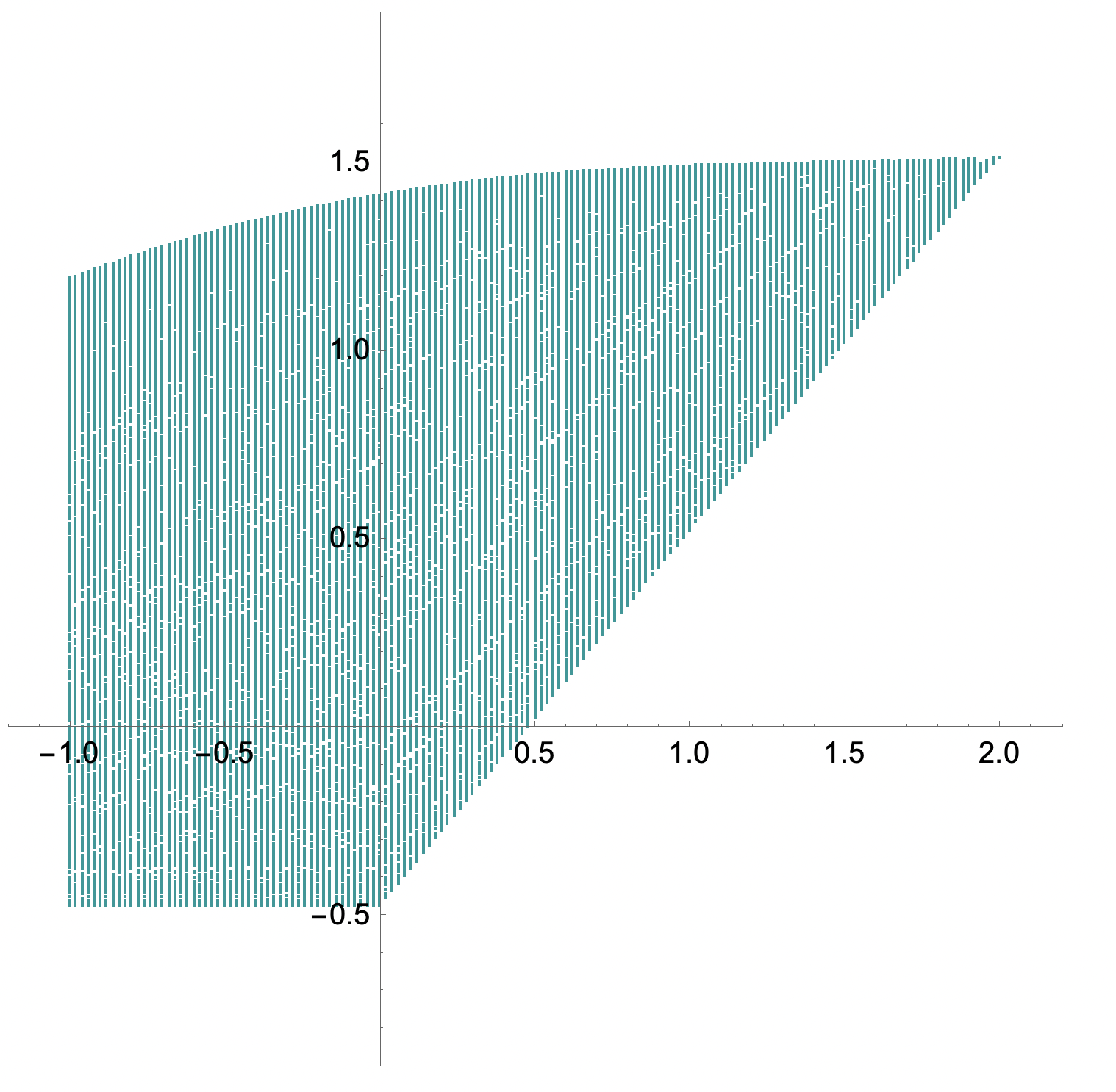}
        \caption{\small{Singularity $\mathcal{S} \times \mathcal{N}$}}
    \end{subfigure} 
\caption{The image of the extensions of the local privileged momentum maps $\nu_1 = (L,\Xi_1)$ and $\nu_2=(L,\Xi_2)$ are shown for the focus-focus singularity $\mathcal{N} \times \mathcal{S}$ in subfigure (a) and $\mathcal{S} \times \mathcal{N}$ in subfigure (b).
Here we have taken the parameter value $s=\tfrac{1}{2}$. 
We used Mathematica to find the images of $\nu_1$ and $\nu_2$ by applying Equations~\eqref{eq:LH} and~\eqref{eq:approxtwis} to 386,560 points
$(q_1,p_1,q_2,p_2)$ in the set given by $-\pi < q_1 <\pi$ and $\,-3 < p_1 < 0$ and $\,0 < q_2 < \pi$ and $\,0 < p_2 <4$.}
\label{fig:twisComp}
\end{figure}

In Remark \ref{rmk:varrho_is_J}, we have seen that $\varrho_r(l,h) = (l,J_r(l,h)),$ where $J_r(l,h)$ is the imaginary action defined in \eqref{eqn:im-action}.
For $x\in W_r$, we thus have that
\begin{equation}\label{eq:approxtwis}
\Xi_r (x) = \xi_r \circ \varrho_r \circ F(x) = \xi_r(L(x), J_r(L(x),H(x))).
\end{equation} 
Note that $J_r$ is smooth around the focus-focus value and can thus be expanded around it, as in the proof of Lemma \ref{lem:birkhoff}.
Furthermore, $\xi_r$ is determined by $\hat{S}$ via Equation~\eqref{eqn:proof-xi}, and $\hat{S}$ can be approximated by the finite expansion from Theorem \ref{thm:Taylor-intro}.
Putting this together, from Equation~\eqref{eq:approxtwis} and Theorem \ref{thm:Taylor-intro} we have obtained an approximation for $\Xi_r$.

We take $W_1$ to be the focus-focus point $m_1$ and all regular points in the range $-3<L<1$, and for $W_2$ we take $m_2$ and all regular points in the range $-1<L<3$.
Using the approximation of $\Xi_r$ from Equation~\eqref{eq:approxtwis}, we now plot the 
image of the map $\nu_r = (L,\Xi_r)$
using Mathematica, which yields the image shown in Figure~\ref{fig:twisComp}. For simplicity in the computations we have taken $s = \frac{1}{2}$, since as discussed above the resulting value of the twisting index will be independent of $s$. 

Necessarily, for $r\in\{1,2\}$, the map $\nu_r$ is equal in a neighborhood of $c_r$ to one of the possible generalized toric momentum maps $\mu_\Delta$, for some choice of $\De$ with downwards cuts. All such choices are related by a global application of the linear map $T$; several such polygons are shown in Figure~\ref{fig:polygons-with-kappa-1}.

We now compare the polygons from Figure~\ref{fig:polygons-with-kappa-1} with the images of the approximations of
$
 \nu_r = (L,\Xi_r)
$
shown in Figure~\ref{fig:twisComp}.
We see that the order of our approximation is sufficiently high to clearly identify that the images from Figure~\ref{fig:twisComp} are most similar to the polygon singled out in Figure~\ref{fig:twis-cuts-down}, since the other options (such as those shown in Figure~\ref{fig:polygons-with-kappa-1}) have a sufficiently different shape from the one shown in Figure~\ref{fig:twis-cuts-down}. 
Thus, the indices associated to the polygon in Figure~\ref{fig:twis-cuts-down} are $(\kappa_1,\kappa_2)=(0,0)$. This determines the twisting index labels for all representatives of the polygon by applying the group action \eqref{eq:twisact}.
\end{proof}

\newpage

\appendix 
\section{Values of constants}
\label{appendix:d1d2}


The values of the terms $d_1$ and $d_2$ which appear in Equation~\eqref{eqn:dSdj} are given by:
\begin{equation*}
\begin{aligned}
d_1 &= -\left(16 - 96 (1 + \rc) {s} + 8 (139 + 48 \rc) {s}^2 - 
    8 (587 + 183 \rc) {s}^3 \right. \\& \quad \left. + 3 (3515 + 1032 \rc) {s}^4 - 
    64 (241 + 45 \rc) {s}^5 + 96 (157 + 10 \rc) {s}^6 \right. \\& \quad \left.- 8704 {s}^7 + 
    2176 {s}^8\right) \left(-6 j l \ra (-512 + 6912 {s} - 32256 {s}^2 + 29952 {s}^3 \right. \\& \quad \left.+ 
      651072 {s}^4 - 5470176 {s}^5 + 25000480 {s}^6 - 78708528 {s}^7 + 
      181951998 {s}^8 \right. \\& \quad \left.- 308836805 {s}^9 + 366523680 {s}^{10} - 
      264177144 {s}^{11} + 45690784 {s}^{12} + 122026416 {s}^{13} \right. \\& \quad \left.- 
      141477888 {s}^{14} + 75192576 s2^{15} - 20766720 {s}^{16} + 
      2396160 {s}^{17}) \right. \\& \quad \left.+ 
   l^2 (5120 - 76800 {s} + 536832 {s}^2 - 2331648 {s}^3 + 9224832 {s}^4 - 
      43900032 {s}^5 \right. \\& \quad \left.+ 196060832 {s}^6 - 664211520 {s}^7 + 
      1701591876 {s}^8 - 3591176044 {s}^9 \right. \\& \quad \left.+ 6833670885 {s}^{10} - 
      11790448080 {s}^{11} + 17057911016 {s}^{12} - 19014926976 {s}^{13}\right. \\& \quad \left. + 
      15302126928 {s}^{14} - 8262067200 {s}^{15} + 2541006336 {s}^{16} - 
      132489216 {s}^{17} \right. \\& \quad \left.- 200669184 {s}^{18} + 66846720 {s}^{19} - 
      6684672 {s}^{20}) \right. \\& \quad \left.+ 
   j^2 (-5120 + 76800 {s} + 118528 {s}^2 - 6843392 {s}^3 + 
      69320064 {s}^4 - 441754496 {s}^5 \right. \\& \quad \left.+ 1994226016 {s}^6 - 
      6694907840 {s}^7 + 17301290172 {s}^8 - 35191498900 {s}^9 \right. \\& \quad \left.+ 
      57016666395 {s}^{10} - 74143143472 {s}^{11} + 78236327192 {s}^{12} - 
      68381724032 {s}^{13} \right. \\& \quad \left.+ 50940505264 {s}^{14} - 32953613312 {s}^{15} + 
      18193326592 {s}^{16} - 8071077888 {s}^{17} \right. \\& \quad \left.+ 2625804288 {s}^{18} - 
      547880960 {s}^{19} + 54788096 {s}^{20})\right)\\[0.2cm]      
d_2 &= 256 \ra^6 \rc^4 (16 - 96 (1 + \rc) {s} + 8 (139 + 48 \rc) {s}^2 - 
   8 (587 + 183 \rc) {s}^3 \\ & \quad + 3 (3515 + 1032 \rc) {s}^4 - 
   64 (241 + 45 \rc) {s}^5 + 96 (157 + 10 \rc) {s}^6 \\& \quad- 8704 {s}^7 + 
   2176 {s}^8).
\end{aligned}
\label{d1d2}
\end{equation*}
Note that $\frac{d_1}{d_2} = d^s_3j^2 + d^s_4 jl + d^s_5l^2$ for some $d^s_3, d^s_4, d^s_5\in\R$ depending on the parameter $s$, since $\rho_1$ and $\rho_2$ are also determined by $s$, see Equation~\eqref{eqn:rho}.

\bibliography{twistingindexref}  
\bibliographystyle{alpha}

  \noindent
  \textbf{Jaume Alonso}\\
  Technische Universität Berlin\\
  Institute of Mathematics\\
  Str.\ des 17. Juni 136\\
  D-10623 Berlin, Germany\\
  {\em E\--mail}: \texttt{alonso@math.tu-berlin.de}\\

  \noindent
  \textbf{Sonja Hohloch}\\
  University of Antwerp\\
  Department of Mathematics\\
  Middelheimlaan 1\\
  B-2020 Antwerpen, Belgium\\
  {\em E\--mail}: \texttt{sonja.hohloch@uantwerpen.be}\\

  \noindent
  \textbf{Joseph Palmer}\\
  University of Illinois
  at Urbana-Champaign\\
  Department of Mathematics\\
  1409 W. Green St\\
  Urbana, IL 61801 USA\\
  {\em E\--mail}: \texttt{jpalmer5@illinois.edu}\\

\end{document}